\DeclareCiteCommand{\parencite}[\mkbibparens]
  {\usebibmacro{prenote}}
  {\usebibmacro{citeindex}%
    \printtext[bibhyperref]{\usebibmacro{cite}}}
  {\multicitedelim}
  {\usebibmacro{postnote}}
\DeclareCiteCommand*{\parencite}[\mkbibparens]
  {\usebibmacro{prenote}}
  {\usebibmacro{citeindex}%
    \printtext[bibhyperref]{\usebibmacro{citeyear}}}
  {\multicitedelim}
  {\usebibmacro{postnote}}
\DeclareCiteCommand{\footcite}[\mkbibfootnote]
  {\usebibmacro{prenote}}
  {\usebibmacro{citeindex}%
  \printtext[bibhyperref]{ \usebibmacro{cite}}}
  {\multicitedelim}
  {\usebibmacro{postnote}}
\DeclareCiteCommand{\footcitetext}[\mkbibfootnotetext]
  {\usebibmacro{prenote}}
  {\usebibmacro{citeindex}%
   \printtext[bibhyperref]{\usebibmacro{cite}}}
  {\multicitedelim}
  {\usebibmacro{postnote}}
\global\boolfalse{cbx:parens}}
\DeclareMathOperator{\segaux}{seg}                      
\newcommand*{\conn}{\nabla}                             
\newcommand*{\lie}{\mathcal{L}}                         
\newcommand*{\connflat}{\nabla^{\text{flat}}}           
\newcommand*{\gm}{\textsl{g}}
\newcommand*{\paral}[1]{#1^{\dag}}                      
\newcommand*{\normal}[1]{#1^{\perp}}                    
\newcommand*{\dgamma}{\dot{\gamma}}                     
\DeclareMathOperator{\sn}{sn}                           
\DeclareMathOperator{\ct}{ct}                           
\newcommand*{\tseg}{\overline{U}_p}                     
\newcommand*{\seg}{U_p}                                 
\newcommand*{\segall}{\segaux(p)}                       
\DeclareMathOperator{\conjloc}{conj}                    
\DeclareMathOperator{\cut}{cut}                         
\DeclareMathOperator{\inj}{r_{inj}}                     
\DeclareMathOperator{\diam}{diam}                       
\newcommand\glie{\mathfrak{g}}                          
\newcommand\hlie{\mathfrak{h}}                          
\newcommand\mlie{\mathfrak{m}}                          
\newcommand\transaux{\intercal}                         
\newcommand\trans[1]{#1^\transaux}                      
\renewcommand*\SS{\mathbb{S}}                           
\newcommand*\rgd{\textsc{rgd}}
\newcommand*\PD{\textsc{PD}}
\DeclareMathOperator{\SOaux}{SO}                         
\DeclareMathOperator{\soaux}{\mathfrak{so}}              
\DeclareMathOperator{\Skewaux}{Skew}                     
\DeclareMathOperator{\Graux}{Gr}                         
\NewDocumentCommand{\SO}{ m }{ \SOaux\pa{#1} }
\NewDocumentCommand{\so}{ m }{ \soaux\pa{#1} }
\NewDocumentCommand{\Skew}{ m }{ \Skewaux\pa{#1} }
\NewDocumentCommand{\M}{ >{\SplitArgument{1}{,}}m}{%
    \RR^{\prodaux #1}%
}
\NewDocumentCommand{\Gr}{ >{\SplitArgument{1}{,}}m}{%
    \Graux\pa{\commasaux #1}%
}
\NewDocumentCommand{\commasaux}{ m m }{%
    \IfNoValueTF{#2}{ #1 }{ #1, #2 }%
}
\NewDocumentCommand{\prodaux}{ m m }{%
    \IfNoValueTF{#2}{ #1 \times #1 }{ #1 \times #2 }%
}
\NewDocumentCommand{\liebrack}{s O{} >{\SplitArgument{1}{,}}m}{%
    \IfBooleanTF{#1}{\liebrackaux*#3}{\liebrackaux[#2]#3}%
}
\DeclarePairedDelimiterX{\liebrackaux}[2]{\lbrack}{\rbrack}{#1, #2}
\title{Curvature-Dependant Global Convergence Rates for Optimization on Manifolds of Bounded Geometry}
\author{Mario Lezcano-Casado\thanks{Mathematical Institute, University of Oxford, United Kingdom.
\newline \hspace*{4.4ex}Correspondence: mario.lezcanocasado@maths.ox.ac.uk}}
\date{}
\begin{document}
\maketitle
\begin{abstract}
    We give curvature-dependant convergence rates for the optimization of weakly convex functions defined on a manifold of $1$-bounded geometry via Riemannian gradient descent and via the \emph{dynamic trivialization algorithm}. In order to do this, we give a tighter bound on the norm of the Hessian of the Riemannian exponential than the previously known. We compute these bounds explicitly for some manifolds commonly used in the optimization literature such as the special orthogonal group and the real Grassmannian. Along the way, we present self-contained proofs of fully general bounds on the norm of the differential of the exponential map and certain cosine inequalities on manifolds, which are commonly used in optimization on manifolds.
\end{abstract}

\iftoggle{arxiv}{%
\section{Introduction}
We are interested in approximating critical points of a---possibly non-convex---function defined on a manifold
\[
    \min_{x \in M} f(x).
\]
In particular, we are interested in giving global convergence bounds for such problems. These problems form a particularly well-behaved subset of the field of constrained optimization. Examples of the importance of this family of problems are ubiquitous in engineering, statistics, and machine learning. For example, they have found applications in image processing with the Grassmannian for tracking subspaces; analysis of time series in deep learning with the orthogonal group to avoid vanishing and exploding gradient problems; Bayesian statistics and machine learning with the manifold of positive definite (\PD) matrices in kernel methods and metric learning; the space of low-rank and fixed-rank matrices for low-rank models; and the hyperbolic space for word embeddings, among many others.

The approach that we investigate in this paper is that of using the exponential map to pullback the problem from the whole manifold to a fixed tangent space, converting the problem into an unconstrained one over $T_pM \iso \RR^n$
\begin{equation}\label{eq:problem}
    \min_{v \in T_pM} f(\exp_p(v)).
\end{equation}
For this to be a sensible method, the exponential map has to be surjective, so we will assume $(M, \gm)$ to be \textbf{connected and complete} throughout the paper.

\paragraph{General pullbacks in constrained optimization.}
The idea of pulling back a problem from a space with a complex geometry to a simpler one via a particular family of maps lies at the core of the area of constrained optimization. Mirror descent~\parencite{nemirovsky1983problem} and Lagrange multipliers are examples of methods that allow simplifying a constrained optimization problem into one in a simpler space. It is also a recurrent theme in the literature on optimization on manifolds. We have the Burer-Monteiro method~\parencite{burer2003nonlinear,burer2005local} which parametrizes the space of low-rank matrices via the multiplication of two tall matrices. Other approaches via different factorizations have also been explored, for example via an SVD factorization~\parencite{vandereycken2013low} or other factorizations like the polar factorization~\parencite{mishra2014fixed}. Similar parametrizations have been explored for the algebraic variety of matrices of rank at most $k$---low-rank optimization---and manifolds such as positive semidefinite matrices of a fixed rank~\parencite{vandereycken2013riemannian,massart2020quotient}. The idea behind these algorithms is that of parametrizing a space in terms of simpler ones, effectively pulling back the problem to an easier one and then mapping back the solution of the simpler problem to the complex space via the parametrization map.

\paragraph{Pullbacks along the exponential in numerical analysis.}
Instances of~\eqref{eq:problem} can be found all throughout the area of numerical analysis. For example, in the study of the center of mass in the manifold of \PD{} matrices~\parencite{arsigny2006log,arsigny2006geometric}; continuous dynamics on Lie groups, and their discretizations~\parencite{magnus1954exponential,iserles1999solution,iserles2000lie}; optimization on compact Lie groups~\parencite{lezcano2019cheap}; and probabilistic methods on manifolds for their use in machine learning~\parencite{farlosi2019reparametrizing}. More recently, these ideas were unified into the framework of \emph{static trivializations}~\parencite{lezcano2019trivializations}.

The main idea behind pulling back an optimization problem from a manifold to a flat space is that of heavily simplifying the problem at hand, going from a manifold which might have a complex topology and geometry, to one that is flat and has trivial topology. Of course, that comes at a cost, as this change of topology makes the new function $f \circ \exp_p$ have more critical points. Luckily, for any differentiable manifold and any smooth metric, this set of problematic points, called the \emph{conjugate locus}, has measure zero by a theorem of Sard~\parencite{sard1965hausdorff}. Since the conjugate locus is a subset of the \emph{cut locus}, it is far from the point $p$. This is because the cut locus can vaguely be regarded as \emph{the set that is opposite to the point $p$}. For example, on a sphere, the cut locus of a point $p$ is its antipode and on a cylinder, the cut locus of a point is the whole line opposite to that point. As such, the cut locus is, in some sense, \emph{as far from $p$ as possible}, and so is the conjugate locus. At the expense of this technicality, one may heavily simplify the problem of optimizing a function on a space with a non-trivial topology to a Euclidean problem. We will expand on this technical point in~\Cref{sec:background}.

\paragraph{Pullbacks along the exponential map: Trivializations.}
If we pullback the problem to the tangent space at each iteration of the algorithm along a retraction, we recover Riemannian Gradient Descent (\rgd) along that retraction~\parencite{boumal2019global}. It was then noted in~\parencite{lezcano2019trivializations} that one may change the point $p$ in~\eqref{eq:problem} to avoid converging to a point in the conjugate locus of $p$. In fact, one may change the point according to an arbitrary stopping rule giving the following meta-algorithm:

\begin{enumerate}
    \item Compute gradient steps of the function $f\circ \exp_{p_i}$ to obtain iterates $v_{i,k}\in T_{p_i}M$ for $k = 1, 2, \dots$
    \item If we are close to a point in the conjugate locus of $p_i$, we set $p_{i+1} = \exp_{p_i}(v_{i,k})$ and repeat
\end{enumerate}

This translates to~\Cref{alg:dyn_triv} given a boolean stopping rule $\code{stop}$ and it is called the \emph{dynamic trivializations framework}~\parencite{lezcano2019trivializations}.
\begin{algorithm}[!htp]
    \caption{Dynamic trivialization framework}
    \label{alg:dyn_triv}
    \begin{algorithmic}[1]
            \Require A starting point $p_0 \in M$, a boolean statement $\code{stop}$
            \For {$i = 0, \ldots$}
                \State $v_{i, 0} = 0$
                \For {$k = 1, \dots$}
                    \State $v_{i, k} = v_{i, k-1} - \eta_{i,k-1}\grad \pa{f \circ \exp_{p_i}} \pa{v_{i,k-1}}$\label{line:update}\Comment{Optimize on $T_{p_i}M$}
                    \If {\code{stop}}\Comment{Stopping condition}
                        \Break
                    \EndIf
                \EndFor
                \State $p_{i+1} = \exp_{p_i}(v_{i,k})$ \Comment{Update pullback point}
            \EndFor
    \end{algorithmic}
\end{algorithm}

Two possible stopping rules stand out over the others. If we let $\code{stop} \equiv \code{False}$, then we recover the \emph{static-trivialization framework} of pulling back the function to a fixed tangent space as in~\eqref{eq:problem}. On the other hand, if $\code{stop} \equiv \code{True}$, then the algorithm is exactly Riemannian gradient descent, as mentioned before. The stopping rule of being too close to the conjugate locus could be then written as
\[
    \code{stop} \equiv \frac{\norm{\grad\pa{f \circ \exp_{p_i}}\pa{v_{i, k}}}}{\norm{\grad f\pa{x_{i,k}}}} < \epsilon.
\]

The ideas of pulling back the problem at the current point of the optimization and then taking a few steps on the tangent space have been recently explored in~\parencite{criscitiello2019efficiently,sun2019escaping} in the context of escaping saddle points for optimization on manifolds.

    \paragraph{Retractions with bounded second and third order derivatives.}
    To be be able to prove convergence of the dynamic trivialization framework, we will give novel bounds on the norm of the Hessian of the exponential map in terms of the curvature of the manifold. These bounds are one example of second order bounds for a large family of retractions. These bounds lie at the heart of the convergence results of Riemannian methods via retractions, where it is often referred to as the \emph{$L$-smoothness of the retraction}. Examples of work under this assumptions are adaptive methods on matrix manifolds~\parencite{kasai2019adaptive}, quasi-Newton methods on manifolds~\parencite{wen2015broyden}, stochastic methods with variance reduction~\parencite{kasai2018riemannian}, and general convergence of Newton methods in Riemannian manifolds~\parencite{ferreira2002kantorovich}, among many others. The bounds developed in this paper give exact rates of growth for the norm of the Hessian of the retraction in terms of the curvature of the manifold when the retraction is the exponential map, which falls exactly in the framework of these papers and many others.

    Third order and higher order bounds are used when trying to converge to second order optima and escaping saddle points~\parencite{criscitiello2019efficiently} and when adapting Newton methods to manifolds~\parencite{agarwal2020arc}, and other higher order methods. The computations of higher order bounds is analogous to that of second order bounds. In this paper, we present all the necessary techniques to give $n$-th order bounds for a manifold of $n$-bounded geometry (see~\Cref{def:bounded_geometry} for a definition), although we do not perform these computations explicitly. Bounds of this flavour were first developed in~\parencite{eichhorn1991boundedness}, where the rate of growth of the $n$-th derivative of the Christoffel symbols in normal coordinates is estimated.

    \paragraph{General assumptions.}
    In this paper, we look at the problem of proving convergence for the family of dynamic trivialization methods all at once. To do so, it is enough to control the iterates of the (Euclidean) gradient descent in the context of problem~\eqref{eq:problem}. If the bounds do not have a dependency on $p$, then they will provide guarantees for~\Cref{alg:dyn_triv}. This is the road that we will take in this paper.

    For some manifolds, such as the hyperbolic space or the space of symmetric positive definite matrices (and more generally on Hadamard spaces), one may find (geodesically) convex functions which can be optimized efficiently~\parencite{bacak2014convex}. On the other hand, when the manifold is compact, as is the case of the sphere, or when dealing with orthogonality constraints~\parencite{edelman1998geometry}, there exists no convex function besides the constant ones, so we inevitably have to deal with non-convex problems.

    To this end, we will consider a function $f$ which is \textbf{$\alpha$-weakly convex} (\ie, with bounded Hessian in operator norm) and we ask ourselves whether $f \circ \exp_p$ is $\alpha'$-weakly convex for any $\alpha'$. The answer to this problem will be: Not in general. For example, for the hyperbolic space, the exponential map grows as $\cosh(t)$, which has unbounded first and second derivatives. For this reason, we will have to content ourselves with proving tight bounds on the growth of the Hessian of $f \circ \exp_p$ on a neighbourhood of a given radius.

    When it comes to the assumptions on the manifold, we will look at the family of \textbf{manifolds of bounded first order geometry}.
    \begin{definition}[Bounded first order geometry]
        We say that a Riemannian manifold $(M, \gm)$ has $(\delta, \Delta, \Lambda)$-bounded geometry if it has uniformly bounded injectivity radius $\inj > 0$ and we have bounds on the sectional curvature and the derivative of the curvature tensor $R$
        \[
            \delta \leq \sec \leq \Delta \qquad \norm{\conn R} < \Lambda.
        \]
    \end{definition}
    These manifolds have found many uses in the area of PDEs on manifolds and geometric analysis. In particular, any compact manifold with any metric is of bounded geometry. On the other hand, this family does not impose topological restrictions on $M$ as any differentiable manifold admits a complete metric of bounded geometry~\parencite{greene1978complete}.

    For some general introduction to this family of manifolds see the thesis~\parencite{eldering2012persistence}.

    \subsection{Contributions}
    In the paper, we will implement the following strategy:
    \begin{enumerate}
        \item Give first and second order bounds for $\exp_p$ that depend on the curvature of the manifold
        \item Use these bounds to estimate the weak convexity of $f \circ \exp_p$ on a bounded domain
        \item Use these weak convexity bounds to prove the convergence of the dynamic trivialization framework
    \end{enumerate}

    First order bounds for the Riemannian exponential are well known in geometry under the name of \emph{Rauch's theorem}. We will present a self-contained proof of them here in full generality in~\Cref{sec:first_order_bounds} as it is not easily found in the literature. We also take the chance to prove a law of cosines in manifolds that follows from this in~\Cref{sec:law_of_cosines}. Particular cases of this law of cosines have been used throughout the optimization literature, but it is not easy to find a self-contained proof of the general case.

    Second order bounds for the exponential are considerably more difficult to prove. There have been some recent attempts at bounding these quantities in the context of optimization on manifolds~\parencite{sun2019escaping}. In particular, concurrent to this work, some new bounds were given in~\parencite{chriscitiello2020accelerated}. The bounds that we present in this work are tighter than those in these papers. Historically, Kaul was the first to prove second order bounds in~\parencite{kaul1976schranken}. We borrow some ideas from this paper, while greatly improving their bounds. In particular, these original bounds were given in terms of a solution of a differential equation that cannot be integrated. The bounds that we give in~\Cref{sec:second_order_bounds} are explicit in terms of trigonometric functions.
    \begin{theorem}[Bounds on the Full Hessian]
        Let $(M, \gm)$ be a Riemannian manifold with $(\delta, \Delta, \Lambda)$-bounded geometry. For a geodesic $\deffun{\gamma : [0,r] -> M;}$ with initial unit vector $v$, $r < \pi_{\frac{\Delta + \delta}{2}}$, and any two vectors $w_1, w_2 \in T_pM$, we have that
        \[
            \norm{\pa{\conn\dif \exp_p}_{rv}\pa{w_1,w_2}} \leq
                \lfrac{8}{3r^2}\sn_\delta\pa[\big]{\lfrac{r}{2}}^2
                \pa{\Lambda\sn_\delta\pa[\big]{\lfrac{r}{2}}^2
                +2\max\set{\abs{\Delta}, \abs{\delta}}\sn_\delta\pa{r}}\norm{w_1}\norm{w_2}.
        \]
        where
        \[
            \sn_\delta(t) \defi
            \begin{cases}
                \frac{\sin (\sqrt{\delta}t)}{\sqrt{\delta}}    \qquad &\text{if } \delta > 0\\
                t                                              \qquad &\text{if } \delta = 0\\
                \frac{\sinh (\sqrt{-\delta}t)}{\sqrt{-\delta}} \qquad &\text{if } \delta < 0
            \end{cases}
        \]
        and $\pi_\kappa \in (0, \infty]$ is the first positive zero of $\sn_\kappa$.
        Furthermore, the radius $\pi_{\frac{\Delta + \delta}{2}}$ is tight for $\SO{n}$ with a bi-invariant metric.
    \end{theorem}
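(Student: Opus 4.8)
The plan is to realize $\pa{\conn\dif\exp_p}_{rv}\pa{w_1,w_2}$ as the endpoint value of a vector field along $\gamma$ solving an \emph{inhomogeneous} Jacobi equation, and then to control that field by comparison with the constant-curvature model, following the scheme of \parencite{kaul1976schranken} but keeping every constant sharp and integrating the comparison in closed form. First I would set up a two-parameter variation through geodesics: put $f(s_1,s_2,t) = \exp_p\pa{t\pa{rv + s_1 w_1 + s_2 w_2}}$, so that $t\mapsto f(s_1,s_2,t)$ is a geodesic for each $(s_1,s_2)$, the curve $f(0,0,\cdot)$ reparametrizes $\gamma$, and $f(\cdot,\cdot,1) = \exp_p$ near $rv$. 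Writing $S_i = \partial_{s_i}f|_0$ and $Y = \conn_{\partial_{s_2}}\partial_{s_1}f|_0$ as fields along $\gamma$, one checks that $S_i$ is the Jacobi field with $S_i(0) = 0$ and $\conn_t S_i(0) = w_i$, that $S_i(1) = \pa{\dif\exp_p}_{rv}(w_i)$, and that --- since $T_pM$ carries the flat connection --- $Y(1) = \pa{\conn\dif\exp_p}_{rv}(w_1,w_2)$ with $Y(0) = \conn_t Y(0) = 0$. As the quantity to be bounded is bilinear and symmetric, I would reduce to $w_1, w_2 \perp v$; then $S_1, S_2$ stay normal to $\gamma$, the component $\langle Y, \dgamma\rangle$ can be written out explicitly in terms of $\langle S_1, S_2'\rangle$ and $\langle w_1,w_2\rangle$ and bounded directly, and the real content is the normal component of $Y$.

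Next I would derive the equation for $Y$. Differentiating the geodesic equation $\conn_{\dgamma}\dgamma = 0$ twice and commuting covariant derivatives through the curvature tensor yields an equation of the form
\[
    \conn_t^2 Y + R(Y,\dgamma)\dgamma = F,
\]
where $F$ is a universal expression in $R$, $\conn R$, the $S_i$, and the $S_i'$; the second Bianchi identity is used to merge the $\conn R$ contributions, which is where the constants begin to matter. Bounding $\abs{R}$ and $\abs{\conn R}$ by $\max\set{\abs{\Delta},\abs{\delta}}$ and $\Lambda$, and $\abs{S_i}$, $\abs{S_i'}$ by the first-order (Rauch) estimates of \Cref{sec:first_order_bounds} --- which give $\abs{S_i(t)} \le \tfrac1r\sn_\delta(rt)\abs{w_i}$ together with a companion bound on $\abs{S_i'(t)}$ --- one obtains a pointwise estimate of the shape
\[
    \abs{F(t)} \le \pa{c_1\Lambda\,\sn_\delta(rt)^2 + c_2\max\set{\abs{\Delta},\abs{\delta}}\,\sn_\delta(rt)}\abs{w_1}\abs{w_2}.
\]

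Finally I would estimate $\abs{Y(1)}$ from this inhomogeneous equation by a Jacobi comparison. Representing $Y$ through the Jacobi propagator of $\gamma$ and bounding the operator norm of that propagator by the constant-curvature model reduces the task to scalar integrals against $\sn_\delta$, which collapse to the stated closed form by the two elementary identities
\[
    \int_0^r \sn_\delta(\sigma)\,d\sigma = 2\sn_\delta\pa{\tfrac r2}^2
    \qquad\text{and}\qquad
    \sn_\delta(a)\,\sn_\delta(b) \le \sn_\delta\pa{\tfrac{a+b}{2}}^2;
\]
for instance $\int_0^r \sn_\delta(r-\sigma)\sn_\delta(\sigma)^2\,d\sigma \le \tfrac43\sn_\delta\pa{\tfrac r2}^4$, which is how the constant $\tfrac83$ appears. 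The restriction $r < \pi_{\frac{\Delta+\delta}{2}}$ is exactly the range in which this comparison is valid, and its optimality is verified by a direct computation on $\SO n$ with a bi-invariant metric, where geodesics are one-parameter subgroups and $R$, the Jacobi fields, and both sides of the inequality are explicit in terms of $\operatorname{ad}$.

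I expect the comparison step to be the main obstacle: one must run the Jacobi comparison for the \emph{inhomogeneous} equation carefully enough to produce an explicitly integrable bound --- rather than the un-integrable ODE solution of \parencite{kaul1976schranken} --- while at the same time extracting the sharp radius $\pi_{\frac{\Delta+\delta}{2}}$. A secondary difficulty is the bookkeeping in the source term $F$: organizing its many curvature terms, via the Bianchi identity and the tangential/normal splitting, so that the coefficients in front of $\Lambda$ and $\max\set{\abs{\Delta},\abs{\delta}}$ come out as the stated ones.
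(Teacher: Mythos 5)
Your overall architecture matches the paper's proof step for step: the two-parameter variation $\exp_p\pa{t\pa{v+s_1w_1+s_2w_2}}$, the inhomogeneous Jacobi equation with zero initial data satisfied by the mixed second variation field, its identification at the endpoint with $\pa{\conn\dif\exp_p}_{rv}\pa{w_1,w_2}$, the estimate of the source term via Berger-type curvature inequalities together with Rauch's bounds on $J_i$ and $\dot J_i$, and the closed-form integration — your two integral identities are exactly how the constant $\lfrac83$ and the factors $\sn_\delta\pa{\lfrac r2}^2$ arise in the paper. (The paper reduces to the diagonal of the symmetric bilinear form by polarization rather than directly to $w_1,w_2\perp v$, and packages the $\conn R$ contribution as the single bound of \Cref{def:weird_bounded_geometry}, but these are cosmetic differences.)

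The gap sits precisely in the step you flag as the main obstacle, and the mechanism you propose there would not deliver the stated radius. Writing $Y(t)=\int_0^t P(t,\sigma)F(\sigma)\,\dif\sigma$ and bounding the propagator by $\norm{P(t,\sigma)}\leq\sn_\delta(t-\sigma)$ via Rauch is legitimate only while $\gamma$ has no conjugate points on $[\sigma,t]$, which is guaranteed only up to $\pi_\Delta$ — not up to $\pi_{\frac{\Delta+\delta}{2}}\geq\pi_\Delta$; and one cannot instead take norms directly in the vector equation, since the second derivative of $\norm{Y}$ is not controlled by $\norm{\ddot Y}$. The paper's resolution (\Cref{prop:comparison_second_order}, due to Kaul) is a scalar comparison: fix $t_0$, pair the unknown field $X$ with the unit parallel field $E$ through $X(t_0)/\norm{X(t_0)}$, use the pinching $\abs{R-R_\mu}\leq\frac{\Delta-\delta}{2}$ with $\mu=\frac{\Delta+\delta}{2}$ to obtain $\ddot f+\mu f\leq\norm{F}+\frac{\Delta-\delta}{2}\norm{X}$ for $f=\scalar{X,E}$, run the scalar Jacobi comparison against the $\mu$-model — valid on $[0,\pi_\mu)$ because that is where $\sn_\mu>0$, with no reference to conjugate points of $\gamma$ — and only then relax $\mu$ to $\delta$ to get the explicitly integrable majorant $\ddot\rho+\delta\rho=\eta$. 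That intermediate-curvature, parallel-field device is the missing idea: without it you obtain neither the radius $\pi_{\frac{\Delta+\delta}{2}}$ nor a rigorous passage from the vector-valued equation to a scalar one.
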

    We also give tighter bounds in~\Cref{thm:second_order_bounds}, although we believe that this more compact bound will be easier to handle in general applications.

    We compute explicit bounds for some manifolds which find uses in optimization in~\Cref{sec:concrete_bounds}. For example, for the special orthogonal embedded in $\RR^{n \times n}$ group we get
    \[
        \norm{\pa{\conn \dif \exp_p}_{rv}(w_1,w_2)} \leq \frac{r}{3}\norm{w_1}\norm{w_2} \qquad{r < 2\sqrt{2}\pi}
    \]
    and for the real Grassmannian with the canonical metric
    \[
        \norm{\pa{\conn \dif \exp_p}_{rv}(w_1,w_2)} \leq \frac{8r}{3}\norm{w_1}\norm{w_2} \qquad{r < \pi}.
    \]

    Once we have first and second order bounds, in~\Cref{sec:weakly_convex}, we bound the differential and the Hessian of the exponential map using the chain rule and Cauchy-Schwartz
    \[
        \norm{\Hess (f \circ \exp_p)} \leq \norm{\Hess f}\norm{\dif \exp_p}^2 + \norm{\dif f}\norm{\Hess \exp_p}
    \]
    getting the following result.
    \begin{theorem}[Weak convexity of the pullback]
        Let $(M, \gm)$ be a connected and complete Riemannian manifold and let $f$ be an $\alpha$-weakly convex function on it. Fix a point $p \in M$, and a set $\overline{\mathcal{X}} \subset T_pM$ with $\diam(\overline{\mathcal{X}}) = r < \pi_{\frac{\Delta+\delta}{2}}$ that does not intersect the conjugate locus and has at least one critical point of $f$ in it. Then the map $f \circ \exp_p$ is $\widehat{\alpha}_r$-weakly convex with constant
        \[
            \widehat{\alpha}_r = \alpha (C_{1,r} + C_{2,r})
        \]
        for
        \begin{gather*}
            C_{1,r} = \max\set[\Big]{1, \lfrac{\sn_\delta(r)^2}{r^2}} \\
            C_{2,r} =
                \lfrac{8}{3}\sn_\delta\pa[\big]{\lfrac{r}{2}}^2
                \pa{\Lambda\sn_\delta\pa[\big]{\lfrac{r}{2}}^2
                +2\max\set{\abs{\Delta}, \abs{\delta}}\sn_\delta\pa{r}}.
        \end{gather*}
    \end{theorem}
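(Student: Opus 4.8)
The plan is to bound the operator norm of $\Hess\pa{f\circ\exp_p}$ at each point of $\overline{\mathcal X}$ by combining the chain rule for second derivatives displayed just before the statement with the first- and second-order bounds on $\dif\exp_p$ from \Cref{sec:first_order_bounds} and \Cref{sec:second_order_bounds}, and with the Lipschitz bound on $\grad f$ that $\alpha$-weak convexity provides. Fixing $v\in\overline{\mathcal X}$ and setting $x=\exp_p(v)$, the chain rule yields
\[
    \norm{\Hess\pa{f\circ\exp_p}_v}\le\norm{\Hess f}\,\norm{\pa{\dif\exp_p}_v}^2+\norm{\dif f|_x}\,\norm{\pa{\conn\dif\exp_p}_v},
\]
so, since $\norm{\Hess f}\le\alpha$ is the hypothesis, it suffices to bound $\norm{\pa{\dif\exp_p}_v}$, $\norm{\pa{\conn\dif\exp_p}_v}$, and $\norm{\dif f|_x}$ uniformly over $\overline{\mathcal X}$.

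For the differential I would split $T_pM$ along $v$: by the Gauss lemma the radial direction is preserved isometrically, while on its orthogonal complement Rauch's comparison bounds the dilation by $\sn_\delta(\norm v)/\norm v$. As $\overline{\mathcal X}$ has diameter $r<\pi_{\frac{\Delta+\delta}{2}}\le\pi_\delta$ and (as in \Cref{alg:dyn_triv}) may be taken to contain the base point $0$, we have $\norm v\le r<\pi_\delta$ for every $v\in\overline{\mathcal X}$; using that $t\mapsto\sn_\delta(t)/t$ is $\le 1$ for $\delta\ge 0$ and nondecreasing for $\delta<0$ on $[0,\pi_\delta)$, this gives $\norm{\pa{\dif\exp_p}_v}^2\le\max\set{1,\sn_\delta(r)^2/r^2}=C_{1,r}$, so the first term is at most $\alpha C_{1,r}$. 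For the Hessian of the exponential, the bound on the full Hessian proved earlier applies on $\overline{\mathcal X}$ --- it misses the conjugate locus and $\norm v<\pi_{\frac{\Delta+\delta}{2}}$ --- and gives $\norm{\pa{\conn\dif\exp_p}_v}$ an explicit bound in terms of $\sn_\delta(\norm v)$ and $\sn_\delta(\norm v/2)$.

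For $\norm{\dif f|_x}$ I would exploit the critical point: if $v^\ast\in\overline{\mathcal X}$ satisfies $\grad f\pa{\exp_p(v^\ast)}=0$, then since $\norm{\Hess f}\le\alpha$ makes $\grad f$ be $\alpha$-Lipschitz along geodesics, $\norm{\dif f|_x}=\norm{\grad f(x)}\le\alpha\,d\pa{x,\exp_p(v^\ast)}$, and this distance is at most the length of the curve $s\mapsto\exp_p\pa{(1-s)v^\ast+sv}$, which by the differential bound above is at most $\sqrt{C_{1,r}}\,\norm{v-v^\ast}\le r\sqrt{C_{1,r}}$. Substituting the three estimates into the chain-rule inequality and taking the supremum over $\overline{\mathcal X}$, the product $\norm{\dif f|_x}\,\norm{\pa{\conn\dif\exp_p}_v}$ reduces --- after replacing every occurrence of $\norm v$ by $r$ using monotonicity of $\sn_\delta$ and of $t\mapsto\sn_\delta(t)/t$ on $[0,\pi_{\frac{\Delta+\delta}{2}})$ --- to $\alpha C_{2,r}$, which together with the first term gives $\widehat\alpha_r=\alpha\pa{C_{1,r}+C_{2,r}}$.

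The delicate part is this last reduction: one must verify that each radius-dependent quantity produced by Rauch's theorem and by the full-Hessian bound, evaluated at the actual norm $\norm v\le r$ of the point, is dominated by its value at the diameter $r$, and that the powers of $\norm v$ coming from the distance estimate combine with the $\norm v^{-2}$ in the full-Hessian bound to leave precisely the asserted constant; this is exactly where the shape of $C_{1,r}$ and $C_{2,r}$ --- notably the half-argument $\sn_\delta(r/2)$ appearing in $C_{2,r}$ --- is forced. The hypothesis that $\overline{\mathcal X}$ avoids the conjugate locus is used to guarantee that $\pa{\dif\exp_p}_v$ is invertible throughout $\overline{\mathcal X}$, so that both comparison estimates are in force there and a critical point of $f$ in $\overline{\mathcal X}$ is genuinely a critical point of $f\circ\exp_p$.
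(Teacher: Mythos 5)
Your decomposition is exactly the paper's: the Leibnitz rule for $\conn\dif\pa{f\circ\exp_p}$, Rauch's theorem for $\norm{\dif\exp_p}^2\leq C_{1,r}$, the full-Hessian bound of~\Cref{thm:full_second_order_bounds} for $\norm{\conn\dif\exp_p}$, and the critical point together with $\alpha$-weak convexity to control $\norm{\dif f}$. The one step that does not go through as you claim is the final reduction of the cross term to $\alpha C_{2,r}$. Your own estimates give $\norm{\dif f\vert_x}\leq\alpha\,d(x,x^\ast)\leq\alpha r\sqrt{C_{1,r}}$ and $\norm{\pa{\conn\dif\exp_p}_v}\leq\lfrac{8}{3r^2}\sn_\delta\pa[\big]{\lfrac{r}{2}}^2\pa{\cdots}=C_{2,r}/r^2$, so their product is $\alpha\sqrt{C_{1,r}}\,C_{2,r}/r$, not $\alpha C_{2,r}$: a factor $\sqrt{C_{1,r}}/r$ is left over, and since $C_{1,r}\geq 1$ this factor exceeds $1$ whenever $r<1$. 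No monotonicity of $\sn_\delta$ or of $t\mapsto\sn_\delta(t)/t$ repairs this; it is a mismatch of one power of $r$ between the Lipschitz constant of $f$ (which is linear in the diameter) and the $r^{-2}$ prefactor of the full-Hessian bound.

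For what it is worth, the paper's own proof reaches $\alpha C_{2,r}$ by asserting that $f$ is ``$\alpha r^2$-Lipschitz'' on $\mathcal{X}$, which is precisely the factor needed to cancel the $r^{-2}$; but weak convexity plus a critical point at distance at most $r$ only yields an $\alpha r$-Lipschitz bound (times the distortion $\sqrt{C_{1,r}}$ when, as in your argument, the diameter is measured in $T_pM$ rather than in $M$). So your more careful accounting actually surfaces a factor-of-$r$ discrepancy that is present in the paper as well, and the constant your argument honestly proves is $\widehat{\alpha}_r=\alpha\pa{C_{1,r}+\sqrt{C_{1,r}}\,C_{2,r}/r}$. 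Everything else---the use of the conjugate-locus hypothesis to keep both comparison theorems in force, the radial/normal splitting via Gauss's lemma, and the geodesic-integration bound on $\norm{\grad f}$---matches the paper's proof.
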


    The quantity $\widehat{\alpha}_r$ depends on the size of $r$, so the distortion produced by precomposing with the exponential map may not be uniformly bounded if the manifold has negative curvature and is not compact---as is the case with the hyperbolic space. On the other hand, we can give exact bounds for this deformation so, in practice, we can assume an upper bound on $r$ and then tune it as necessary. This approximation is common when proving convergence in manifolds that depend on the curvature (see for example \parencite{bonnabel2013stochastic,zhang2016riemannian,sato2019riemannian,tripuraneni2018averaging,ahn2020nesterov}).

    Finally, we showcase how to use these bounds to prove convergence of static and dynamic trivializations. This is a corollary from the previous results and the convergence of gradient descent in $\RR^n$.
}{%
\input{intro_chapter.tex}%
}

\section{Differential geometry: Conventions and notation}\label{sec:background}
    In this section, we stablish the notation used to refer to some recurrent objects, such as the distance function, the segment domain, mixed derivatives and pullbacks of connections. We also use it to recall some definitions from differential geometry that are not that commonly seen in the area of optimization, such as the definition of the Lie derivative and covariant derivative of tensors.

    \paragraph{The manifold}
    We will always work on a connected and complete Riemannian manifold $(M, \gm)$ of dimension $m \geq 2$ and at least $C^4$ regularity. We will denote the sectional curvature on the plane defined by two vectors $u, v \in T_pM$ as $\sec(u,v)$. When we write $\sec \leq \Delta$ for a constant $\Delta \in \RR$, we mean that for every $p \in M$, $u,v \in T_pM$, $\sec(u,v) \leq \Delta$. By a \emph{geodesic}, we will always mean a unit speed geodesic. We will implicitly identify $T_{\pa{p,v}}(T_pM) \iso T_pM$ for every $v \in T_pM$.

    \paragraph{Distance function, segment domain, conjugate and cut locus}
    For a point $p \in M$, we define the \emph{segment domain} as
    \[
        \segall \defi \set{v \in T_pM | \exp_p(tv) \text{ is length minimizing for }t \in [0, 1]}
    \]
    and we denote its interior by $\tseg$. The set $\tseg$ is a star-shaped open neighbourhood of $0$ in $T_pM$. On this neighbourhood the exponential map is a diffeomorphism. In particular, there exists an inverse $\exp_p^{-1}$ and $\dif\exp_p$ is full rank on $\tseg$. We will also write $\seg \defi \exp_p(\tseg)$. $\seg$ is sometimes referred as a \emph{normal neighborhood of $p$}, and its complement is called the \emph{cut locus} $\cut(p) \subset M$.

    A conjugate point $\exp_p(v) = q$ of $p$ is one such at which $\pa{\dif \exp_p}_v$ is not full rank. We denote the set of all these points $\conjloc(p)$. We have that $\conjloc(p) \subset \cut(p)$. By a theorem of Sard, $\conjloc(p)$ has measure zero in $M$~\parencite{sard1965hausdorff}. Even more, $\cut(p)$ has Hausdorff dimension at most $n-1$~\parencite{itoh1998dimension}. As such, $\seg$ is an open neighbourhood of $p$ that covers almost all the manifold.

    We will write $r(x) = d(x,p)$ for the distance to a fixed point $p$. This function is differentiable on $\seg \backslash \set{p}$, with gradient the unit radial vector field emanating from $p$. In particular, we have that for a geodesic $\gamma$ starting at $p$, $\grad r\vert_\gamma = \dgamma$. The cut locus of $p$ is exactly the set of points other than $p$ at which the distance function $r$ is not differentiable. This will be import

    \paragraph{Einstein convention}
    Whenever we refer to coordinates $\set{x^i}$ in $\seg$, we will always assume that these are the normal coordinates given by $\exp_p^{-1}$ and certain fixed frame on $T_pM$. We will denote $\partial_i \defi \frac{\partial}{\partial x^i}$ for short. We will use Einstein's summation convention that if an index appears as a super-index and a sub-index in a formula, it means that we are summing over it. For example, for a vector field $X$ in local coordinates we write
\[
    X = \sum_{i=1}^n X^i \partial_i = X^i \partial_i.
\]

    \begin{remark}
        Almost all the results in this paper can be developed working just on a neighbourhood of a geodesic. As such, most of the times it will not be necessary to work on a subset of $\seg$ but merely on a neighbourhood of a geodesic at which on which the exponential is a diffeomorphism. We will make sure of make this explicit in each result throughout the paper.
    \end{remark}

    \paragraph{Connections and derivations}
    We will write $D_X f \defi \dif f(X)$ for the directional derivative of a function along a vector field $X$ to disambiguate with the gradient of a function, which we will denote by $\grad f$ its gradient. We denote by $\conn$ the Levi-Civita connection on $\seg$ and by $\connflat$ the pushforward of the flat connection on $T_pM$ along $\exp_p$. In the same way that we do for the norms, we will abuse the notation and also denote by $\conn$ the associated connections defined by $\conn$ in the associated bundles. We recall that connections on tensor bundles are defined so that the Leibnitz rule holds. For example, for the Hessian and three vector fields $X, Y_1, Y_2$,
    \[
    D_X\pa{\Hess f\pa{Y_1,Y_2}} = \pa{\conn_X\Hess f}\pa{Y_1, Y_2} + \Hess f\pa{\conn_X Y_1, Y_2} + \Hess f\pa{Y_1, \conn_X Y_2}
    \]
    so we define
    \[
     \pa{\conn_X\Hess f}\pa{Y_1, Y_2} \defi
    D_X\pa{\Hess f\pa{Y_1,Y_2}} - \Hess f\pa{\conn_X Y_1, Y_2} - \Hess f\pa{Y_1, \conn_X Y_2}.
    \]

    We can do the same for a tensor $T$ of type $(0,s)$. We define its covariant derivative $\conn T$ as the tensor of type $(0, s+1)$ such that the Leibnitz rule holds
    \[
        \pa{\conn_X T}(X_1, \dots, X_s) \defi D_X\pa{T\pa{X_1, \dots X_s}} - \sum_{i=1}^s T\pa{X_1, \dots, \conn_X X_i, \dots, X_s}.
    \]
    For example, if we have in local coordinates the differential of a function is a $(0, 1)$ tensor $\dif f = f^i \dif x_i$. We may compute its Hessian in local coordinates as the $(0,2)$ tensor given by
    \[
        \conn \dif f = \conn(f^i\dif x_i) = \dif f^i \tensor \dif x_i + f^i \conn \dif x_i
    \]
    where we have used that the connection on functions is just the differential, by definition.

    The Lie derivative of tensors is defined in the same way. For a $(0, s)$ tensor $T$, the Lie derivative of $T$, $\lie_X T$ in the direction of a tensor $X$ is defined as the $(0, s)$ tensor
    \[
        \pa{\lie_X T}(X_1, \dots, X_s) \defi D_X\pa{T\pa{X_1, \dots X_s}} - \sum_{i=1}^s T\pa{X_1, \dots, \lie_X X_i, \dots, X_s}.
    \]

    \paragraph{Pullback connections}
    When dealing with a smooth curve $\deffun{\gamma : [0,r] -> M;}$, we will sometimes want to derive a vector field $X$ along it. We will write $\conn_{\partial_t}X$ for the \emph{covariant derivative along $\gamma$}, that is, the \emph{pullback connection along $\gamma$}.

    We will sometimes write the covariant derivative along $\gamma$ as $\dot{X} \defi \conn_{\partial_t} X$. This comes from the fact that if we choose a parallel frame $\set{e_i}$ along $\gamma$, that is $\conn_{\partial_t} e_i = 0$, we have that by the Leibnitz rule
    \[
        \dot{X} = \conn_{\partial_t}\pa{X^ie_i} = \conn_{\partial_t}\pa{X^i}e_i + X^i\conn_{\partial_t}\pa{e_i} = \dot{X}^ie_i
    \]
    where $\dot{X}^i$ are just the regular derivatives of the coordinate functions. With this notation, the usual equation for geodesics simply reads $\ddot{\gamma} = 0$.

    If instead of a curve we have an embedded surface
    \[
    \deffun{c : [0,r] \times [-\epsilon, \epsilon] -> M;
            (t,s) -> c(t,s)}
    \]
    we will analogously write $\partial_s$ for the vector field in the direction of the second component. Suppose that we have a vector field $J$ along $\gamma(t) \defi c(t,0)$ given by
    \[
        J(t) = \frac{\partial c}{\partial s}(t, 0) = \dif c(\partial_s) \vert_{\gamma}
    \]
    where $\partial_s$ is the coordinate vector field on the second component of $[0,r] \times [-\epsilon, \epsilon]$. We will abuse the notation and write for a vector field $X$ along $\gamma$
    \[
        \conn_J X \defi \conn_{\partial_s} X
    \]
    in the same way that we may write the equation for the geodesics as
    \[
        \conn_{\dgamma}\dgamma \defi \conn_{\partial_t}\dgamma = 0.
    \]

    Sometimes, we will also pullback a connection along the exponential map $\exp_p$. For the distance function $r$ to $p$, we have the radial vector field on $\seg \backslash \set{p} \subset M$ given by $\grad r$. By Gauss's lemma, the pullback of this vector field to $\tseg \backslash \set{0} \subset T_pM$ via the exponential map is exactly the radial vector field on the tangent space $\partial_r$, that is
    \[
        \dif \exp_p(\partial_r) = \grad r.
    \]
    With this notation, we can write the equation for the geodesics on the whole $\seg \backslash \set{p}$ using the pullback connection along $\exp_p$ as
    \[
        \conn_{\partial_r} \grad r = 0.
    \]

    \paragraph{Hessian and iterated Hessian}
    If we want to evaluate the $(0,2)$-Hessian twice on the same vector, we may do so by simplifying this problem to just performing a Euclidean derivative. Let $\gamma$ be the geodesic such that $\dgamma(0) = X$
    \begin{equation}\label{eq:hess_derivative_along_geodesic}
        \pa{f \circ \gamma}''(0) = D_{\dgamma(0)}\scalar{\grad f, \dgamma} = \scalar{\conn_{\dgamma(0)} \grad f, \dgamma(0)} = \conn\dif f(X,X) = \Hess f(X, X).
    \end{equation}
    We will use the notation $\conn \dif f$ to refer to the Hessian, given that it generalizes to smooth maps between manifolds such as the exponential map.

    We also recall the definition of the iterated Hessian of a function as a $(0,2)$ tensor. This is also best introduced in its $(1,1)$ form
    \[
        \Hess^2 f(X) \defi \pa{\Hess f \circ \Hess f}(X) = \conn_{\conn_X \grad f}\grad f.
    \]
    Its $(0, 2)$ version is consequently defined as
    \begin{equation}\label{eq:def_iterated_hessian}
        \Hess^2 f(X,Y) \defi \scalar{\Hess f(\Hess f(X)), Y} = \Hess f(\Hess f(X), Y) = \scalar{\Hess f(X), \Hess f(Y)}
    \end{equation}
    where we have used that the Hessian is symmetric. Through this formula it is clear that $\Hess^2 f$ is also symmetric.

    \section{First Order Bounds for the Exponential Map}\label{sec:first_order_bounds}
In this section, we give bounds on the norm of the differential of the exponential map of a manifold with bounded sectional curvature. In particular, if the sectional curvature of any plane of $M$ is bounded above and below by $\delta \leq \sec \leq \Delta$, for a ball $B_p(r)$, the bounds will be of the form
\[
    f_\Delta(r)\norm{w} \leq \norm{\pa{\dif \exp_p}_{v}(w)} \leq f_\delta(r)\norm{w} \mathrlap{\qquad \forall w \in T_pM, \norm{v} \leq r}
\]
for $r \geq 0$ and suitable functions $\deffun{f_\delta, f_\Delta : [0, r] -> \RR^+;}$.

First order bounds for the exponential map have been known since the times of Cartan~\parencite{cartan1928lesons}, and are well known in areas such as comparison geometry or PDEs, but they are not so known in the area of optimization on manifolds. The tight bounds that we will present here are often referred to as \emph{Rauch's theorem} as he was the first to show these bounds in the positive curvature-case~\parencite{rauch1951contribution} and later in the general case~\parencite{rauch1959geodesics}.

There are quite a few proof techniques of these theorems. There exist proofs via the second variation formula of the energy~\parencite{spivak1999comprehensive}, through bounds on the norm via the Jacobi equation~\parencite{jost2017riemannian}, by bounding Riccati equation on self-adjoint operators and integrating the result~\parencite{eschenburg1994comparison}, or by the study of the distance function to a given point~\parencite{gromov1981structures}. We choose to present here this fourth approach.

The approach presented here was first introduced by Gromov in the context of volume bounds, in what's called now the Bishop--Gromov theorem. This approach has the advantage of yielding upper and lower bounds for positive and negative curvatures at the same time. Other approaches need different techniques for the upper and lower bounds or they just give some weaker version of the theorem with constraints on the sign of the curvature. This approach also has a more geometric flavour as it accounts for bounding the principal curvatures of geodesic balls on the manifold. The general strategy of this proof is to decompose certain homogeneous second order differential equation into a Riccati equation and a system of first order equations, bound the Riccati equation, and then integrate the result to get the bounds on the differential of the exponential. This approximation to Jacobi fields and parallel vector fields is can be found in~\parencite{cheeger2008comparison}, and the approximation using curvature equations is from~\parencite{petersen2016riemannian}. Some of the proofs are either corrections or simplifications of the original ones.

    \subsection{Jacobi fields}
    We begin by introducing what will be the main tool that we will just throughout the paper: Jacobi fields. Jacobi fields are certain vector fields along a given geodesic that describe the behavior of the differential of the exponential map along this geodesic. We will see that they are the solution of a certain differential equation, and we will use this equation to give the bounds on the differential of the exponential.

    Before defining what Jacobi fields are and deducing this differential equation we will need a lemma. This lemma that can loosely be interpreted as \emph{the Lie derivative commutes with the differential of a smooth map}.

    \begin{lemma}\label{lemma:lie_derivative_pushforward}
        For a diffeomorphism $f$ and two vector fields $X, Y$ on $M$ we have that
        \[
            \dif f([X,Y]) = [\dif f(X), \dif f (Y)].
        \]
    \end{lemma}
    \begin{proof}
        See, for example,~\parencite[][1. Lemma 22]{oneill1966fundamental}.
    \end{proof}

    With this lemma in hand, we are ready to show that the differential of the exponential satisfies certain differential equation.

    \begin{proposition}[First order Jacobi equation]\label{prop:riccati_jacobi}
        Let $(M, \gm)$ be a complete Riemannian manifold. Let $\deffun{\gamma : [0, r] -> M;}$ be a geodesic in $\seg$ with initial unit vector $v \defi \dgamma(0)$ and a vector $w \in T_pM$ such that $w \perp v$. Consider the following variation of $\gamma$ in the direction of $w$
        \[
            \deffun{c : [0, r] \times (-\epsilon, \epsilon) -> M ; (t, s) -> \exp_p(t(v+sw))}
        \]
        where $\epsilon$ is small enough so that $c$ defines an embedded surface.

        Define the vector field $J$ along $\gamma$ as
        \[
            J(t) \defi \dif c(\partial_s)\vert_\gamma = \frac{\partial c}{\partial s}(t, 0) = \pa{\dif \exp_p}_{tv}(tw).
        \]
        This vector field solves the following differential equation on $\gamma$
        \[
            \lie_{\grad r} J \vert_{\gamma} = 0
        \]
        or equivalently, using the pullback connection
        \begin{equation}\label{eq:riccati}
            \conn_{\dgamma} J = \conn_J \grad r.
        \end{equation}
    \end{proposition}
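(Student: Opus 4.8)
The plan is to exploit the fact that $J$ is the variation field of $c$ in the $s$-direction, together with the commutativity of the coordinate fields $\partial_t,\partial_s$ on the rectangle $[0,r]\times(-\epsilon,\epsilon)$. Since $c$ defines an embedded surface, it is a diffeomorphism onto its image, so \Cref{lemma:lie_derivative_pushforward} gives
\[
    [\dif c(\partial_t), \dif c(\partial_s)] = \dif c([\partial_t,\partial_s]) = 0
\]
as vector fields on that surface. Writing $J = \dif c(\partial_s)$, the whole argument reduces to rewriting $\dif c(\partial_t)$ in terms of $\grad r$ and then restricting to $s=0$.

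First I would identify $\dif c(\partial_t)$. After shrinking $\epsilon$, star-shapedness of $\tseg$ places the rectangle inside $\tseg$, so each $c(t,s) = \exp_p(t(v+sw))$ lies in the normal neighbourhood and $r(c(t,s)) = t\lambda(s)$ with $\lambda(s) \defi \norm{v+sw}$. The unit-speed geodesic from $p$ through $c(t,s)$ is $\tau \mapsto \exp_p\pa{\tau(v+sw)/\lambda(s)}$, so from the description of $\grad r$ as the unit radial field we get
\[
    \dif c(\partial_t)\big|_{c(t,s)} = \pa{\dif\exp_p}_{t(v+sw)}(v+sw) = \lambda(s)\,\grad r\big|_{c(t,s)},
\]
where crucially $\lambda$ depends on $s$ alone.

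Then I would expand the bracket using $[fX,Y] = f[X,Y] - (D_Y f)X$:
\[
    0 = [\dif c(\partial_t), J] = \lambda\,[\grad r, J] - (D_J\lambda)\,\grad r,
\]
and restrict to $\gamma$, i.e.\ $s=0$. There $\lambda(0) = \norm{v} = 1$ and $D_J\lambda = \tfrac{\dif}{\dif s}\norm{v+sw}\big|_{s=0} = \scalar{v,w} = 0$ since $w\perp v$. Hence $[\grad r, J]\big|_\gamma = 0$, that is $\lie_{\grad r}J\big|_\gamma = 0$. The equivalent form then follows from torsion-freeness of the Levi-Civita connection: $[\grad r, J] = \conn_{\grad r}J - \conn_J\grad r$, and on $\gamma$ we have $\grad r\big|_\gamma = \dgamma$, so $\conn_{\dgamma}J = \conn_J\grad r$.

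The only point that needs genuine care is the identification $\dif c(\partial_t) = \lambda\,\grad r$ away from $\gamma$: one must verify the rectangle sits inside the normal neighbourhood so that $\grad r$ is defined and smooth on the surface, and one must track that the conformal factor $\lambda$ is a function of $s$ alone, so that $D_J\lambda$ is a plain $s$-derivative which the orthogonality hypothesis then kills. It is worth remarking that $w\perp v$ is essential here and not merely convenient — for $w=v$ one has $J(t)=t\dgamma(t)$, for which $\conn_{\dgamma}J = \dgamma \neq 0 = \conn_J\grad r$. Everything else is the one-line bracket computation above.
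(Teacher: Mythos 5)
Your proof is correct and follows the same route as the paper: commute the coordinate fields on the variation, push the Lie bracket forward through $\dif c$ using \Cref{lemma:lie_derivative_pushforward}, and convert to the connection form via torsion-freeness. You are in fact more careful than the paper at the one delicate point---the paper passes from $[\dif c(\partial_t), J]=0$ to $[\grad r, J]\vert_\gamma = 0$ by noting only that $\dif c(\partial_t)\vert_\gamma = \grad r\vert_\gamma$, whereas a Lie bracket restricted to $\gamma$ depends on first-order data off $\gamma$; your identification $\dif c(\partial_t) = \lambda(s)\,\grad r$ with $D_J\lambda\vert_{s=0} = \scalar{v,w} = 0$ supplies exactly that justification and correctly isolates where the hypothesis $w\perp v$ enters.
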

    \begin{proof}
        The fact that $\gamma$ is in $\seg$ is equivalent to saying that $tv \in \tseg$ for $t \in [0, r]$. From this we get that the $\epsilon$ in the definition exists as $\tseg$ is open. Since $c$ defines an embedded surface, $c$ is a diffeomorphism onto its image.

        Since $\partial_t, \partial_s$ are coordinate vector-fields of the surface defined by $c$, we have that
        \[
            [\partial_t, \partial_s] = 0.
        \]
        Using that $c$ is a diffeomorphism, by~\Cref{lemma:lie_derivative_pushforward} together with the fact that the Lie derivative restricted to a submanifold is the Lie derivative of the restrictions, we have that
        \[
            [\dif c\pa{\partial_t},\dif c\pa{\partial_s}] = \dif c\pa{[\partial_t, \partial_s]} = 0.
        \]
        We get the Lie formulation of the Jacobi equation by noting that $\dif c(\partial_t) \vert_\gamma = \grad r \vert_\gamma$. The equation using the pullback connection is just a reformulation of the Lie derivative using that the Levi-Civita connection is torsion-free.
    \end{proof}

    \begin{remark}
        Note that, even though the differential equation involves a Lie bracket, we just need to define $J$ along the flow of $\grad r$, which is just a geodesic $\gamma$. This equation can be extended into a PDE on the whole $\seg \backslash \set{p}$, looking for vector fields on this domain such that
        \[
            \lie_{\grad r}W = 0.
        \]
        One such a solution is clearly given by the radial vector field $W = \grad r$.

        To find other solutions, we imitate the construction that we did in~\Cref{prop:riccati_jacobi} and define $W$ on a sphere around $p$ and extend it radially to the whole $\seg$ as
        \[
            W(x) = \dif\exp_p\pa[\Big]{r(x)W\pa[\Big]{\lfrac{x}{r\pa{x}}}}.
        \]
        It is clear by~\Cref{prop:riccati_jacobi} that this vector field solves the first order Jacobi equation on the whole $\seg \backslash \set{p}$.
    \end{remark}

        We will now use this equation to derive the linear version of the Jacobi equation.

    \begin{proposition}[Jacobi equation]\label{prop:jacobi_equation}
    Let $\deffun{\gamma : [0, r] -> M;}$ be a geodesic with initial values $\gamma(0) = p$, $\dot{\gamma}(0) = v$. Then $J(t) = \pa{\dif \exp_p}_{tv}(tw)$ is a vector field along $\gamma$ that solves the following second order homogeneous linear differential equation
    \[
        \ddot{J} + R(J, \dgamma)\dgamma = 0.
    \]
    We call this equation the \emph{Jacobi equation}.
\end{proposition}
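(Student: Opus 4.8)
The plan is to realize $J$ as the variation field of a one-parameter family of geodesics and then read the Jacobi equation off the standard commutation identities for the pullback connection along a parametrized surface. Concretely, I would keep the variation $\deffun{c : [0,r]\times(-\epsilon,\epsilon) -> M; (t,s) -> \exp_p(t(v+sw))}$ from \Cref{prop:riccati_jacobi}, so that $\gamma(t) = c(t,0)$, $\dgamma(t) = \dif c(\partial_t)\vert_\gamma$, and $J(t) = \dif c(\partial_s)\vert_\gamma = \pa{\dif\exp_p}_{tv}(tw)$ by the chain rule. Note that, unlike in \Cref{prop:riccati_jacobi}, here we need neither $w\perp v$ nor $\norm{v}=1$ nor that $c$ be an embedding: every identity below is a pointwise statement valid for an arbitrary smooth $c$, and completeness of $M$ guarantees that $c$ is defined.

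I would then record the three ingredients, each standard. First, for each fixed $s$ the curve $t\mapsto c(t,s) = \exp_p\pa{t(v+sw)}$ is a geodesic up to affine reparametrization, so $\conn_{\partial_t}\dif c(\partial_t) = 0$ identically on the rectangle. Second, since $[\partial_t,\partial_s]=0$ and $\conn$ is torsion-free, the mixed covariant derivatives of $c$ commute, $\conn_{\partial_t}\dif c(\partial_s) = \conn_{\partial_s}\dif c(\partial_t)$ (the symmetry lemma; this is the infinitesimal form of \Cref{lemma:lie_derivative_pushforward}). Third, for any vector field $Z$ along $c$, the definition of the curvature tensor together with $[\partial_t,\partial_s]=0$ gives
\[
    \conn_{\partial_t}\conn_{\partial_s}Z - \conn_{\partial_s}\conn_{\partial_t}Z = R\pa{\dif c(\partial_t), \dif c(\partial_s)}Z.
\]

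With these in hand, the computation is immediate: applying the symmetry lemma, then the curvature identity with $Z = \dif c(\partial_t)$, then the geodesic identity,
\[
    \conn_{\partial_t}\conn_{\partial_t}\dif c(\partial_s)
    = \conn_{\partial_t}\conn_{\partial_s}\dif c(\partial_t)
    = \conn_{\partial_s}\conn_{\partial_t}\dif c(\partial_t) + R\pa{\dif c(\partial_t),\dif c(\partial_s)}\dif c(\partial_t)
    = R\pa{\dif c(\partial_t),\dif c(\partial_s)}\dif c(\partial_t).
\]
Restricting to $s=0$ and using the antisymmetry of $R$ in its first two arguments yields $\ddot J = R(\dgamma, J)\dgamma = -R(J,\dgamma)\dgamma$, that is, $\ddot{J} + R(J,\dgamma)\dgamma = 0$.

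There is no serious obstacle here; the only points that deserve care are the three bracketed facts, and in particular checking that they hold for the possibly non-embedded surface $c$ — each is a local statement in coordinates that follows from the symmetry of the Christoffel symbols and the equality of mixed partials. As a consistency check, one could instead, on $\seg$, differentiate the first-order Jacobi equation $\conn_{\dgamma}J = \conn_J\grad r$ of \Cref{prop:riccati_jacobi} along $\gamma$: using $\conn_{\partial_r}\grad r = 0$ and the same curvature identity (now with $Z = \grad r$) one recovers $\ddot{J} + R(J,\dgamma)\dgamma = 0$.
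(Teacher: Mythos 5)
Your proof is correct, but it takes a genuinely different route from the paper's. The paper derives the second-order equation by differentiating the first-order equation $\conn_{\dgamma} J = \conn_J \grad r$ of~\Cref{prop:riccati_jacobi} along $\gamma$ and invoking $\conn_{\grad r}\grad r = 0$, which forces it to treat the radial case $w \parallel v$ separately (there the explicit solution $J(t) = t\norm{w}\dgamma(t)$ is checked by hand) and inherits the hypotheses of~\Cref{prop:riccati_jacobi}: that $\gamma$ lies in $\seg$, that $c$ is an embedded surface so that~\Cref{lemma:lie_derivative_pushforward} applies, and implicitly the presence of the vector field $\grad r$. You instead compute directly on the geodesic variation $c$, using the symmetry lemma $\conn_{\partial_t}\dif c(\partial_s) = \conn_{\partial_s}\dif c(\partial_t)$ and the curvature commutation identity for the pullback connection, both of which are pointwise local statements valid for an arbitrary smooth map of a rectangle into $M$. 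What your approach buys is uniformity and generality: all $w$ are handled at once with no radial/normal split, no embeddedness or segment-domain hypothesis is needed, and the argument works along any geodesic (which is precisely the generality the paper later wants, as its remark after~\Cref{thm:rauch} notes that the Jacobi-field formulation extends beyond $\seg$). What the paper's route buys is that the intermediate first-order equation $\lie_{\grad r}J = 0$ and its reformulation via $\Hess r$ are themselves the workhorses of the Rauch estimates in the rest of the section, so deriving the second-order equation from them costs nothing extra there. Your closing consistency check is, in essence, the paper's actual proof.
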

\begin{proof}
    If $w$ is parallel to $v$, $J$ takes the form $J(t) = t\norm{w}\dgamma(t)$ or $J(t) = -t\norm\dgamma(t)$. These fields solve the differential equation, since $R(\dgamma, \dgamma)\dgamma = 0$.

    If $w$ is normal to $v$, we may differentiate~\eqref{eq:riccati} to get
    \[
        \conn_{\partial_t} \conn_{\partial_t} J = \conn_{\partial_t} \conn_J \grad r
    \]
    and using the definition of the curvature tensor and the fact that $[J, \grad r] = 0$ we get
    \[
        \conn_{\partial_t} \conn_{\partial_t} J + R(J, \grad r)\grad r = 0.
    \]
    We then get the Jacobi equation by restricting this equation to $\gamma$, since $\grad r \circ \gamma = \dgamma$.

    Using an orthonormal parallel frame $\set{e_i}$ along $\gamma$ such that $e_1 = \dgamma$, and expressing a solution of this equation as $J^ie_i$, where $\deffun{J^i : [0, r] -> \RR;}$, we see that, in fact, this is a second order linear differential equation on $[0, r]$
    \[
        \ddot{J}^i + R_j^iJ^j = 0 \mathrlap{\qquad i = 1, \dots, n}
    \]
    with coefficients
    \[
        R_j^i =
        \begin{cases}
            0 \qquad &\text{if } i = 1 \\
            R(e_j, \dgamma, \dgamma, e_i) \qquad &\text{if } i = 2, \dots, n.
        \end{cases}\qedhere
    \]
\end{proof}

\begin{definition}[Jacobi field]
    We say that a vector field $J$ along a geodesic $\gamma$ is a \textbf{Jacobi field} if it satisfies the Jacobi equation.
\end{definition}

\begin{remark}[Basic properties of Jacobi fields]
    By elementary theory of differential equations, the Jacobi equation has $2n$ independent solutions, defined by the initial values $(J(0), \dot{J}(0)) \in T_pM \times T_pM$. By construction, we have found $n$ independent solutions given by
    \[
        J(t) = \pa{\dif \exp_p}_{tv}(tw) \mathrlap{\qquad \forall w \in T_pM.}
    \]
    These correspond to the initial values $J(0) = 0$, $\dot{J}(0) = w$. Geometrically, they correspond to a family of geodesics $c$ that fixes the initial point, that is, $c(0, s) = p$.

    It is also direct to show that $\dgamma$ is another solution to this equation with initial values $J(0) = v$, $\dot{J}(0) = 0$. The other $n-1$ independent solutions correspond to variations of the geodesic $\gamma$ that do not fix the initial point $p$. These solutions will not be important for our analysis.

    It starts being clear now the importance of Jacobi fields. If we want to control the norm of the differential of $\exp_p$ at a point $rv \in \tseg$, for $r > 0$, $\norm{v} = 1$, we may define the Jacobi field along the geodesic $\deffun{\gamma : [0,r] -> M;}$ with initial conditions $(p,v)$ and we have that
    \[
        \pa{\dif\exp_p}_{rv}(w) = \frac{J(r)}{r}.
    \]
    In order to bound the norm of the differential of the exponential, we just need to bound the norm of the solutions of the Jacobi equation.

    If the vector $w$ is parallel to $v$, as we saw in the proof of~\Cref{prop:jacobi_equation}, the Jacobi field takes the form $J(t) = t\norm{w}\dgamma(t)$. As such, in this direction we have an exact solution of the Jacobi equation and we can compute its norm exactly as $\norm{J(t)} = t\norm{w}$.

    If the vector $w$ is perpendicular to $\dgamma(0)$, then $J(t)$ is perpendicular to $\dgamma(t)$ for every $t \in [0, r]$, as the equation for $J^1$ would be given by
    \[
        \ddot{J}^1 = 0 \mathrlap{\qquad (J^1(0), \dot{J}^1(0)) = (0,0).}
    \]

    Geometrically, the previous proposition says that Jacobi fields just rotate around the vector field defined by $\dgamma$. In symbols, this means that if we split a Jacobi field along $\gamma$ into its radial and normal part as
    \[
        \paral{J} = \scalar{J, \dgamma}\dgamma \qquad \normal{J} = J - \paral{J}
    \]
    if $\paral{\dot{J}}(0) = 0$ then $\paral{J}(t) = 0$ for every $t \in [0, r]$.

    The results discussed in this remark are commonly known as the \emph{Gauss's lemma}.

    Jacobi fields are also very closely related to conjugate points and the conjugate locus. A point $q = \exp_p(v)$ is conjugate to $p$ if the exponential at $v$ is not full rank. Another way of looking at this definition is by defining a point $q$ conjugate to $p$ if there exists a Jacobi field connecting $p$ and $q$ such that it is zero at $p$ and $q$. We will see when studying Rauch's theorem that the estimates for Jacobi fields will work on every point on the manifold but the conjugate locus, as these points will be exactly the singularities of the maps that we work with. For a unit vector $v$, we will denote by $r_{\conjloc}(v) \in (0, \infty]$ the smallest number at which $q = \exp_p(r_{\conjloc}v)$ is conjugate to $p$.
\end{remark}

    \subsection{Parallel vector fields}
    In the same way that Jacobi fields with initial condition $J(0) = 0$ and $\dot{J}(0) \perp \dgamma(0)$ are the vector fields such that $\lie_{\grad r} J = 0$, we have parallel vector fields.

    \begin{definition}
        We say that $E$ is a parallel vector field on $\seg \backslash \set{p}$ if it satisfies
        \[
            \conn_{\grad r} E = 0.
        \]
    \end{definition}

    Parallel vector fields can be easily described along a geodesic as parallel transporting a vector $w \in T_{\gamma(0)}M$ along it. Analogously, we can also define a parallel vector field on the whole $\seg \backslash \set{p}$ by specifying a vector field on a sphere $S_\epsilon = \set{x \in M | d(x, p) = \epsilon} \subset\seg$ and extending it to the whole $\seg$ parallel transporting this vector field along geodesics.

    \subsection{Rauch's theorem}
    We now go back to the study of the norm of the differential of the exponential. The strategy that we will follow will be that of bounding the derivative of the norm of the exponential. For that end, let $J$ be a Jacobi field along a geodesic $\gamma$ such that $J(0) = 0$, $\dot{J}(0) \perp \dgamma(0)$. As we already saw before, this vector field takes the form
    \[
        J(t) = \pa{\dif \exp_p}_{t\dgamma(0)}\pa{t\dot{J}(0)}.
    \]
    The derivative of its norm is given by
    \[
        \frac{\dif}{\dif t}\norm{J} = \frac{\scalar{\dot{J}, J}}{\norm{J}}.
    \]
    At first sight it looks like we have not achieved much, as we still have a term involving the square of the norm of $J$ on the right hand side, but it turns out that it can be rewritten in a convenient way using~\eqref{eq:riccati},
    \[
        \scalar{\dot{J}, J} = \scalar{\conn_{\grad r} J, J} = \scalar{\conn_J \grad r, J} = \Hess r(J, J).
    \]

    Our plan will be to bound the Hessian of the distance to $p$ on vectors of norm $1$ to get bounds on the log-derivative of the norm of the exponential map and then integrate these to get bounds on $\norm{J}$. To this end, we start by computing formulas that relate Hessian of the distance function to the curvature tensor.

    A geometric interpretation of this approach comes after noting that the Hessian of the distance function is exactly the second fundamental form (or shape operator) of the distance function. As such, this quantity can be interpreted geometrically as the variation of the curvature of geodesic spheres $S_t(p) = \set{x \in M | d(x,p) = t}$ in the radial direction.

    \begin{proposition}[Radial Curvature Equations]\label{prop:radial_curvature}
        Let $(M, \gm)$ be a Riemannian manifold. For a point $p \in M$, we have on $\seg$
        \begin{align}
            \pa{\lie_{\grad r}\Hess r}(X, Y)& - \Hess^2 r(X, Y) = - R(X, \grad r, \grad r, Y)\label{eq:curv1} \\
            \pa{\conn_{\grad r}\Hess r}(X, Y)& + \Hess^2 r(X, Y) = - R(X, \grad r, \grad r, Y)\label{eq:curv2}.
        \end{align}
    \end{proposition}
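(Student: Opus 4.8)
The plan is to prove the second identity \eqref{eq:curv2} first, since it is the cleaner of the two, and then deduce \eqref{eq:curv1} from it using the relation between the Lie derivative and the covariant derivative of a symmetric $(0,2)$-tensor together with the torsion-freeness of $\conn$. For \eqref{eq:curv2}, I would work at an arbitrary point of $\seg\setminus\set{p}$ (the point $p$ itself being handled by continuity, or by noting both sides extend smoothly) and reduce to evaluating on vector fields $X,Y$ that are convenient: specifically, I would take $X,Y$ to be vector fields on $\seg\setminus\set{p}$ satisfying $\conn_{\grad r}X=\conn_{\grad r}Y=0$, i.e.\ parallel along the radial geodesics, as introduced in the subsection on parallel vector fields. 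Since the identity is tensorial in $X$ and $Y$, it suffices to verify it for such a frame, and this choice kills several terms.

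The key computation is to unfold $\pa{\conn_{\grad r}\Hess r}(X,Y)$ using the definition of the covariant derivative of a $(0,2)$-tensor given in the background section:
\[
    \pa{\conn_{\grad r}\Hess r}(X,Y) = D_{\grad r}\pa{\Hess r(X,Y)} - \Hess r(\conn_{\grad r}X, Y) - \Hess r(X, \conn_{\grad r}Y).
\]
With $X,Y$ radially parallel the last two terms vanish, so I must compute $D_{\grad r}\scalar{\conn_X\grad r, Y}$. Expanding, $D_{\grad r}\scalar{\conn_X\grad r, Y} = \scalar{\conn_{\grad r}\conn_X\grad r, Y} + \scalar{\conn_X\grad r, \conn_{\grad r}Y}$, and again the second term drops. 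Now I would rewrite $\conn_{\grad r}\conn_X\grad r$ via the curvature tensor: $\conn_{\grad r}\conn_X\grad r = \conn_X\conn_{\grad r}\grad r + R(\grad r, X)\grad r + \conn_{[\grad r, X]}\grad r$. The first term is zero because $\conn_{\grad r}\grad r=0$ (the radial field is geodesic, exactly the equation $\conn_{\partial_r}\grad r=0$ from the background). For the bracket term, since $X$ is radially parallel, $[\grad r, X] = \conn_{\grad r}X - \conn_X\grad r = -\conn_X\grad r$, so $\conn_{[\grad r,X]}\grad r = -\conn_{\conn_X\grad r}\grad r$, and pairing with $Y$ gives $-\scalar{\conn_{\conn_X\grad r}\grad r, Y} = -\Hess r(\conn_X\grad r, Y) = -\Hess^2 r(X,Y)$ by the definition of the iterated Hessian in \eqref{eq:def_iterated_hessian}. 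Collecting terms yields $\pa{\conn_{\grad r}\Hess r}(X,Y) = \scalar{R(\grad r, X)\grad r, Y} - \Hess^2 r(X,Y)$, and with the sign convention $R(X,\grad r,\grad r,Y) = \scalar{R(X,\grad r)\grad r, Y} = -\scalar{R(\grad r,X)\grad r, Y}$ this is exactly \eqref{eq:curv2}.

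For \eqref{eq:curv1}, I would use that for a symmetric $(0,2)$-tensor $S$ and any vector field $Z$ one has $\pa{\lie_Z S}(X,Y) = \pa{\conn_Z S}(X,Y) + S(\conn_X Z, Y) + S(X, \conn_Y Z)$ — this follows by comparing the two definitions (Lie vs.\ covariant) given in the background and using $\lie_Z X = \conn_Z X - \conn_X Z$ from torsion-freeness. Applying this with $Z=\grad r$ and $S=\Hess r$, and using that $\Hess r(\conn_X\grad r, Y) + \Hess r(X,\conn_Y\grad r) = 2\,\Hess^2 r(X,Y)$ by symmetry of the Hessian together with \eqref{eq:def_iterated_hessian}, gives $\pa{\lie_{\grad r}\Hess r}(X,Y) = \pa{\conn_{\grad r}\Hess r}(X,Y) + 2\,\Hess^2 r(X,Y)$. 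Substituting \eqref{eq:curv2} immediately yields \eqref{eq:curv1}. The main obstacle is purely bookkeeping: getting every sign right in the curvature convention and making sure the identifications ($\grad r$ geodesic, $\Hess r = \conn\dif r = \conn_{(\cdot)}\grad r$, the bracket of radially parallel fields) are deployed consistently; there is no analytic difficulty, only the need for a careful choice of test frame so that the manipulation is short.
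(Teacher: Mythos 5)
Your proof is correct, and it rests on the same two ingredients as the paper's: the radial geodesic equation $\conn_{\grad r}\grad r = 0$ and the definition of the curvature tensor applied to $\conn_{\grad r}\conn_X\grad r$, combined with the Leibniz rules for $\conn$ and $\lie$ acting on $(0,2)$-tensors. The organization differs in two ways worth noting. First, you reduce to a radially parallel frame before computing; the paper instead runs the computation for arbitrary $X, Y$, arriving at the common expression $D_{\grad r}\pa{\Hess r(X,Y)} - \Hess r\pa{X, \conn_{\grad r}Y} - \Hess r\pa{\lie_{\grad r}X, Y}$ for $-R(X,\grad r,\grad r,Y)$, and then obtains \eqref{eq:curv1} and \eqref{eq:curv2} by expanding the single term $D_{\grad r}\pa{\Hess r(X,Y)}$ with the Lie and the covariant Leibniz rule respectively. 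Second, you derive \eqref{eq:curv1} from \eqref{eq:curv2} via the general identity $\pa{\lie_Z S}(X,Y) = \pa{\conn_Z S}(X,Y) + S(\conn_X Z, Y) + S(X,\conn_Y Z)$ together with the symmetry of $\Hess^2 r$; this makes explicit that the two equations differ by exactly $2\Hess^2 r$, a relation the paper leaves implicit. Your version buys shorter bookkeeping at the cost of a (routine) tensoriality argument justifying the choice of test frame; the paper's buys a single frame-free computation at the cost of tracking a few more terms. Both are sound, and your sign conventions for $R$ are consistent with the ones used throughout the paper.
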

        where $\Hess^2$ denotes the \emph{iterated Hessian} defined in~\eqref{eq:def_iterated_hessian}.
    \begin{proof}
        Since for a geodesic $\gamma$ we have that $\grad r \vert_\gamma = \dgamma$, the gradient of $r$ has $\conn_{\grad r} \grad r = 0$. From this we get
        \begin{align*}
            - R(X, \grad r, \grad r, Y) &= \scalar{\conn_{\grad r} \conn_X \grad r, Y} + \scalar{\conn_{\lie_X \grad r} \grad r, Y}\\
                                        &= D_{\grad r} \scalar{\conn_X \grad r, Y} - \scalar{\conn_X \grad r, \conn_{\grad r} Y} + \Hess r(\lie_X \grad r, Y)\\
                                        &= D_{\grad r} \pa{\Hess r(X, Y)} - \Hess r\pa{X, \conn_{\grad r} Y}
                                        - \Hess r(\lie_{\grad r} X, Y).
        \end{align*}
        Equation \eqref{eq:curv1} follows after expanding the first term as
        \[
            D_{\grad r} \pa{\Hess r(X,Y)} = \pa{\lie_{\grad r}\Hess r}(X, Y) + \Hess r\pa{\lie_{\grad r}X, Y} + \Hess r\pa{X, \conn_{\grad r} Y - \conn_Y \grad r}
        \]
        and equation \eqref{eq:curv2} follows after expanding it as
        \[
            D_{\grad r} \pa{\Hess r(X,Y)} = \pa{\conn_{\grad r}\Hess r}(X, Y) + \Hess r\pa{\conn_{\grad r}X, Y}  + \Hess r\pa{X, \conn_{\grad r} Y}.\qedhere
        \]
    \end{proof}

    \begin{remark}[A Riccati-type equation]
        Note that the previous proposition holds for every vector field $Y$. As such, we may write it in its $(1,1)$ form
        \[
            \pa{\conn_{\grad r} \Hess r}(X) + \Hess^2 r(X) + R(X, \grad r)\grad r = 0.
        \]
        Denoting by $R_2(X) \defi R(X, \grad r)\grad r$ the $(1,1)$ curvature tensor in this equation, and the shape operator for the geodesic balls as $S \defi \Hess r$, this equation can be rewritten in the radial direction as a Riccati equation on symmetric $(1,1)$ tensors
        \[
            S' + S^2 + R_2 = 0.
        \]
        When restricted to a geodesic, this can be seen as a differential equation on matrices, where $R_2$ is self-adjoint with respect to $\gm$. As one does in one dimension splitting a second order differential equation into a first order equation and a Riccati equation, here we have split the Jacobi equation into first order matrix equation---more formally, an equation on symmetric endomorphisms along the tangent spaces of $\gamma$---and a first order equation for a parallel vector field $E$ of the form
        \[
            \dot{E} = S(E).
        \]
        A careful analysis of this Riccati equation yields another particularly clean proof of Rauch's theorem, at the expense of the use of more abstract methods, as presented in~\parencite{eschenburg1990comparison}.

        In contrast, we will use parallel and Jacobi vector fields to simplify these matrix equations.
    \end{remark}

    \begin{proposition}\label{prop:radial_curvature_evaluated}
        Let $\gamma$ be a geodesic, and let $J$ be a Jacobi field along it such that $J(0) = 0$, $\dot{J} \perp \dgamma(0)$. Let $E$ be a parallel vector field along $\gamma$ that is normal to $\dgamma$. We have the following differential equations along $\gamma$
        \begin{align}
            \frac{\dif}{\dif t}\pa{\Hess r(J, J)}& - \Hess^2 r(J, J) = - \sec(J, \dgamma)\scalar{J, J}\label{eq:curv1_jacobi} \\
            \frac{\dif}{\dif t}\pa{\Hess r(E, E)}& + \Hess^2 r(E, E) = - \sec(E, \dgamma)\label{eq:curv2_parallel}.
        \end{align}
    \end{proposition}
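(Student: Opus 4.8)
The plan is to specialize the radial curvature equations of \Cref{prop:radial_curvature} to the fields $J$ and $E$, using the vanishing of $\lie_{\grad r}J$ along $\gamma$ and of $\conn_{\grad r}E$ along $\gamma$ to collapse the tensor-derivative terms into ordinary derivatives in $t$.

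\textbf{Equation \eqref{eq:curv1_jacobi}.} I would evaluate \eqref{eq:curv1} at $X = Y = J$. Along $\gamma$ one has $\grad r\vert_\gamma = \dgamma$, so $\gamma$ is an integral curve of $\grad r$ and $D_{\grad r}h\vert_\gamma = \frac{\dif}{\dif t}(h\circ\gamma)$ for any function $h$. By \Cref{prop:riccati_jacobi} the Jacobi field satisfies $\lie_{\grad r}J\vert_\gamma = 0$, so from the definition of the Lie derivative of the $(0,2)$-tensor $\Hess r$ together with the symmetry of $\Hess r$,
\[
    \pa{\lie_{\grad r}\Hess r}(J,J) = D_{\grad r}\pa{\Hess r(J,J)} - 2\,\Hess r\pa{\lie_{\grad r}J, J},
\]
whose restriction to $\gamma$ is therefore $\frac{\dif}{\dif t}\pa{\Hess r(J,J)}$. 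On the right-hand side, $R(J,\grad r,\grad r,J)\vert_\gamma = R(J,\dgamma,\dgamma,J)$; since $\dot J(0)\perp\dgamma(0)$, the field $J$ stays orthogonal to $\dgamma$ along $\gamma$ by Gauss's lemma, and $\gamma$ is unit speed, so $R(J,\dgamma,\dgamma,J) = \sec(J,\dgamma)\scalar{J,J}$ by definition of the sectional curvature. Substituting into \eqref{eq:curv1} restricted to $\gamma$ gives \eqref{eq:curv1_jacobi}.

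\textbf{Equation \eqref{eq:curv2_parallel}.} Analogously, I would evaluate \eqref{eq:curv2} at $X = Y = E$, where $E$ is a unit parallel field normal to $\dgamma$, so $\conn_{\grad r}E\vert_\gamma = \conn_{\partial_t}E = 0$ and $\abs{E}$ is constant equal to $1$ along $\gamma$. From the definition of the covariant derivative of $\Hess r$ and symmetry,
\[
    \pa{\conn_{\grad r}\Hess r}(E,E) = D_{\grad r}\pa{\Hess r(E,E)} - 2\,\Hess r\pa{\conn_{\grad r}E, E},
\]
which restricts to $\frac{\dif}{\dif t}\pa{\Hess r(E,E)}$ on $\gamma$, while $R(E,\grad r,\grad r,E)\vert_\gamma = R(E,\dgamma,\dgamma,E) = \sec(E,\dgamma)$ using $E\perp\dgamma$ and $\abs{E} = \abs{\dgamma} = 1$. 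Substituting yields \eqref{eq:curv2_parallel}.

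\textbf{Obstacles.} There is no real difficulty here: the proposition is a direct reading of \Cref{prop:radial_curvature} on well-chosen vector fields. The only points requiring a little care are (i) that restricting $D_{\grad r}$ to $\gamma$ produces $\frac{\dif}{\dif t}$, which holds because $\gamma$ is an integral curve of $\grad r$; (ii) that $J$ remains orthogonal to $\dgamma$ along $\gamma$, so that no $\scalar{J,\dgamma}^2$ correction enters the sectional-curvature identity; and (iii) keeping track of the normalization $\abs{E}=1$, which is preserved along $\gamma$ precisely because $E$ is parallel.
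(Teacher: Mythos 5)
Your proposal is correct and follows exactly the paper's own argument: evaluate the radial curvature equations of \Cref{prop:radial_curvature} on $J$ and $E$, use $\lie_{\grad r}J = 0$ and $\conn_{\grad r}E = 0$ to collapse the tensor derivatives to $\frac{\dif}{\dif t}$ along $\gamma$, and pass from $R(\cdot,\dgamma,\dgamma,\cdot)$ to the sectional curvature via orthogonality to $\dgamma$. The only difference is that you spell out the intermediate identities (and the implicit normalization $\norm{E}=1$) more explicitly than the paper does.
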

    \begin{proof}
        We just evaluate~\eqref{eq:curv1} and~\eqref{eq:curv2} on $J, E$ and use that
        \[
            \lie_{\grad r} J = 0 \qquad \conn_{\grad r} E = 0.
        \]
        To go from the Riemannian tensor to the sectional curvature we just recall that if a Jacobi field or a parallel vector field are normal to $\dgamma$ at one point they are normal to $\dgamma$ at every point.

        Along a geodesic we have that the radial vector field $\grad r$ is just $\dgamma$, and as such we have that $D_{\grad r}\vert_\gamma = D_{\dgamma} = \frac{\dif}{\dif t}$.
    \end{proof}

    It is now a bit clearer how Jacobi fields may be used to give lower bounds and parallel vector fields to give upper bounds on the Hessian of the distance function.

    We will now define some generalized trigonometric functions, which will be useful in the sequel.
    \begin{definition}[Generalized trigonometric functions]
        For a constant  $\kappa \in \RR$, we define the generalized sine function $\sn_\kappa$ as the solution to the differential equation
        \[
            \ddot{x} + \kappa x = 0 \qquad x(0) = 0,\,\dot{x}(0) = 1.
        \]
        In particular, we have
        \[
            \sn_\kappa(t) \defi
            \begin{cases}
                \frac{\sin (\sqrt{\kappa}t)}{\sqrt{\kappa}}    \qquad &\text{if } \kappa > 0\\
                t                                              \qquad &\text{if } \kappa = 0\\
                \frac{\sinh (\sqrt{-\kappa}t)}{\sqrt{-\kappa}} \qquad &\text{if } \kappa < 0
            \end{cases}
        \]
        with its first positive zero being at
        \[
            \pi_\kappa \defi
            \begin{cases}
                \frac{\pi}{\sqrt{\kappa}}  &\qquad \text{if } \kappa > 0 \\
                \infty                     &\qquad \text{if } \kappa \leq 0
            \end{cases}
        \]

        We define the generalized cotangent $\ct_\kappa$ as
        \[
            \ct_\kappa(t)
            \defi
            \frac{\sn'_\kappa(t)}{\sn_\kappa(t)}
            =
            \begin{cases}
              \sqrt{\kappa}\cot(\sqrt{\kappa}t)    &\mathrlap{\qquad \text{if } \kappa > 0}\\
              \frac{1}{t}                          &\mathrlap{\qquad \text{if } \kappa = 0}\\
              \sqrt{-\kappa}\coth(\sqrt{-\kappa}t) &\mathrlap{\qquad \text{if } \kappa < 0}
            \end{cases}
        \]
        This function is smooth on $(0, \pi_\kappa)$.
    \end{definition}

    \begin{remark}
        Note that the equation $\ddot{x} + \kappa x = 0$ is the associated second order equation to the Riccati equation $\dot{x} + x^2 + \kappa = 0$. This is exactly the Riccati equation that the shape operator for the distance function solves in constant curvature, as pointed out before. Given that $\sn_\kappa$ solves the second order equation, $\ct_\kappa$ solves the Riccati equation.

        From an analytical point of view, it is worth noting that the family of functions $\sn_\kappa(t)$ is strictly increasing in $\kappa$ for $t \in (0, \pi_\kappa)$. We also have that $\sn_\kappa$ has a zero at $0$ for every $\kappa$ and another one at $\pi_\kappa$ for $\kappa > 0$, so
        \begin{align*}
            \lim_{t \to 0}\ct_\kappa(t) &=\infty\phantom{-}\mathrlap{\qquad\kappa \in \RR} \\
            \lim_{t \to \pi_\kappa} \ct_\kappa(t) &= -\infty \mathrlap{\qquad\kappa > 0.}
        \end{align*}
    \end{remark}

    All we need to do now is to relate the solutions of the Jacobi equation in Riccati as given by the Hessian of the distance function in~\Cref{prop:radial_curvature_evaluated} with its solutions in constant curvature. To do so, we need the following elementary comparison lemma for functions of real variable. It can be thought of as a version of Sturm's comparison theorem for Riccati equations.
    \begin{lemma}[Comparison Lemma for Riccati Equations]\label{lemma:comparison}
        Let $\deffun{\rho_1, \rho_2 : (a,b) -> \RR;}$ be two differentiable functions such that
        \[
            \dot{\rho}_1 + \rho_1^2 \leq \dot{\rho}_2 + \rho_2^2.
        \]
        If we have that $\rho_1(t_0) \geq \rho_2(t_0)$ for a $t_0 \in (a,b)$, then $\rho_1 \geq \rho_2$ on $(a, t_0]$.
    \end{lemma}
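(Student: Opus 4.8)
The plan is to reduce everything to a single scalar ODE inequality for the difference $\phi \defi \rho_1 - \rho_2$ and then apply an integrating-factor argument, being careful that the comparison runs \emph{backwards} in time from $t_0$. First I would compute, using the hypothesis $\dot\rho_1 + \rho_1^2 \leq \dot\rho_2 + \rho_2^2$,
\[
    \dot\phi = \dot\rho_1 - \dot\rho_2 \leq \rho_2^2 - \rho_1^2 = -(\rho_1 + \rho_2)(\rho_1 - \rho_2) = -g\,\phi,
\]
where $g \defi \rho_1 + \rho_2$. Since $\rho_1, \rho_2$ are differentiable, hence continuous, $g$ is continuous on $(a,b)$, so the integrating factor $\mu(t) \defi \exp\pa[\big]{\int_{t_0}^t g(s)\,\dif s}$ is well defined, smooth, and strictly positive on $(a,b)$, with $\mu(t_0) = 1$.

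Next I would observe that $\mu\phi$ is differentiable with
\[
    \frac{\dif}{\dif t}\pa{\mu\phi} = \mu\pa{\dot\phi + g\,\phi} \leq 0,
\]
so $\mu\phi$ is non-increasing on $(a,b)$ (this uses only the sign of the derivative, via the mean value theorem, and does not require continuity of $\dot\rho_i$). Evaluating at $t_0$ gives $\pa{\mu\phi}(t_0) = \phi(t_0) = \rho_1(t_0) - \rho_2(t_0) \geq 0$ by assumption. Because $\mu\phi$ is non-increasing, for every $t \in (a, t_0]$ we have $\pa{\mu\phi}(t) \geq \pa{\mu\phi}(t_0) \geq 0$, and dividing by $\mu(t) > 0$ yields $\phi(t) \geq 0$, that is, $\rho_1(t) \geq \rho_2(t)$ on $(a, t_0]$.

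There is essentially no hard step here; the only point that needs care is the direction of the inequality. The Riccati structure forces the "error" $\phi$ to satisfy a linear differential inequality whose integrating-factor solution $\mu\phi$ is monotone, but monotonicity propagates the sign of $\phi$ only \emph{towards} smaller $t$ from the initial point $t_0$, which is exactly why the conclusion is stated on $(a, t_0]$ rather than on all of $(a,b)$. I would therefore emphasize this asymmetry in the write-up and note that the companion forward statement would instead require $\rho_1(t_0) \leq \rho_2(t_0)$.
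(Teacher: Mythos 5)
Your proof is correct and is essentially the paper's argument in different clothing: the paper converts the inequality to an equality with a non-negative slack term and solves by variation of parameters with homogeneous solution $e^{-\int(\rho_1+\rho_2)}$, whereas you multiply the differential inequality directly by the reciprocal integrating factor and observe that the product is non-increasing --- the two computations coincide term for term. Your handling of the backwards-in-time direction of the comparison is also exactly the point the paper's proof relies on.
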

    \begin{proof}
        Let $u = \rho_1 - \rho_2$ and $s = \dot{\rho}_2 - \dot{\rho}_1 + \rho_2^2 - \rho_1^2$. We have by hypothesis that $u(t_0) \geq 0$ and $s \geq 0$ on $(a,b)$, and it is enough to prove that $u \geq 0$ on $(a, t_0]$. We can write the differential inequality as a differential equation in terms of $u$ and $s$ as
        \[
            \dot{u} = -(\rho_1 + \rho_2)u -s.
        \]
        and we can solve this differential equation using the method of variation of parameters. We compute the general solution of the homogeneous system
        \[
            \dot{v} = -(\rho_1 + \rho_2)v
        \]
        as $v = Cv_1$ for $v_1 \defi e^{-\int \rho_1 + \rho_2} > 0$ and a constant $C \in \RR$.

        To get a particular solution to the inhomogeneous equation, we let $u = \eta v_1$ and, plugging it into the inhomogeneous equation, we see that the function $\eta$ satisfies $\dot{\eta} = -sv_1^{-1}$. As $s$ and $v_1^{-1}$ are positive, we conclude that $\eta$ is decreasing.

        Finally, we get the general solution by adding the particular solution and the general solution to the homogeneous system $u = v_1(C + \eta)$. Since, by hypothesis, $u(t_0) \geq 0$, we get that $C + \eta(t) \geq 0$ for $t \in (a, t_0]$ so that $u(t) \geq 0$ on $(a, t_0]$.
    \end{proof}
    \begin{proposition}[Riccati Comparison Estimate]\label{prop:riccati_comparison_estimate}
        Let $\deffun{\rho : (0, b) -> \RR;}$ be a differentiable function and fix a $\kappa \in \RR$.
        \begin{enumerate}
        \item
            If $\dot{\rho} + \rho^2 \leq -\kappa$ then $\rho(t) \leq \ct_\kappa(t)$ on $(0,b)$. Furthermore, $b \leq \pi_\kappa$

        \item
            If $\dot{\rho} + \rho^2 \geq -\kappa$ and $\lim\limits_{t \to 0}\rho(t) = \infty$ then $\ct_\kappa(t) \leq \rho(t)$ on $(0, \min\set{b, \pi_\kappa})$.
        \end{enumerate}
    \end{proposition}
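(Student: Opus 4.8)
\emph{Strategy.} The plan is to derive both claims from the Comparison Lemma~\Cref{lemma:comparison}, using the facts recorded above that $\ct_\kappa$ is smooth on $(0,\pi_\kappa)$, solves the Riccati equation $\dot{y} + y^2 + \kappa = 0$ there, satisfies $\ct_\kappa(t) \to +\infty$ as $t \to 0^+$, and satisfies $\ct_\kappa(t) \to -\infty$ as $t \to \pi_\kappa^-$ when $\kappa > 0$. The difficulty in applying~\Cref{lemma:comparison} directly is the endpoint $t = 0$: both $\rho$ and $\ct_\kappa$ may tend to $+\infty$ there, so there is no point at which the hypothesis ``$\rho_1(t_0) \ge \rho_2(t_0)$'' of the lemma can be seeded. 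I would circumvent this by comparing $\rho$ with a \emph{translate} of $\ct_\kappa$: since the Riccati equation is autonomous, $\sigma_\epsilon(t) \defi \ct_\kappa(t - \epsilon)$ and $\tau_\epsilon(t) \defi \ct_\kappa(t + \epsilon)$ are still exact solutions, but the right translate is finite precisely where $\rho$ blows up, or blows up where $\rho$ stays finite. That mismatch produces a contradiction unless the sought inequality already holds, and then I let $\epsilon \to 0^+$.

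\emph{First part.} Fix $\epsilon \in (0,b)$; then $\sigma_\epsilon$ is smooth on $(\epsilon,\pi_\kappa+\epsilon)$ with $\dot{\sigma}_\epsilon + \sigma_\epsilon^2 = -\kappa$. On the common domain $I_\epsilon \defi (\epsilon, \min\set{b, \pi_\kappa + \epsilon})$ we have $\dot{\rho} + \rho^2 \le -\kappa = \dot{\sigma}_\epsilon + \sigma_\epsilon^2$, so if there were a $t_0 \in I_\epsilon$ with $\rho(t_0) \ge \sigma_\epsilon(t_0)$, then~\Cref{lemma:comparison} (with $\rho_1 = \rho$, $\rho_2 = \sigma_\epsilon$) would give $\rho \ge \sigma_\epsilon$ on $(\epsilon, t_0]$ --- impossible, since $\sigma_\epsilon(t) \to +\infty$ as $t \to \epsilon^+$ while $\rho$ is continuous, hence bounded, near $\epsilon \in (0,b)$. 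Thus $\rho(t) < \ct_\kappa(t - \epsilon)$ throughout $I_\epsilon$. For a fixed $t \in (0, \min\set{b, \pi_\kappa})$ every $\epsilon \in (0,t)$ has $t \in I_\epsilon$, so letting $\epsilon \to 0^+$ and using continuity of $\ct_\kappa$ at $t$ gives $\rho(t) \le \ct_\kappa(t)$. For the bound $b \le \pi_\kappa$ (only at stake when $\kappa > 0$): if $b > \pi_\kappa$, the inequality just proved would hold on all of $(0,\pi_\kappa)$, but $\rho$ is differentiable --- hence finite and continuous --- at $\pi_\kappa \in (0,b)$, whereas $\ct_\kappa(t) \to -\infty$ as $t \to \pi_\kappa^-$, a contradiction.

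\emph{Second part.} Fix $\epsilon \in (0,\pi_\kappa)$; then $\tau_\epsilon$ is smooth and bounded near $t = 0$, with $\dot{\tau}_\epsilon + \tau_\epsilon^2 = -\kappa$. On $J_\epsilon \defi (0, \min\set{b, \pi_\kappa - \epsilon})$ we have $\dot{\tau}_\epsilon + \tau_\epsilon^2 = -\kappa \le \dot{\rho} + \rho^2$, so a $t_0 \in J_\epsilon$ with $\tau_\epsilon(t_0) \ge \rho(t_0)$ would, by~\Cref{lemma:comparison} (with $\rho_1 = \tau_\epsilon$, $\rho_2 = \rho$), force $\tau_\epsilon \ge \rho$ on $(0, t_0]$, contradicting $\rho(t) \to \infty$ as $t \to 0^+$ while $\tau_\epsilon$ stays bounded. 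Hence $\ct_\kappa(t + \epsilon) < \rho(t)$ on $J_\epsilon$, and letting $\epsilon \to 0^+$ yields $\ct_\kappa(t) \le \rho(t)$ for every $t \in (0, \min\set{b, \pi_\kappa})$.

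\emph{Main obstacle.} I expect the only genuine work --- beyond carefully keeping track of the common intervals of definition --- to be this endpoint behaviour at $0$ and at $\pi_\kappa$; the translation trick isolates it, and the rest reduces to a direct appeal to~\Cref{lemma:comparison} together with the autonomy of the Riccati equation solved by $\ct_\kappa$ and its translates.
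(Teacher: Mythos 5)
Your proof is correct and follows essentially the same route as the paper: both arguments compare $\rho$ against an $\epsilon$-translate of $\ct_\kappa$ (an exact Riccati solution, by autonomy) via~\Cref{lemma:comparison} and derive a contradiction from the mismatch in blow-up behaviour at the left endpoint. The only difference is presentational --- the paper seeds the contradiction at a single failure point and picks $\epsilon$ afterwards, while you prove the shifted inequality for every $\epsilon$ and pass to the limit --- and your treatment of the second part makes explicit the translation in the opposite direction that the paper leaves implicit in ``interchanging the roles.''
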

    \begin{proof}
        For the first case, assume that $\rho\pa{t_0} > \ct_\kappa(t_0)$ for some $t_0 \in (0, b)$. By continuity, we can choose an $\epsilon > 0$ such that
        \[
            \rho\pa{t_0} \geq \ct_\kappa(t_0-\epsilon).
        \]
        By~\Cref{lemma:comparison}, since $\ct_\kappa$ solves the differential equation $\dot{x} + x^2 + \kappa = 0$, we have that this inequality holds for $t \in (\epsilon, t_0)$, but the right hand side goes to infinity as $t \to \epsilon$. In this case we also have that $b \leq \pi_\kappa$, as the comparison function goes to $-\infty$ as $t$ goes to $\pi_\kappa$.

        Interchanging the roles of $\rho$ and $\ct_\kappa$ in the previous proof we get the second inequality. In this case we need $t \in (0, \min\pa{b, \pi_\kappa})$ for $\ct_\kappa$ to be well-defined as $\ct_\kappa \to -\infty$ when $t$ goes to $\pi_\kappa$ for $\kappa > 0$.
    \end{proof}

    We now have everything we need to prove Rauch's theorem.

    \begin{theorem}[Rauch's theorem]\label{thm:rauch}
        Let $(M, \gm)$ be a Riemannian manifold with bounded sectional curvature $\delta \leq \sec \leq \Delta$. Fix a point $p \in M$ and define $r$ as the distance to $p$. For any other point in a ball $x \in B_p(\pi_\Delta) \backslash \set{p}$, and any $w_1, w_2 \in T_xM$ we have
        \[
            \ct_\Delta(r(x))\scalar{\normal{w}_1, \normal{w}_2}
            \leq
            \pa{\Hess r}_x(w_1, w_2)
            \leq
            \ct_\delta(r(x))\scalar{\normal{w}_1, \normal{w}_2}.
        \]
        The upper bound also holds for every $x \in M \backslash \pa{\cut(p) \cup \set{p}}$. These bounds are tight on spaces of constant curvature.
    \end{theorem}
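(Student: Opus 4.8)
The plan is to prove the two inequalities separately, in each case reducing the appropriate radial curvature equation of \Cref{prop:radial_curvature_evaluated} to a scalar Riccati differential inequality and then invoking \Cref{prop:riccati_comparison_estimate}. First I would reduce to the essential case: since $\conn_{\grad r}\grad r = 0$ we have $\Hess r(\grad r, \cdot) = 0$, so by bilinearity $\Hess r(w_1,w_2) = \Hess r(\normal{w}_1, \normal{w}_2)$, and it suffices to bound the quadratic form $\Hess r(w,w)$ for unit vectors $w$ normal to $\grad r$; the stated bilinear inequality then follows by reading these bounds as an operator inequality on the normal space. Fix such an $x$ and let $\gamma\colon[0, r(x)] \to M$ be the minimizing unit-speed geodesic from $p$ to $x$, noting that its interior points lie in $\seg \setminus \set{p}$, where $r$ is smooth, as does $x$ itself when $x \notin \cut(p)$.

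For the upper bound I would take the parallel vector field $E$ along $\gamma$ normal to $\dgamma$ with $E(r(x)) = w$; parallel transport preserves $\norm{E} \equiv 1$. Put $\psi(t) \defi \Hess r(E,E)(\gamma(t))$. By \eqref{eq:curv2_parallel}, $\psi' = -\sec(E,\dgamma) - \Hess^2 r(E,E)$, and Cauchy--Schwarz gives $\Hess^2 r(E,E) = \norm{\Hess r(E)}^2 \geq \scalar{\Hess r(E), E}^2 = \psi^2$, while $\sec(E,\dgamma) \geq \delta$. Hence $\psi' + \psi^2 \leq -\delta$ on $(0, r(x))$, and \Cref{prop:riccati_comparison_estimate}(1) with $\kappa = \delta$ gives $\psi(t) \leq \ct_\delta(t)$ there; passing to the limit $t \to r(x)$ yields $\Hess r(w,w) \leq \ct_\delta(r(x))$. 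This argument uses only that $r$ is smooth along $\gamma$, so it holds for every $x \in M \setminus (\cut(p) \cup \set{p})$.

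For the lower bound I would instead use the Jacobi field $J$ along $\gamma$ with $J(0) = 0$, $\dot J(0) \perp \dgamma(0)$, chosen so that $J(r(x))$ is a positive multiple of $w$ (possible since $x \notin \cut(p) \supseteq \conjloc(p)$ means $x$ is not conjugate to $p$). On the maximal interval where $J \neq 0$, set $\phi \defi \frac{\dif}{\dif t}\log\norm{J} = \Hess r(J,J)/\norm{J}^2$, using $\scalar{\dot J, J} = \Hess r(J,J)$ from \eqref{eq:riccati}. Writing $g = \Hess r(J,J)$ and $f = \norm{J}^2$ one has $f' = 2g$ and, by \eqref{eq:curv1_jacobi} together with Cauchy--Schwarz ($\Hess^2 r(J,J) = \norm{\Hess r(J)}^2 \geq g^2/f$) and $\sec \leq \Delta$, the inequality $g' \geq g^2/f - \Delta f$; substituting into $\phi' = g'/f - 2\phi^2$ gives $\phi' + \phi^2 \geq -\Delta$. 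Since $J(t) = t\dot J(0) + O(t^3)$ near $0$, we have $\phi(t) \to +\infty$ as $t \to 0^+$, so \Cref{prop:riccati_comparison_estimate}(2) with $\kappa = \Delta$ gives $\phi(t) \geq \ct_\Delta(t)$ on $(0, \min\set{b, \pi_\Delta})$, where $b$ is the first zero of $J$. A short bootstrap --- if $J$ vanished at some $t_1 \leq r(x) < \pi_\Delta$ then $\phi(t) \to -\infty$ as $t \to t_1^-$, contradicting $\phi(t) \geq \ct_\Delta(t) > -\infty$ near $t_1$ --- forces $b > r(x)$, and evaluating at $t = r(x)$ gives $\Hess r(w,w) \geq \ct_\Delta(r(x))$ for $x \in B_p(\pi_\Delta) \setminus \set{p}$.

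Finally, when $\delta = \Delta = \kappa$ both estimates collapse to equalities: the Jacobi field is then $J(t) = \sn_\kappa(t)E(t)$ for a parallel unit normal $E$, so $\phi(t) = \sn'_\kappa(t)/\sn_\kappa(t) = \ct_\kappa(t)$ exactly, which proves tightness on spaces of constant curvature. The step I expect to be the main obstacle is the lower bound: one must correctly descend from the tensorial identity \eqref{eq:curv1_jacobi} to a scalar Riccati inequality (dividing by $\norm{J}^2$ and applying Cauchy--Schwarz at the right place), and then close the circularity inherent in defining $\phi$ only where $J \neq 0$ by ruling out conjugate points inside $B_p(\pi_\Delta)$ via the bootstrap above. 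The upper bound is comparatively painless, since parallel fields never vanish and \Cref{prop:riccati_comparison_estimate}(1) needs no initial condition.
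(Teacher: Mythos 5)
Your proof follows essentially the same route as the paper's: the radial/normal reduction, a parallel field with \eqref{eq:curv2_parallel} and part (1) of \Cref{prop:riccati_comparison_estimate} for the upper bound, and a Jacobi field with \eqref{eq:curv1_jacobi}, Cauchy--Schwarz on the iterated Hessian, and part (2) of the Riccati comparison (with the $\rho \to \infty$ initial condition) for the lower bound. It is correct --- indeed slightly more careful than the paper in making the Cauchy--Schwarz step explicit on the parallel-field side and in ruling out zeros of $J$ before $\pi_\Delta$ --- so there is nothing to add.
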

    \begin{proof}
        First, note that since $\Hess r \vert_x$ is a symmetric bilinear form, it is diagonalizable and it will achieve its maximum and minimum at certain eigenvectors. As such, we just have to prove the result for $w_1 = w_2$.

        Consider a distance minimizing geodesic $\deffun{\gamma :[0,r] -> M;}$ connecting $p$ and $x$. For a radial vector $w = \norm{w}\dgamma(r)\in T_xM$ we evaluate the Hessian on the vector field $\norm{w}\dgamma$ using that $\grad r\vert_\gamma = \dgamma$
        \[
            \Hess r(\norm{w}\dgamma, \norm{w}\dgamma) = \norm{w}^2 \scalar{\conn_{\dgamma} \dgamma, \dgamma} = 0.
        \]
        So the Hessian of the distance function is zero in the radial direction.

        By linearity, it is enough to prove the theorem for a unitary vector $w$ normal to $\dgamma(r)$.

        For the rest of the bounds, we will take advantage of the simple form that the curvature equations take when evaluated on Jacobi and parallel vector fields, as computed in~\Cref{prop:radial_curvature_evaluated}.

        For the lower bound we consider a Jacobi field such that $J(r) = w$ (\ie, $J(0) = 0$ and $\dot{J}(0) = \frac{w}{r}$) and define $\rho(t) = \Hess r(\frac{J}{\norm{J}}, \frac{J}{\norm{J}})$. We can compute its derivative as
        \[
            \dot{\rho}
            = \frac{\dif}{\dif t} \frac{\Hess r(J, J)}{\scalar{J, J}}
            = \frac{\frac{\dif}{\dif t}\Hess r(J, J) \norm{J}^2 - 2\Hess r(J, J)^2}{\norm{J}^4}.
        \]
        Plugging the formula for the derivative of the Hessian into~\eqref{eq:curv1_jacobi} we get
        \[
            \dot{\rho} + 2\rho^2 - \Hess^2 r\pa[\Big]{\frac{J}{\norm{J}},\frac{J}{\norm{J}}} = -\sec(J, \dgamma).
        \]
        Using Cauchy-Schwarz, we can bound the iterated Hessian for any vector field $X$ of norm $1$
        \[
            \Hess^2 r(X,X) = \scalar{\conn_X\grad r, \conn_X\grad r} \geq \scalar{\conn_X \grad r, X}^2 = \Hess r(X,X)^2
        \]
        and together with the upper bound on the sectional curvature we get the expected differential inequality
        \[
            \dot{\rho} + \rho^2 \geq - \Delta.
        \]
        Finally, since $\norm{J(0)} = 0$ and $\norm{\dot{J}(0)} \neq 0$, we have that $\lim\limits_{t \to 0} \rho(t) = \infty$, so we may use the second part of~\Cref{prop:riccati_comparison_estimate} to finish the lower bound.

        For the upper bound consider a parallel vector field $E$ such that $E(r) = w$ and define $\rho = \Hess r(E, E)$. Using~\eqref{eq:curv2_parallel} we get that
        \[
            \dot{\rho} + \rho^2 = -\sec(E, \dgamma) \leq -\delta
        \]
        and we finish by applying the first part of~\Cref{prop:riccati_comparison_estimate}.
    \end{proof}

    \begin{remark}
        For the upper bound, we never used the fact that $x \not\in \cut(p)$. We merely used that the geodesic $\gamma$ does not have any conjugate points on $[0,r]$, to be able to use~\Cref{prop:riccati_comparison_estimate}, as if $\norm{J(t)} = 0$, then the Hessian of the distance function at that point would be infinite.

	As stated, the theorem is as general as it can be, as the distance function $r$ is not differentiable on $\cut(p)$. On the other hand, if it is stated in terms of Jacobi fields, one can further generalize it to Jacobi fields along geodesics without conjugate points, since both the proof of the theorem and that of~\Cref{prop:radial_curvature} can be done in terms of a geodesic and short variations of geodesics. We will use this more general version in the following theorem.
    \end{remark}

    \begin{theorem}[First order bounds for the exponential map]\label{thm:first_order_bounds}
        Let $(M, \gm)$ be a Riemannian manifold with bounded sectional curvature $\delta \leq \sec \leq \Delta$. Fix a point $p \in M$, for any unit vector $v\in T_pM$ and an $r \in [0, \pi_\Delta]$,
        \[
            \min\set[\Big]{1, \frac{\sn_\Delta(r)}{r}}\norm{w}
            \leq
            \norm{\pa{\dif \exp_p}_{rv}(w)}
            \leq
            \max\set[\Big]{1, \frac{\sn_\delta(r)}{r}}\norm{w} \mathrlap{\qquad \forall w \in T_pM.}
        \]
        The upper bound also holds for $r < r_{\conjloc}(v)$.

        These bounds are tight on spaces of constant curvature.
    \end{theorem}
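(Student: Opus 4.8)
The statement is, in the end, a corollary of Rauch's theorem (\Cref{thm:rauch}) once it is recast in terms of Jacobi fields, so the plan is to set up that translation carefully and then integrate. Fix a unit vector $v \in T_pM$, let $\deffun{\gamma : [0,r] -> M;}$ be the geodesic with $\gamma(0) = p$, $\dgamma(0) = v$, and for $w \in T_pM$ let $J$ be the Jacobi field along $\gamma$ with $J(0) = 0$ and $\dot{J}(0) = w$, so that by the discussion following \Cref{prop:jacobi_equation} one has $\pa{\dif \exp_p}_{rv}(w) = J(r)/r$ (the case $r = 0$ being trivial). I would decompose $w = \paral{w} + \normal{w}$ with $\paral{w} = \scalar{w,v}v$, and correspondingly, by linearity of the Jacobi equation, $J = \paral{J} + \normal{J}$, where $\paral{J}(t) = t\scalar{w,v}\dgamma(t)$ is the radial solution and $\normal{J}$ is the Jacobi field with zero value and initial velocity $\normal{w}$ at $p$, which by Gauss's lemma stays orthogonal to $\dgamma$ on all of $[0,r]$. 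Then $\pa{\dif \exp_p}_{rv}(\paral{w}) = \scalar{w,v}\dgamma(r)$ is radial of norm $\norm{\paral{w}}$, while $\pa{\dif \exp_p}_{rv}(\normal{w}) = \normal{J}(r)/r$ is orthogonal to $\dgamma(r)$, so by the Pythagorean theorem
\[
    \norm{\pa{\dif \exp_p}_{rv}(w)}^2 = \norm{\paral{w}}^2 + \frac{1}{r^2}\norm{\normal{J}(r)}^2,
\]
and the whole problem reduces to estimating $\norm{\normal{J}(r)}$.

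For that I would run the argument already used in \Cref{thm:rauch}: on any subinterval of $(0,r]$ on which $\normal{J}$ does not vanish one has $\frac{\dif}{\dif t}\log\norm{\normal{J}} = \Hess r\pa[\big]{\normal{J}/\norm{\normal{J}}, \normal{J}/\norm{\normal{J}}}$, and since $\normal{J}/\norm{\normal{J}}$ is a unit vector normal to $\dgamma$, the Jacobi-field form of Rauch's theorem (\Cref{thm:rauch} as extended in the remark following it) gives $\ct_\Delta(t) \le \frac{\dif}{\dif t}\log\norm{\normal{J}} \le \ct_\delta(t)$, i.e.\ $\frac{\dif}{\dif t}\log\sn_\Delta(t) \le \frac{\dif}{\dif t}\log\norm{\normal{J}} \le \frac{\dif}{\dif t}\log\sn_\delta(t)$. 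Integrating from $\epsilon$ to $r$ and letting $\epsilon \to 0$, using that $\normal{J}(t)/t = \pa{\dif \exp_p}_{tv}(\normal{w}) \to \normal{w}$ and hence $\norm{\normal{J}(t)}/\sn_\kappa(t) \to \norm{\normal{w}}$ as $t \to 0$, yields $\sn_\Delta(r)\norm{\normal{w}} \le \norm{\normal{J}(r)} \le \sn_\delta(r)\norm{\normal{w}}$. Plugging this into the Pythagorean identity and using $\norm{w}^2 = \norm{\paral{w}}^2 + \norm{\normal{w}}^2$ together with $\min\set{1,a}^2 = \min\set{1,a^2}$ and $\max\set{1,a}^2 = \max\set{1,a^2}$ for $a \ge 0$, one obtains
\[
    \min\set[\Big]{1, \tfrac{\sn_\Delta(r)}{r}}^2\norm{w}^2 \le \norm{\pa{\dif \exp_p}_{rv}(w)}^2 \le \max\set[\Big]{1, \tfrac{\sn_\delta(r)}{r}}^2\norm{w}^2,
\]
and taking square roots finishes the estimate.

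The point requiring care — and the only real obstacle — is legitimizing the logarithmic-derivative computation, i.e.\ showing that $\normal{J}$ has no zeros on the relevant interval. For the upper bound this is exactly the hypothesis $r < r_{\conjloc}(v)$: absence of conjugate points along $\gamma\vert_{[0,r]}$ is precisely what allows invoking the Jacobi-field form of \Cref{thm:rauch} there. For the lower bound it is where $r \le \pi_\Delta$ enters: if $t_0$ were the first zero of $\normal{J}$ inside $(0,\pi_\Delta)$, then applying the second part of \Cref{prop:riccati_comparison_estimate} to $\rho = \Hess r\pa[\big]{\normal{J}/\norm{\normal{J}}, \cdot}$ on $(0,t_0)$ (where $\dot{\rho} + \rho^2 \ge -\Delta$ and $\rho \to \infty$ as $t \to 0$) gives $\norm{\normal{J}(t)} \ge \sn_\Delta(t)\norm{\normal{w}} > 0$ on $(0,t_0)$, contradicting continuity at $t_0$; and at the endpoint $t = \pi_\Delta$ the asserted bound $\norm{\normal{J}(\pi_\Delta)} \ge \sn_\Delta(\pi_\Delta)\norm{\normal{w}} = 0$ is vacuous, consistently with $\min\set{1,\sn_\Delta(\pi_\Delta)/\pi_\Delta} = 0$. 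Finally, tightness on a space of constant curvature $\delta = \Delta =: \kappa$ is immediate: there every Rauch inequality is an equality, $\normal{J}(t) = \sn_\kappa(t)E(t)$ for the parallel transport $E$ of $\normal{w}/\norm{\normal{w}}$, so $\norm{\pa{\dif \exp_p}_{rv}(w)}^2 = \norm{\paral{w}}^2 + (\sn_\kappa(r)^2/r^2)\norm{\normal{w}}^2$ realizes the upper and lower bounds at the extreme configurations $\paral{w} = 0$ and $\normal{w} = 0$.
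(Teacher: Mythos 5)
Your proposal is correct and follows essentially the same route as the paper's proof: reduce to the Jacobi field $J(t) = \pa{\dif\exp_p}_{tv}(tw)$, split off the radial part via Gauss's lemma, identify $\frac{\dif}{\dif t}\log\norm{\normal{J}}$ with $\Hess r$ evaluated on the normalized field, apply \Cref{thm:rauch}, and integrate using l'H\^opital at $t=0$. You are somewhat more explicit than the paper about combining the radial and normal estimates via the Pythagorean identity and about the non-vanishing of $\normal{J}$, but these are refinements of the same argument rather than a different one.
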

    \begin{proof}
        The bound by $1$ above and below comes from the Gauss's lemma, as the exponential is a radial isometry. From this, and using linearity, we just have to prove the bound for a vector $w$ in the normal direction with $\norm{w} = 1$.

        All we have to do now is to transform the bounds on the Riccati equation on bounds on the Jacobi equation. Define $\gamma$ as the geodesic starting at $p$ with $\dgamma(0) = \frac{v}{\norm{v}}$, and choose a Jacobi field along $\gamma$ such that $J(r) = w$, that is, choose $\dot{J}(0) = \frac{w}{r}$. As we noted at the beginning of this section, we have that
        \[
        \Hess r\pa[\Big]{\frac{J}{\norm{J}}, \frac{J}{\norm{J}}}
        = \frac{\scalar{\dot{J}, J}}{\norm{J}^2}
        = \frac{\lfrac{\dif}{\dif t}\norm{J}}{\norm{J}}
        = \frac{\dif}{\dif t}\log\pa{\norm{J}}
        \]
        so Rauch's theorem may be rewritten on $(0, \pi_\Delta)$ as
        \[
            \frac{\dif}{\dif t}\log\pa{\sn_\Delta(t)}
            \leq
            \frac{\dif}{\dif t}\log\pa{\norm{J}}
            \leq
            \frac{\dif}{\dif t}\log\pa{\sn_\delta(t)}.
        \]
        Integrating, using that $\norm{J(0)} = \sn_{\Delta}(0) = \sn_{\delta}(0) = 0$, and computing the limit using l'Hopital, we get
		\[
            \sn_\Delta(t) \leq \norm{J} \leq \sn_\delta(t).
		\]
		and since $J(r) = \pa{\dif \exp_p}_{rv}(rw)$,
		\[
            \norm{\pa{\dif \exp_p}_{rv}(w)} = \frac{\norm{J(r)}}{r}.
		\]

        As it was the case in the proof of Rauch's theorem, we just used that $\gamma$ does not have conjugate points on $[0,r]$, rather than the stronger $rv \in \tseg$. As such, just by noting that the proof does not rely on the definition of $\Hess r$, and instead, can be carried out in terms of Jacobi fields, we get the finer result for the upper bound.
    \end{proof}

    \begin{example}
        To see how the definition of the upper bounds are an improvement over just considering $rv \in \tseg$, consider the flat torus. For this manifold, $\tseg \subset T_pM$ is a square centered at $0$. On the other hand, since for the flat torus we have that $\conjloc(p) = \emptyset$, we get that the upper bound holds on all $T_pM$ for every point $p$.
    \end{example}

    We now have as a corollary of the lower bounds the following strengthening of the Cartan-Hadarmard theorem that was implicitly present in the statement of the last theorem.
    \begin{theorem}\label{thm:generalization_cartan_hadamard}
        If $(M, \gm)$ has $\sec \leq \Delta$, then the exponential map $\exp_p$ is not singular on $B_p\pa{ \pi_\Delta}$, so $B_p\pa{\pi_\Delta} \subset M \backslash \conjloc(p)$ and $\pi_\Delta \leq r_{\conjloc}(v)$ for every point $p \in M$ and every unit vector $v \in T_pM$.
    \end{theorem}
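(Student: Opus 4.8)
The statement is an immediate corollary of the lower bound in \Cref{thm:first_order_bounds}, so the plan is simply to unwind that bound. Fix a point $p$, a unit vector $v\in T_pM$, and a radius $r\in[0,\pi_\Delta)$; the goal is to show that $\pa{\dif\exp_p}_{rv}$ is nonsingular. Since this is a linear map between two spaces of the same finite dimension, it suffices to prove that its kernel is trivial. For $r=0$ this is clear because $\pa{\dif\exp_p}_0$ is the identity. For $r\in(0,\pi_\Delta)$, take any nonzero $w\in T_pM$; by \Cref{thm:first_order_bounds} we have $\norm{\pa{\dif\exp_p}_{rv}(w)}\ge\min\set{1,\sn_\Delta(r)/r}\norm{w}$, and since $\pi_\Delta$ is by definition the first positive zero of $\sn_\Delta$, the quantity $\sn_\Delta(r)$ is strictly positive on $(0,\pi_\Delta)$. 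Hence $\min\set{1,\sn_\Delta(r)/r}>0$ and $\pa{\dif\exp_p}_{rv}(w)\ne0$, so the kernel is trivial.

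From here the three assertions follow by bookkeeping. By the definition of the conjugate locus, a point $\exp_p(u)$ lies in $\conjloc(p)$ precisely when $\pa{\dif\exp_p}_u$ fails to be full rank, so the previous paragraph shows that no such $u$ has $\norm{u}<\pi_\Delta$; equivalently, $\exp_p$ is nonsingular on the open $\pi_\Delta$-ball of $T_pM$. Because $(M,\gm)$ is complete and connected, every $x$ with $d(x,p)<\pi_\Delta$ is joined to $p$ by a minimizing geodesic and is therefore $\exp_p(u)$ for some $u$ with $\norm{u}=d(x,p)<\pi_\Delta$, so $B_p(\pi_\Delta)\subseteq M\backslash\conjloc(p)$. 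Finally, since $\exp_p(rv)$ is nonconjugate to $p$ for every $r\in[0,\pi_\Delta)$, the definition of $r_{\conjloc}(v)$ as the smallest radius at which a conjugate point occurs forces $r_{\conjloc}(v)\ge\pi_\Delta$. When $\Delta\le0$ we have $\pi_\Delta=\infty$ and the statement reduces to a sharpening of the Cartan--Hadamard theorem.

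The only delicate point — and the one I would be most careful about — is a possible appearance of circularity: the lower bound in \Cref{thm:first_order_bounds} is obtained by integrating the log-derivative estimate $\tfrac{\dif}{\dif t}\log\norm{J}\ge\ct_\Delta(t)$ for a Jacobi field $J$ with $J(0)=0$, which seems to presuppose $\norm{J}>0$ on $(0,r)$, i.e. exactly the nonvanishing we wish to conclude. This is not genuinely circular: the Riccati/Sturm comparison behind \Cref{prop:riccati_comparison_estimate} — equivalently, the one-line inequality $\pa{\norm{J}'\sn_\Delta-\norm{J}\sn_\Delta'}'\ge0$, which follows from $\norm{J}''+\Delta\norm{J}\ge0$, itself a consequence of $\sec\le\Delta$ together with Cauchy--Schwarz — shows directly that $\norm{J(t)}/\sn_\Delta(t)$ is nondecreasing on all of $(0,\pi_\Delta)$ with positive limit at $0$, so that $\norm{J(t)}\ge\norm{\dot J(0)}\,\sn_\Delta(t)>0$ there; the nonvanishing is an output of the estimate, not an input. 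In writing the proof I would either invoke this as already contained in the proof of \Cref{thm:first_order_bounds} or re-derive the inequality $\norm{J(t)}\ge\norm{\dot J(0)}\,\sn_\Delta(t)$ on $(0,\pi_\Delta)$ from scratch so that the logical order is unambiguous; everything else is routine.
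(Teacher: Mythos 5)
Your proposal is correct and follows exactly the route the paper intends: the theorem is stated there as an immediate corollary of the lower bound in \Cref{thm:first_order_bounds}, and your unwinding (positivity of $\sn_\Delta$ on $(0,\pi_\Delta)$ gives injectivity of $\pa{\dif\exp_p}_{rv}$, then the bookkeeping for $\conjloc(p)$ and $r_{\conjloc}(v)$) is precisely that argument. Your remark on the apparent circularity is a worthwhile clarification, and your resolution — that the comparison estimate outputs the nonvanishing of $\norm{J}$ rather than assuming it — is consistent with how the paper's proof of \Cref{thm:first_order_bounds} is structured.
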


    \subsection{The law of cosines on manifolds}\label{sec:law_of_cosines}
    Rauch's theorem has found countless applications in geometry, as those given by Rauch himself, who proved with it a weak version of the sphere theorem, or as used by Gromov or Karcher to give bounds the volume form of a Riemannian manifold, or to control de size of the fundamental group of a given manifold. Here we deviate slightly from the presentation to remark one that has been found particularly useful in the context of optimization on manifolds.

    \begin{theorem}[Law of cosines]\label{thm:law_of_cosines}
        Let $(M, \gm)$ be a Riemannian manifold with bounded sectional curvature $\delta \leq \sec \leq \Delta$. Let $p$ be a point on $M$ and let $x, y \in B_p\pa{R}$ for $R \leq \pi_\Delta$ such that the minimizing length geodesic $\gamma$ that connects them also lies in this ball. Define the angle $\alpha = \angle pxy$. We then have
        \begin{align*}
            d(y, p)^2 &\leq \zeta_{1, \delta}(R)d(x, y)^2 + d(x, p)^2 - 2d(x, y)d(x, p)\cos(\alpha) \\
            d(y, p)^2 &\geq \zeta_{2, \Delta}(R)d(x, y)^2 + d(x, p)^2 - 2d(x, y)d(x, p)\cos(\alpha)
        \end{align*}
        where
        \begin{alignat*}{2}
            \zeta_{1,\kappa}(r)
            &\defi \max\set{1, r\ct_\kappa(r)}
            &= \begin{cases}
                1                                    &\text{if } \kappa \geq 0 \\
                \sqrt{-\kappa}r\coth(\sqrt{-\kappa}r)&\text{if } \kappa < 0
            \end{cases}
            \\
            \zeta_{2,\kappa}(r)
            &\defi \min\set{1, r\ct_\kappa(r)}
            &= \begin{cases}
                \mathrlap1\hphantom{\sqrt{-\kappa}r\coth(\sqrt{-\kappa}r)}  &\text{if } \kappa \leq 0 \\
                \sqrt{\kappa}r\cot(\sqrt{\kappa}r)                          &\text{if } \kappa > 0
            \end{cases}
        \end{alignat*}
        Furthermore, the first bound holds as long as there exists a distance minimizing geodesic contained in $\seg$ that connects $x$ and $y$. These bounds are tight on spaces of constant curvature.
    \end{theorem}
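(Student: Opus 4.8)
The plan is to reduce the statement to a one‑dimensional comparison along the geodesic joining $x$ and $y$, with the squared distance to $p$ as comparison function. Write $r(z) \defi d(z,p)$, put $L \defi d(x,y)$, and let $\deffun{\gamma : [0, L] -> M;}$ be the unit‑speed minimizing geodesic with $\gamma(0) = x$ and $\gamma(L) = y$. Set $\varphi(t) \defi \frac{1}{2}r(\gamma(t))^2$, so $\varphi(0) = \frac{1}{2}d(x,p)^2$ and $\varphi(L) = \frac{1}{2}d(y,p)^2$. Since $\grad r\vert_x$ is the unit vector at $x$ pointing away from $p$, while the initial velocity of the geodesic from $x$ to $p$ is $-\grad r\vert_x$ and forms angle $\alpha$ with $\dgamma(0)$, we get $\varphi'(0) = r(x)\scalar{\grad r\vert_x, \dgamma(0)} = -d(x,p)\cos\alpha$. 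Hence it is enough to prove the two‑sided estimate $\zeta_{2,\Delta}(R) \leq \varphi''(t) \leq \zeta_{1,\delta}(R)$ for $t \in (0, L)$: I would then integrate twice from $0$ with the above values of $\varphi(0)$ and $\varphi'(0)$ and multiply by $2$, which produces exactly the two claimed inequalities (with $L = d(x,y)$).

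Next I would compute $\varphi''$. Because $\gamma$ is a geodesic, $\varphi''(t) = \Hess\pa{\frac{1}{2}r^2}\pa{\dgamma(t), \dgamma(t)}$, exactly as in~\eqref{eq:hess_derivative_along_geodesic}. Using the product rule $\Hess\pa{\frac{1}{2}r^2} = r\,\Hess r + \dif r \tensor \dif r$, splitting $\dgamma = \paral{\dgamma} + \normal{\dgamma}$ with $\paral{\dgamma} = \scalar{\dgamma, \grad r}\grad r$, recalling from the proof of~\Cref{thm:rauch} that $\Hess r$ vanishes in the radial direction, and using $\norm{\normal{\dgamma}}^2 = 1 - \scalar{\dgamma, \grad r}^2$ since $\gamma$ has unit speed, one finds
\[
    \varphi''(t) = r\,\Hess r\pa{\normal{\dgamma},\normal{\dgamma}} + \scalar{\dgamma, \grad r}^2 .
\]
Now~\Cref{thm:rauch}, applied to $w_1 = w_2 = \dgamma(t)$, gives $\ct_\Delta(r)\norm{\normal{\dgamma}}^2 \leq \Hess r\pa{\normal{\dgamma},\normal{\dgamma}} \leq \ct_\delta(r)\norm{\normal{\dgamma}}^2$; multiplying by $r \geq 0$ and adding $s \defi \scalar{\dgamma,\grad r}^2 \in [0,1]$, the value $\varphi''(t)$ is caught between the affine expressions $(1-s)\,r\ct_\kappa(r) + s$ for $\kappa = \Delta$ and $\kappa = \delta$. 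Each of these lies between $1$ and $r\ct_\kappa(r)$, so pointwise
\[
    \min\set{1,\, r\ct_\Delta(r)} \leq \varphi''(t) \leq \max\set{1,\, r\ct_\delta(r)}, \qquad r = r(\gamma(t)) \leq R .
\]

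To make this independent of $t$ I would invoke the elementary monotonicity of $s \mapsto s\,\ct_\kappa(s)$ on $(0, \pi_\kappa)$: it tends to $1$ as $s \to 0$, equals $1$ identically when $\kappa = 0$, is strictly decreasing and $< 1$ when $\kappa > 0$, and strictly increasing and $> 1$ when $\kappa < 0$ — each a one‑line computation on $\sqrt{\kappa}s\cot(\sqrt{\kappa}s)$ and $\sqrt{-\kappa}s\coth(\sqrt{-\kappa}s)$. Combined with $r(\gamma(t)) \leq R$ this gives $\max\set{1, r\ct_\delta(r)} \leq \max\set{1, R\ct_\delta(R)} = \zeta_{1,\delta}(R)$ and $\min\set{1, r\ct_\Delta(r)} \geq \min\set{1, R\ct_\Delta(R)} = \zeta_{2,\Delta}(R)$, the latter being vacuous when $\Delta > 0$ and $R = \pi_\Delta$. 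Thus $\zeta_{2,\Delta}(R) \leq \varphi''(t) \leq \zeta_{1,\delta}(R)$ on $(0, L)$, and the double integration finishes the argument; tightness on constant‑curvature spaces is then witnessed by thin hinges with a vertex approaching the sphere $S_R(p)$, for which the integrated inequality becomes an equality in the limit.

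The one genuine obstacle is the regularity of $r$ along $\gamma$: the displayed formula for $\varphi''$ and the use of~\Cref{thm:rauch} hold only where $r$, hence $\varphi$, is smooth, i.e. where $\gamma(t) \notin \cut(p) \cup \set{p}$, whereas the hypotheses only force $\gamma \subseteq B_p(R) \subseteq B_p(\pi_\Delta) \subseteq M \setminus \conjloc(p)$ (by~\Cref{thm:generalization_cartan_hadamard}), so $\gamma$ may still meet $\cut(p)$. For the upper bound this is harmless: at any $\gamma(t_0)$ one picks a minimizing geodesic $\sigma$ from $p$ to $\gamma(t_0)$ — which has no conjugate points — and the distance to $p$ read off in a normal neighbourhood around $\sigma$ is a smooth function lying above $r$, agreeing with it at $\gamma(t_0)$, whose Hessian still obeys the Rauch upper bound; so $\varphi$ is locally supported from above by $C^2$ functions with second derivative $\leq \zeta_{1,\delta}(R)$, and since $\varphi$ is locally semiconcave along $\gamma$ the double integration still yields $\varphi(L) \leq \varphi(0) + \varphi'(0)L + \frac{1}{2}\zeta_{1,\delta}(R)L^2$. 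Only the absence of conjugate points on $\gamma$ was used, which is also what makes the upper bound valid under the weaker hypothesis of the ``furthermore'' clause. For the lower bound one really needs $\varphi$ to lie above its second‑order Taylor polynomials, which can fail at cut points, so there one uses that $\gamma$ stays inside the normal neighbourhood $\seg$. Finally, points where $\gamma$ meets $p$ are harmless, since $\frac{1}{2}d(\cdot,p)^2$ stays smooth there (it is the squared norm in normal coordinates) with $\varphi'' = 1$, which already satisfies the required bounds.
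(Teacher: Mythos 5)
Your proposal is correct and follows essentially the same route as the paper's proof: both bound $\Hess\pa{\tfrac{1}{2}r^2} = \dif r \tensor \dif r + r\Hess r$ via Rauch's theorem and integrate twice along the geodesic from $x$ to $y$, recovering the $\cos\alpha$ term from $\grad\pa{\tfrac{1}{2}r^2}(x) = -\exp_x^{-1}(p)$. Your additional care --- the monotonicity of $s \mapsto s\ct_\kappa(s)$ needed to replace $r(\gamma(t))$ by $R$, and the support-function argument at cut points for the upper bound (the paper simply assumes $\gamma \subseteq \seg$) --- goes beyond what the paper writes down and is welcome.
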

    \begin{proof}
         Define $f_p = \frac{1}{2}r^2$. We start by relating the Hessian of $f_p$ to the Hessian of $r$ using the chain rule for the differential and the Leibnitz rule for the connection
        \[
            \Hess f_p = \conn\pa{r \dif r} = \dif r \tensor \dif r + r\Hess r.
        \]
        As such, for a radial vector $w$ of norm $1$, since $\Hess r(w,w) = 0$, this equation simplifies to
        \[
            \Hess f_p(w,w) = \dif r(w)^2 = \scalar{\grad r, w}^2 = 1.
        \]
        For normal vectors we may use the bounds on the Hessian of $r$. It is now evident that the functions $\zeta_{1, \delta}$ and $\zeta_{2, \Delta}$ are defined to be upper and lower bound on the Hessian of $f_p$ acting on vectors of norm $1$.

        Let $\deffun{\gamma : [0, t] -> M;}$ be a distance-minimizing geodesic such that $\gamma(0) = x$, $\gamma(t) = y$ and $\gamma(s) \in \seg$ for every $s \in [0, t]$. We will prove the first inequality, as the proof of the second one is analogous. By the bounds above, using~\eqref{eq:hess_derivative_along_geodesic}, we have that
        \[
            \Hess f_p(\dgamma(s), \dgamma(s)) = (f_p \circ \gamma)''(s) \leq \zeta_{1, \delta}(R)
        \]
        for every $s \in [0, t]$, since $f_p$ is differentiable on $\seg$. Integrating this inequality we get
        \[
            \intf[0][t]{(f_p \circ \gamma)''(s)}{s} = (f_p \circ \gamma)'(t) - \scalar{\grad f_p(\gamma(0)), \dgamma(0)} \leq \zeta_{1, \delta}(R)t.
        \]
        Integrating once again we have that
        \[
            (f_p \circ \gamma)(t) - (f_p \circ \gamma)(0) - \scalar{\grad f_p(\gamma(0)), \dgamma(0)}t \leq \zeta_{1, \delta}(R)\frac{t^2}{2}.
        \]
        which, after substituting the values of $f_p$ and $\gamma$, gives
        \[
            d(y, p)^2 \leq \zeta_{1, \delta}(R)d(x,y)^2 + d(x,p)^2 + 2d(x,y)\scalar{\grad f_p(x), \dgamma(0)}.
        \]
        Finally, since $\grad f_p(x) = r \grad r = -\exp_x^{-1}(p)$ we get the desired inequality.
    \end{proof}

    \begin{remark}[Obstructions to a global law of cosines]
        One could ask whether it is possible to extend this local result on $\seg$ to a global result for geodesic triangles whose sides are distance minimizing, as one does in Toponogov's theorem. This is, in general, not possible. Consider the flat torus and a point $p$ in it and an equilateral triangle on a generating circle. The sides of this triangle are distance minimizing of length $x$---one third the circumference of the circle---and the angles of this triangle will be $\alpha = \pi$. Since for the flat torus we have that $\delta = 0$, the first inequality would imply that $3x^2 \leq 2x^2$.
    \end{remark}

    The upper bound for the case $\delta < 0$ has found multiples uses in the optimization literature. Before starting, to be able to put what comes next in context, we recall the definition of Hadamard manifolds.
    \begin{definition}[Hadamard manifold]
        A Riemannian manifold is a \textbf{Hadamard manifold} if it is complete, simply connected, and has non-positive sectional.
    \end{definition}

    These manifolds are called Hadamard manifolds due to the Cartan-Hadamard theorem.
    \begin{theorem}[Cartan-Hadamard]
        A Hadamard manifold is diffeomorphic to $\RR^n$.
    \end{theorem}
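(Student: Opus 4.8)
The plan is to reduce the statement to \Cref{thm:generalization_cartan_hadamard} together with standard covering-space theory. Since $M$ is a Hadamard manifold we have $\sec \leq 0$, so we may take $\Delta = 0$ and then $\pi_\Delta = \pi_0 = \infty$. \Cref{thm:generalization_cartan_hadamard} then tells us that $\pa{\dif \exp_p}_v$ is non-singular for \emph{every} $v \in T_pM$; combined with completeness (so that $\exp_p$ is defined on all of $T_pM$ and, by Hopf--Rinow, is surjective), this says that $\exp_p \colon T_pM \to M$ is a surjective local diffeomorphism.

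Next I would transport the geometry of $M$ back to the tangent space: since $\exp_p$ is a local diffeomorphism, $\hat\gm \defi \exp_p^{\ast}\gm$ is a well-defined Riemannian metric on $T_pM$, and by construction $\exp_p\colon (T_pM, \hat\gm) \to (M, \gm)$ is a local isometry. The key observation is that $(T_pM, \hat\gm)$ is again complete: the rays $t \mapsto tv$ emanating from the origin are carried by $\exp_p$ to geodesics of $M$, hence are geodesics of $\hat\gm$ defined for all $t \in \RR$, so every geodesic of $\hat\gm$ through $0$ is complete and Hopf--Rinow applies.

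Finally I would invoke the standard fact that a local isometry out of a complete Riemannian manifold is automatically a smooth covering map. Since $M$ is simply connected and $T_pM$ is connected, this covering must have a single sheet, i.e. $\exp_p$ is a diffeomorphism; as $T_pM \iso \RR^n$, this proves the theorem. The only non-formal ingredient here is the covering-map statement; if one prefers a self-contained argument one can instead verify the path-lifting property directly, lifting a path in $M$ piecewise along geodesics and using completeness of $\hat\gm$ to guarantee the lifts do not escape to infinity in finite parameter time. I expect this path-lifting/completeness bookkeeping to be the main (though entirely routine) point of the argument.
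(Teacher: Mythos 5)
Your proposal is correct and follows essentially the same route as the paper: apply \Cref{thm:generalization_cartan_hadamard} with $\Delta = 0$ to see that $\exp_p$ is everywhere non-singular, conclude it is a covering map, and use simple connectedness to upgrade this to a diffeomorphism. The only difference is that you carefully justify the covering-map step (pulling back the metric, checking completeness via the radial geodesics, and invoking the fact that a local isometry out of a complete manifold is a covering), whereas the paper compresses this into the phrase \emph{by the inverse function theorem, it is a covering map}---your version supplies exactly the standard argument needed to make that step rigorous.
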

    \begin{proof}
        By~\Cref{thm:generalization_cartan_hadamard}, since $\sec \leq 0$, $\pi_\Delta = \infty$ and the differential exponential map at a point $p \in M$ is always full rank. As such, by the inverse function theorem, it is a covering map, and $\RR^n$ is the universal cover of $M$. Since $M$ is simply connected, $M$ is diffeomorphic to its universal cover.
    \end{proof}

    It is now clear that Hadamard manifolds are a class of particularly well-behaved manifolds, in that the $\cut(p) = \emptyset$ for every $p \in M$, so $\seg = M$ and $\tseg = T_pM$, and all the theorems in~\Cref{sec:first_order_bounds} hold for every $v \in T_pM$.

     If one then considers a Hadamard manifold with sectional curvature bounded below by $\delta < 0$, then one may apply the upper bounds in~\Cref{thm:law_of_cosines} on the whole manifold. This idea was heavily exploited in~\parencite{silvere2013stochastic} to get convergence rates for stochastic gradient descent, and in~\parencite{zhang2016riemannian} to prove convergence both stochastic and non-stochastic setting. In~\parencite{ferreria2019iteration} they used again the lower bounds whenever $\delta < 0$ to prove convergence rates of certain step-size schedulers. In~\parencite{bento2017iteration}, they use the law of cosines on manifolds of either non-negative or non-positive curvature to prove convergence rates for subgradient methods and proximal-point methods. In~\parencite{agarwal2020arc}, the lower bounds are used to prove convergence rates of certain second order method.

    In the previous presentation, we have showed that the Hadamard restriction, that is, being simply connected and with sectional curvature bounded above, is not really necessary. In particular, if we have a positive bound on the curvature, we can still apply these kind of results, at the expense of working on certain neighbourhood of the initial point. The same happens if we remove the condition of the manifold being simply connected.

    \section{Second Order Bounds for the Exponential Map}\label{sec:second_order_bounds}
    In this section, we give bounds on the Hessian of the differential of the exponential map. These kind of bounds were first given in the paper~\parencite{kaul1976schranken}. The proof can just be found in German, so we will give a self-contained presentation. Our proof uses a comparison lemma developed by Kaul, and then streamlines all the other technical tools, considerably simplifying the proof and obtaining tighter explicit bounds.

    \subsection{The differential equation}
    We start by giving a technical remark on the nature of the Hessian of the exponential map. The differential of the exponential for a point $(p,v) \in TM$ is a linear map of the form
    \[
        \deffun{\pa{\dif\exp_p}_v : T_pM -> T_{\exp_p(v)}M;}.
    \]
    For this reason, it can be seen as a section of the bundle $T^\ast_pM \tensor \exp_p^\ast(TM)$ over $T_pM$. We have connections $\connflat$ and $\conn$ on $T^\ast_pM$ and $\exp_p^\ast(TM)$ respectively. Any two connections induce a connection on a tensor product bundle so that the Leibnitz rule holds. In the concrete example of the exponential map, this Leibnitz rule for vector fields $\overline{W}_1, \overline{W}_2$ on $\tseg$, takes the form
    \[
        \conn_{\dif \exp_p(\overline{W}_1)} \pa{\dif \exp_p(\overline{W}_2)}  = \pa{\conn_{\overline{W}_1} \dif \exp_p}(\overline{W}_2) + \dif\exp_p\pa{\connflat_{\overline{W}_1}\overline{W}_2}.
    \]
    Since $\exp_p$ is a diffeomorphism between $\tseg$ and $\seg$, writing $W_i = \dif \exp_p(\overline{W}_i)$ for the pushforward of a vector from $\tseg$ to $\seg$, we see that the Hessian of the exponential map can be put in terms of the Christoffel symbols in normal coordinates as
    \begin{equation}\label{eq:conn_minus_conn_flat}
        \pa{\conn_{\overline{W}_1} \dif \exp_p}(\overline{W}_2) = \conn_{W_1} W_2 - \connflat_{\overline{W}_1}\overline{W}_2 = \Gamma(W_1, W_2) = \Gamma_{i,j}^kW_1^iW_2^j\partial_k.
    \end{equation}
    In other words, the Hessian of the exponential map is exactly the Christoffel symbols of the connection in normal coordinates at $p$ evaluated at the pushforward of the vector along $\exp_p$. In particular, this shows that the Hessian of the exponential map is a symmetric bilinear map of the form
    \[
        \deffun{\pa{\conn\dif\exp_p}_v : T_pM \times T_pM -> T_{\exp_p(v)}M;}.
    \]

    As we did for the first order bounds, our strategy to bound this quantity will be to deduce a differential equation for $\conn \dif \exp_p$ and then bound the norm of the solutions of this equation. In this case, we will have to differentiate a second time through a second variation of a geodesic. Given that this second derivative will not be in the direction of the geodesic, it will not be enough to have vector fields along the geodesic---we will have to have them defined also in the direction in which we want to differentiate them.

    \begin{proposition}\label{prop:diff_equation_second_order}
        Let $(M, \gm)$ be a Riemannian manifold and let $\gamma$ be a geodesic with initial unit vector $v \in T_pM$. For two vectors $w_1, w_2 \in T_pM$ perpendicular to $v$, the vector field along $\gamma$
        \[
            K(t) \defi \pa{\conn \dif \exp_p}_{tv}(tw_1, tw_2)
        \]
        satisfies the following second order inhomogeneous linear differential equation along $\gamma$
        \[
            \ddot{K} + R(K, \dgamma)\dgamma + Y = 0 \mathrlap{\qquad K(0) = 0, \dot{K}(0) = 0}
        \]
        where $Y$ is the vector field along $\gamma$ given by
        \[
            Y \defi
              2 R\pa{J_1, \dgamma}\dot{J}_2
            + 2 R\pa{J_2, \dgamma}\dot{J}_1
            + \pa{\conn_{\dgamma} R}\pa{J_2, \dgamma}J_1
            + \pa{\conn_{J_2} R}\pa{J_1, \dgamma}\dgamma
        \]
        and $J_1, J_2$ are the Jacobi fields along $\gamma$ with initial conditions $J_i(0) = 0$,  $\dot{J}_i(0) = w_i$.
    \end{proposition}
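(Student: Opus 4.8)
The plan is to realize $K$ as a mixed second covariant derivative of a two-parameter variation of $\gamma$ through geodesics, and then to obtain the differential equation by repeatedly commuting covariant derivatives along the variation at the cost of curvature terms.

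First I would set, for $\epsilon > 0$ small,
\[
    \deffun{c : [0,r] \times (-\epsilon,\epsilon)^2 -> M ; (t, s_1, s_2) -> \exp_p\pa{t\pa{v + s_1 w_1 + s_2 w_2}}},
\]
and write $\partial_t, \partial_1, \partial_2$ for the coordinate vector fields on the parameter cube. Since these commute and $\conn$ is torsion free, the mixed covariant derivatives along $c$ are symmetric, $\conn_{\partial_a}\dif c(\partial_b) = \conn_{\partial_b}\dif c(\partial_a)$, and the Leibniz rule for the pullback connection $c^\ast\conn$ holds. Restricting to $s=0$ we have $\dif c(\partial_t)\vert_{s=0} = \dgamma$ and $\dif c(\partial_i)\vert_{s=0} = J_i$, and the geodesic equation $\conn_{\partial_t}\dif c(\partial_t)=0$ holds identically because every slice $c(\cdot,s)$ is a geodesic. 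Writing $c = \exp_p\circ u$ with $u(t,s)\defi t(v+s_1w_1+s_2w_2)$, the field $\dif u(\partial_2) = tw_2$ is independent of $s_1$, so the flat-connection correction in~\eqref{eq:conn_minus_conn_flat} vanishes for this variation and hence
\[
    K(t) = \conn_{\partial_1}\dif c(\partial_2)\vert_{s=0},
\]
where $\conn_{\partial_1}$ denotes $c^\ast\conn$ in the $\partial_1$ direction. The initial conditions follow immediately: $c(0,s)=p$ forces $\dif c(\partial_i)\vert_{t=0}=0$, hence $K(0)=0$, and a short further differentiation gives $\dot{K}(0)=0$ (equivalently, in normal coordinates $K(t) = \Gamma^k_{ij}(\gamma(t))\,t^2w_1^iw_2^j\,\partial_k = O(t^3)$).

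Then I would compute $\ddot{K} = \conn_{\partial_t}\conn_{\partial_t}\conn_{\partial_1}\dif c(\partial_2)$ by moving each outer $\conn_{\partial_t}$ inward past $\conn_{\partial_1}$ via $\conn_{\partial_t}\conn_{\partial_1}X = \conn_{\partial_1}\conn_{\partial_t}X + R\pa{\dif c(\partial_t),\dif c(\partial_1)}X$ (no bracket term, since $[\partial_t,\partial_1]=0$), repeatedly using $\conn_{\partial_t}\dif c(\partial_a) = \conn_{\partial_a}\dif c(\partial_t)$ together with $\conn_{\partial_t}\dif c(\partial_t)=0$, and applying the Leibniz rule $\conn_W\pa{R(X,Y)Z} = \pa{\conn_WR}(X,Y)Z + R(\conn_WX,Y)Z + R(X,\conn_WY)Z + R(X,Y)\conn_WZ$ whenever a derivative lands on a curvature term. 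Evaluating the result at $s=0$ expresses $\ddot{K}$ as $-R(K,\dgamma)\dgamma$ plus an explicit finite sum of terms of the two shapes $R(\cdot,\cdot)\cdot$, with entries among $\dgamma, J_1, J_2, \dot{J}_1, \dot{J}_2$, and $\pa{\conn_{\cdot}R}(\cdot,\cdot)\cdot$; what remains is to verify that this sum equals $-Y$. This last step is the main obstacle---not conceptually deep, but error-prone: the $R$-only terms must be collapsed to $-2R(J_1,\dgamma)\dot{J}_2 - 2R(J_2,\dgamma)\dot{J}_1$ using skew-symmetry $R(X,Y)=-R(Y,X)$ and the first Bianchi identity, while the remaining derivative-of-curvature terms must be collapsed to $\pa{\conn_\dgamma R}(J_2,\dgamma)J_1 + \pa{\conn_{J_2}R}(J_1,\dgamma)\dgamma$ by one application of the second Bianchi identity (cyclic in $J_1,\dgamma,J_2$) followed by the first Bianchi identity applied to the tensor $\conn_\dgamma R$, which inherits the algebraic symmetries of $R$. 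Keeping every term in a fixed slot convention and applying the two Bianchi identities mechanically is the way to avoid dropping a sign.
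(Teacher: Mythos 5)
Your proposal is correct and is essentially the paper's proof: the same two-parameter geodesic variation, the same identification of $K$ as a mixed second covariant derivative of the variation (with the $\connflat$-correction vanishing), the same derivation of the initial conditions, and the same commute-covariant-derivatives-at-the-cost-of-curvature bookkeeping; the only difference is one of ordering, since the paper applies $\conn_{J_2}$ to the Jacobi equation satisfied by the family $\tilde{J}_1(t,s)$ and commutes outward, whereas you compute $\ddot{K}$ and push the two $\conn_{\partial_t}$'s inward until the Jacobi equation appears. A small consequence worth noting: the paper's ordering produces the terms $\pa{\conn_{\dgamma} R}\pa{J_2,\dgamma}J_1$ and $\pa{\conn_{J_2}R}\pa{J_1,\dgamma}\dgamma$ directly and needs only skew-symmetry and the first Bianchi identity, while your ordering yields $\pa{\conn_{J_1}R}\pa{J_2,\dgamma}\dgamma$ and $\pa{\conn_{\dgamma}R}\pa{\dgamma,J_1}J_2$ and genuinely requires the second Bianchi identity followed by the first Bianchi identity applied to $\conn_{\dgamma}R$, exactly as you anticipate --- and that final verification does go through.
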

    \begin{proof}
        Define the geodesic variation
        \[
            \deffun{c : [0, r] \times \pa{-\epsilon, \epsilon} \times \pa{-\epsilon, \epsilon} -> M;
                    (t,s_1, s_2) -> \exp_p\pa{t\pa{v + s_1w_1 + s_2w_2}}}
        \]
        Consider the first variation along $\gamma(t) = \exp_p(tv)$ in the direction of $s_1$ given by the family of Jacobi fields
        \[
            \tilde{J}_1(t, s)= \left.\frac{\partial c}{\partial s_1}\right\vert_{s_1=0}(t, s) = \pa{\dif \exp_p}_{t(v + sw_2)}(tw_1).
        \]
        We denote by $J_i(t)$ the Jacobi field along $\gamma$ with initial conditions $(0, w_i)$. In particular, we have that $J_1(t) = \tilde{J}_1(t,0)$, so $\tilde{J}_1$ is an extension of $J_1$ in the direction of $J_2$ so that $\conn_{J_2}\tilde{J}_1$ is well defined. Moreover, we have that $\connflat_{J_2}\tilde{J}_1 = 0$ as $\tilde{J}_1$ is constant in the direction of $J_2$, that is, in the direction of $stw_2$ for $s \in (-\epsilon, \epsilon)$ and a fixed $t$.
        For this reason, the vector field $K$ is the second variation along $\gamma$ in the directions $w_1, w_2$
        \[
            K(t) =
                \pa{\conn \dif \exp_p}_{tv}\pa{tw_1, tw_2} =
                \conn_{J_2} \tilde{J}_1 =
                \left.\frac{\partial \tilde{J}_1}{\partial s}\right\vert_{s=0}(t).
        \]

        Furthermore, $\tilde{J}_1$ satisfies the Jacobi equation in all its domain with initial conditions $(0, \frac{1}{\norm{v+sw_1}})$ so that
        \[
            \conn_{\grad r}\conn_{\grad r}\tilde{J}_1 + R(\tilde{J}_1, \grad r) \grad r = 0.
        \]
        We may then derive a differential equation for $K$ along $\gamma$ by differentiating this equation in the direction of $J_2$
        \begin{equation}\label{eq:first_deriv}
            \conn_{J_2} \conn_{\grad r} \conn_{\grad r} \tilde{J}_1 + \conn_{J_2}\pa{R(\tilde{J}_1, \grad r)\grad r} = 0.
        \end{equation}
        Note that, as we just are interested in deriving a differential equation for $K$ along $\gamma$, we will use that $\grad r \vert_\gamma = \dgamma$ and $\tilde{J}_1 = J_1$ whenever we have that a quantity just depends on the values of the vector fields involved along $\gamma$, rather than in a neighborhood  around $\gamma$ in the direction of $J_2$.

        We may put the first term of~\eqref{eq:first_deriv} in terms of $K$ by repeatedly using the definition of the curvature tensor and the fact that since $[J_2, \grad r]\vert_\gamma = 0$, we have that $\conn_{J_2}\grad r = \conn_{\dgamma}J_2 = \dot{J}_2$
        \begin{align*}
            \conn_{J_2} \conn_{\grad r} \conn_{\grad r} \tilde{J}_1
            &= \conn_{\dgamma} \conn_{J_2} \conn_{\grad r} \tilde{J}_1 + R(J_2, \dgamma)\conn_{\dgamma} J_1 \\
            &= \conn_{\dgamma} \conn_{\dgamma} K + \conn_{\dgamma}\pa{R\pa{J_2, \dgamma}J_1} + R(J_2, \dgamma)\dot{J}_1 \\
            &= \conn_{\dgamma} \conn_{\dgamma} K + \pa{\conn_{\dgamma} R}\pa{J_2, \dgamma}J_1
                + R\pa{\dot{J}_2, \dgamma}J_1
                + 2R\pa{J_2, \dgamma}\dot{J}_1.
        \end{align*}
        We can expand the second term as
        \[
            \conn_{J_2}\pa{R(\tilde{J}_1, \grad r)\grad r}
            = \pa{\conn_{J_2} R}\pa{J_1, \dgamma}\dgamma
            + R\pa{K, \dgamma}\dgamma
            + R\pa{J_1, \dot{J}_2}\dgamma
            + R\pa{J_1, \dgamma} \dot{J}_2.
        \]
        Putting everything together and using the symmetries of the curvature tensor and the first Bianchi identity we get the differential equation.

        For the initial conditions, we have that
        \[
            K(0)=\pa{\conn \dif \exp_p}_0(0,0) = 0.
        \]
        Differentiating the Hessian in the direction of $\dgamma = \frac{\dif}{\dif t}$ we have that
        \[
        \dot{K}(0) = \pa{\conn\conn \dif \exp_p}_0(v, 0, 0) + \pa{\conn \dif\exp_p}_0(w_1, 0) + \pa{\conn \dif\exp_p}_0(0, w_2) = 0.\qedhere
        \]
    \end{proof}

    \subsection{Manifolds of bounded geometry}
    By~\Cref{prop:diff_equation_second_order}, we have that the Hessian of the exponential map solves a Jacobi-like differential equation with two main differences: It is inhomogeneous and depends on the covariant derivative of the curvature tensor.

    The inhomogeneity of the differential equation will stop us from simplifying the differential equation into two first order differential equations, as we did for the Jacobi equation. In contrast, this time we will have to deal directly with the second order equation. Luckily, we have already developed most of the tools to do so.

    The fact that involves the covariant derivative of the curvature tensor is a more intrinsic difference. Since the sectional curvature completely defines the curvature tensor, bounds on the sectional curvature can be translated into bounds on the norm of the curvature tensor. One question that naturally arises is whether these $0$-th order bounds also give first order bounds. As one may expect, this is not the case.

    \begin{example}[Manifold of bounded $0$-th order and unbounded $1$-st order]
        Consider a rotationally symmetric surface of the form $\dif r^2 + \rho(r)^2\dif \theta^2$ on the cylinder $(0,1) \times \SS^1$ for a function $\rho > 0$. These metrics have a particularly simple formula for the sectional curvature
        \[
            \sec(\partial_r, \partial_\theta) = \frac{R(\partial_r, \partial_\theta, \partial_\theta, \partial_r)}{\rho^2}= -\frac{\ddot{\rho}}{\rho}.
        \]
        All we have to do now is to choose a positive function $\rho$ with bounded second derivative and arbitrarily large third derivative, for example $\rho(r) = 2 + r^5 \sin(1/r)$. This metric has $\abs{\sec} < 12$, but
        \[
        \pa{\conn_{\partial_r}R}\pa{\partial_r, \partial_\theta, \partial_\theta, \partial_r}
        = D_{\partial_r}\pa{R\pa{\partial_r, \partial_\theta, \partial_\theta, \partial_r}}
        -2R\pa{\partial_r, \conn_{\partial_r}\partial_\theta, \partial_\theta, \partial_r}
        = \dot{\rho}\ddot{\rho} - \rho \dddot{\rho}
        \]
        which is unbounded as $r \to 0$.
    \end{example}

    This example motivates the following definition.
    \begin{definition}[Manifold of bounded geometry]\label{def:bounded_geometry}
        Let $(M, \gm)$ be a Riemannian manifold. We say that a manifold has $k$-bounded geometry if for every $i = 0, \dots, k$ there exist constants $C_i \geq 0$ such that
        \[
            \norm{\conn^i R} \leq C_i
        \]
        and the injectivity radius is uniformly bounded below by a positive constant
        \[
            \inj \defi \inf_{p \in M}\inj(p) > 0.
        \]
    \end{definition}

    \begin{remark}
        A manifold has $0$-bounded geometry if it has bounded sectional curvature above and below and positive injectivity radius. As one may expect, given that the $1$-bounded geometry is given by the Christoffel symbols, which are given by directional derivatives of the metric $\gm$ in normal coordinates, the condition on the boundedness of the derivatives of the curvature tensor is equivalent to asking for the entries of the metric tensor to be bounded in normal coordinates. This was first proved in~\parencite{eichhorn1991boundedness}.
    \end{remark}

    It is clear that any compact manifold with any given metric will be of $k$-bounded geometry for every $k \in \mathbb{N}$. A natural question would be whether this condition puts any constraint in the topology of the manifold. This was answered in the negative in~\parencite{greene1978complete}.
    \begin{theorem}[Greene]
        Every differentiable manifold admits a complete metric of bounded geometry.
    \end{theorem}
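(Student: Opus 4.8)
\emph{Proof idea.} This is Greene's theorem; here is the strategy I would follow. Write $m = \dim M$ and assume, as we may, that $M$ is smooth, Hausdorff and second countable. The compact case is immediate: every smooth metric on a compact manifold is complete, each covariant derivative $\conn^i R$ is then a continuous tensor on a compact space and hence bounded, and $p \mapsto \inj(p)$ is positive and continuous, so $\inf_p \inj(p) > 0$. So I would assume $M$ noncompact.

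The plan is to realise the metric as the one \emph{induced by a carefully chosen proper embedding} $\iota \colon M \hookrightarrow \RR^N$. Three facts make this work. First, if $\iota$ is proper then $\iota(M)$ is a closed submanifold of $\RR^N$, so the induced metric $g \defi \iota^{\ast} g_{\mathrm{eucl}}$ is complete. Second, by the Gauss equation the intrinsic curvature of a Euclidean submanifold is controlled by its second fundamental form $\mathrm{II}$, namely $\abs{\sec} \leq 2\norm{\mathrm{II}}^2$, and more generally each $\conn^i R$ is a universal polynomial in $\mathrm{II}, \conn\mathrm{II}, \dots, \conn^i\mathrm{II}$; hence a proper embedding with $\norm{\conn^i\mathrm{II}}$ uniformly bounded for every $i$ automatically gives a complete metric with every $\norm{\conn^i R}$ bounded. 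Third, such bounds also force $\inj > 0$: from $\abs{\sec} \leq 2\norm{\mathrm{II}}_\infty^2$ and \Cref{thm:generalization_cartan_hadamard} the distance along any geodesic to the conjugate locus is bounded below by a constant $c_1 > 0$ depending only on $\norm{\mathrm{II}}_\infty$; and if $\gamma$ is a geodesic loop of length $L$, then $\iota \circ \gamma$ is a unit-speed curve in $\RR^N$ whose acceleration is normal to $M$ of norm at most $\norm{\mathrm{II}}_\infty$, so a two-term Taylor estimate gives $0 = \abs{\iota(\gamma(L)) - \iota(\gamma(0))} \geq L\bigl(1 - \tfrac{1}{2}\norm{\mathrm{II}}_\infty L\bigr)$ and hence $L \geq 2/\norm{\mathrm{II}}_\infty$; Klingenberg's lemma then yields $\inj(q) \geq \min\{c_1, 1/\norm{\mathrm{II}}_\infty\} > 0$ for all $q$.

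So everything reduces to constructing a \emph{proper embedding whose second fundamental form and all of its covariant derivatives are uniformly bounded}. I would begin from a proper embedding $\iota_0$ supplied by Whitney's theorem and then correct it, because properness by itself does not bound $\mathrm{II}$: for instance, adding to the flat plane in $\RR^3$ a sequence of dimples of width $1/n$ marching off to infinity produces a proper embedding of $\RR^2$ whose induced metric stays uniformly equivalent to the Euclidean one but whose Gauss curvature grows like $n^2$. To rule this out I would work along a compact exhaustion $K_1 \subset K_2 \subset \cdots$ of $M$, use a subordinate partition of unity, and, allowing extra ambient dimensions, inductively deform $\iota_0$ shell by shell so that over each shell the embedding varies slowly relative to the induced distance, maintaining at each stage that $\iota$ is a proper injective immersion.

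The step I expect to be the genuine obstacle --- and which is precisely the technical core of Greene's argument --- is the bookkeeping in this inductive deformation: one must arrange that the bounds on $\norm{\conn^i\mathrm{II}}$ extracted over successive shells do not deteriorate, so that a single family of constants works uniformly over all of $M$, while keeping the glued map a proper embedding. Granting this, completeness, the curvature bounds through the Gauss equation, and the lower bound on $\inj$ follow as above, producing the desired complete metric of bounded geometry.
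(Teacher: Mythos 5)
The paper offers no proof of this theorem: it is stated as a quotation of Greene's result with a citation, so there is no internal argument to compare yours against; I can only judge the proposal on its own terms. Your reductions are sound and well organized. Properness of the embedding does give completeness of the induced metric; the Gauss and Codazzi equations in a flat ambient space do express each $\conn^i R$ polynomially in $\mathrm{II}, \conn\mathrm{II}, \dots, \conn^i\mathrm{II}$, so uniform bounds on the latter bound the former; and the pair of estimates you give --- the conjugate-radius bound coming from $\abs{\sec} \leq 2\norm{\mathrm{II}}_\infty^2$ together with the Taylor-expansion lower bound $L \geq 2/\norm{\mathrm{II}}_\infty$ on the length of any geodesic loop, combined through Klingenberg's lemma --- do yield $\inj > 0$. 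The dimple example correctly identifies why properness alone is insufficient, and the disposal of the compact case is fine.

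However, the statement to which you reduce everything --- that every noncompact manifold admits a proper embedding into some $\RR^N$ whose second fundamental form and all of its covariant derivatives are uniformly bounded --- is essentially equivalent in difficulty to the theorem being proved, and you explicitly leave its proof as ``bookkeeping'' in an inductive deformation that you do not perform. Nothing in the proposal explains how to deform $\iota_0$ over a shell $K_{n+1} \setminus K_n$ without disturbing the bounds already secured over $K_n$, how to keep the glued map injective and proper globally (self-intersections between distant shells become a genuine danger once the pieces are moved independently, even with extra ambient dimensions), or why the constants controlling $\norm{\conn^i \mathrm{II}}$ can be chosen independently of the shell index. That construction is the entire content of Greene's theorem, so what you have is a correct framing of the problem with its central step missing, not a proof.
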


    Given that we usually have access to upper and lower bounds on the sectional curvature, and given the form that takes the covariant derivatives in~\Cref{prop:diff_equation_second_order}, we give the following definition, which is just a generalisation of that given in the introduction.
    \begin{definition}\label{def:weird_bounded_geometry}
        We say that a Riemannian manifold $(M, \gm)$ has $(\delta, \Delta, \Lambda)$-bounded geometry if $\inj > 0$ and
        \begin{gather*}
            \delta \leq \sec \leq \Delta\\
            \norm{\pa{\conn_u R}(w, u)w + \pa{\conn_w R}\pa{w, u}u} \leq 2\Lambda\norm{u}^2\norm{w}^2 \mathrlap{\qquad \forall p\in M\quad u,w \in T_pM.}
        \end{gather*}
    \end{definition}

    \begin{remark}
        If a manifold is of $(\delta, \Delta, \Lambda)$-bounded geometry according to the definition in the introduction, it is of $(\delta, \Delta, \Lambda)$-bounded geometry according to this definition, but not necessarily the other way around. In the proofs, we will just use this definition, as it is all we need, and it also heavily simplifies the computations when one wants to estimate the actual constant $\Lambda$ for a given manifold.
    \end{remark}

    \begin{remark}
        We will not use the fact that these manifolds have uniformly bounded injectivity radius in this paper, but this fact turns out to be crucial when proving other flavour of theorems in optimization of manifolds that involve $\exp^{-1}_p$ or parallel transport along geodesics or a retraction.
    \end{remark}

    \subsection{Curvature bounds}
    We start by recalling the following lemma that gives a closed formula for the curvature tensor of manifolds of constant sectional curvature. In its more general form, it says that the Riemannian curvature tensor is completely specified by the sectional curvature.

    \begin{lemma}\iftoggle{arxiv}{}{[Riemann, 1854]}\label{lemma:constant_curvature}
        Let $(M, \gm)$ be a Riemannian manifold of constant sectional curvature $\kappa \in \RR^n$. The curvature tensor takes the form
        \[
            R(x,y)z \defi R_\kappa(x, y)z = \kappa\pa{\scalar{z, y}x - \scalar{z, x}y} \mathrlap{\qquad \forall x,y,z \in T_pM.}
        \]
    \end{lemma}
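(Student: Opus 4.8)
The plan is to prove the identity pointwise: fix $p \in M$ and work entirely inside $\pa{T_pM, \scalar{\cdot,\cdot}}$, where the statement becomes a purely algebraic fact about tensors with the symmetries of a curvature tensor. Introduce the model $(1,3)$-tensor
\[
    T(x,y)z \defi \scalar{z, y}x - \scalar{z, x}y,
\]
so that the claim is $R = \kappa T$ at $p$. First I would check by direct expansion that $T$ (hence $\kappa T$) is an \emph{algebraic curvature tensor}: it is antisymmetric in $x,y$; its fully covariant form $T(x,y,z,w) \defi \scalar{T(x,y)z, w}$ is antisymmetric in $z,w$ and invariant under the pair swap $(x,y)\leftrightarrow(z,w)$; and it obeys the first Bianchi identity $T(x,y)z + T(y,z)x + T(z,x)y = 0$, which collapses to $0$ termwise. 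Since $R$ enjoys all of these symmetries, the difference $S \defi R - \kappa T$ is again an algebraic curvature tensor at $p$.

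Next I would compute the sectional curvature of the model tensor: for $x, y$ spanning a plane,
\[
    \frac{\scalar{(\kappa T)(x,y)y, x}}{\norm{x}^2\norm{y}^2 - \scalar{x,y}^2}
    = \kappa\,\frac{\scalar{y,y}\scalar{x,x} - \scalar{y,x}^2}{\norm{x}^2\norm{y}^2 - \scalar{x,y}^2} = \kappa,
\]
which matches the sectional curvature of $R$ by hypothesis. Hence $S$ has vanishing sectional curvature: $S(x,y,y,x) = 0$ for all $x, y \in T_pM$.

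It then remains to prove the algebraic lemma that an algebraic curvature tensor $S$ with $S(x,y,y,x) = 0$ for every $x,y$ must vanish. This is the only step needing a short computation, and it is the one I expect to be the main (though routine) obstacle. Polarizing $S(x,y,y,x)=0$ in the second slot gives $S(x,y,z,x) + S(x,z,y,x) = 0$; on the other hand the pair-swap symmetry together with the two antisymmetries yields $S(x,y,z,x) = S(z,x,x,y) = -S(x,z,x,y) = S(x,z,y,x)$, so $2S(x,y,z,x) = 0$ and thus $S(x,y,z,x) = 0$ for all $x,y,z$. Polarizing this in the first slot gives $S(x,y,z,w) + S(w,y,z,x) = 0$, i.e. $S$ is also antisymmetric in its first and last arguments. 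The transpositions $(1\,2)$, $(3\,4)$ and $(1\,4)$ generate the full symmetric group $S_4$, so $S$ changes sign under every permutation of its four arguments; feeding this total antisymmetry into the first Bianchi identity, under which a cyclic permutation of the first three arguments is even, produces $3S = 0$, hence $S = 0$.

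Combining the steps, $R = \kappa T = R_\kappa$ at every $p \in M$, which is the assertion. Note that only the pointwise symmetries of $R$ and the constant-curvature hypothesis are used; no completeness or other global input enters.
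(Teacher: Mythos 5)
Your proof is correct, and it follows the same overall strategy as the paper: build the model tensor, observe that the difference $S = R - \kappa T$ is again a curvature-like (algebraic curvature) tensor with vanishing sectional curvature, and then invoke the algebraic fact that such a tensor must be zero. The only place where you genuinely diverge is in how that last algebraic fact is established. The paper does it in one stroke via the polarization identity
\[
    \left.\frac{\partial^2}{\partial s \partial t}\right\vert_{(0,0)}
    \Bigl[
    S(x+tw,\, y+sz,\, y+sz,\, x+tw) -
    S(x+tz,\, y+sw,\, y+sw,\, x+tz)
    \Bigr] = 6S(x,y,z,w),
\]
which immediately exhibits $S(x,y,z,w)$ as a combination of diagonal values $S(u,v,v,u)$, all of which vanish. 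You instead run the classical chain: polarize twice to get antisymmetry in the first and fourth slots, observe that the transpositions $(1\,2)$, $(3\,4)$, $(1\,4)$ generate $S_4$ so that $S$ is totally antisymmetric, and then feed this into the first Bianchi identity to conclude $3S=0$. Both are standard and correct; the paper's identity is more compact but must be verified by expansion, whereas your route is longer but each step is transparent and uses only the named symmetries. Your verifications of the symmetries of $T$, of the Bianchi identity, and of the sectional curvature of $\kappa T$ are all accurate, and you correctly note that the statement is purely pointwise and algebraic.
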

    \iftoggle{arxiv}{}{%
    \begin{proof}
        Define
        \begin{align*}
            R_\kappa(x, y)z &= \kappa\pa{\scalar{z, y}x - \scalar{z, x}y} \\
            R_\kappa(x, y, z, w) &= \scalar{R_\kappa\pa{x,y}z, w}
        \end{align*}
        These tensors are just the $(1,3)$ and $(0,4)$ forms of the same tensor. It is direct to see that this tensor have the same symmetries as a curvature tensor, namely, it is skew symmetric in the first two components and in the last two components, satisfies the Bianchi identity, and it is symmetric between the first two and last two components. A tensor that has these symmetries is called a \emph{curvature-like tensor}. The tensor $D = R - R_\kappa$ is another curvature-like tensor, and we find that, since $(M, \gm)$ has constant sectional curvature $\kappa$, $D(v, w, w, v) = 0$.

        We now just have to prove that for a curvature-like tensor, the sectional curvature completely determines the full tensor, but this is direct applying polarization as
        \[
            \left.\frac{\partial^2}{\partial s \partial t}\right\vert_{(0,0)}
            \cor[\Big]{
            D(x+tw, y+sz, y+sz, x+tw) -
            D(x+tz, y+sw, y+sw, x+tz)
            } = 6D(x,y,z,w).\qedhere
        \]
    \end{proof}
    }

    To be able to bound the terms in the differential equation, we will need some estimates on the norm of the curvature tensor in terms of the sectional curvature. These are a generalization of Berger's lemma~\parencite{berger1960sur}. The first three inequalities are announced---the last one with a worse constant---in~\parencite{kaul1976schranken} citing for their proof a monograph by Karcher that was never published. We provide original proofs for these inequalities. The third inequality appears in~\parencite{karcher1970short}, but with a different proof. The last inequality is entirely from~\parencite{karcher1970short}.

    \begin{proposition}\label{prop:curvature_bounds}
        Let $(M, \gm)$ be a Riemannian manifold with bounded sectional curvature $\delta \leq \sec \leq \Delta$ and define
        \[
            \epsilon = \frac{\Delta - \delta}{2} \qquad \mu = \frac{\Delta + \delta}{2} \qquad K = \max\set{\abs{\Delta}, \abs{\delta}}.
        \]
        Given a point $p \in M$, we have the following inequalities
        \begin{align}
            \abs{R(x, y, y, w) - R_\mu(x, y, y, w)} &\leq \epsilon\norm{x}\norm{y}^2\norm{w}\mathrlap{\qquad \forall x, y, w \in T_pM} \label{eq:inequality_R_3}\\
            \norm{R(x, y, z, w) - R_\mu(x, y, z, w)} &\leq \frac{4}{3}\epsilon\norm{x}\norm{y}\norm{z}\norm{w}\mathrlap{\quad \forall x, y, z, w \in T_pM}\label{eq:inequality_R_4}
        \end{align}
        The constants $1$ and $\frac{4}{3}$ are tight.

        Furthermore, if $e \in T_pM$ is a unitary vector and we define $\normal{R}(x,y)z$ as the component of the curvature tensor perpendicular to $e$, we have that for every $y,z$ perpendicular to $e$
        \begin{align}
            \norm{\normal{R}(e,y)z} &\leq \frac{4}{3}\epsilon\norm{y}\norm{z} \label{eq:bound_R_normal}\\
            \norm{R(e,y)z} &\leq 2K.\label{eq:bound_R_all}
        \end{align}
    \end{proposition}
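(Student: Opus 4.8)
The plan is to reduce all four statements to the curvature‑like tensor $D \defi R - R_\mu$, which is again curvature‑like (a difference of such), and for which \Cref{lemma:constant_curvature} gives $R_\mu(v,w,w,v) = \mu\pa{\norm{v}^2\norm{w}^2 - \scalar{v,w}^2}$, so that
\[
    D(v,w,w,v) = \pa{\sec(v,w) - \mu}\pa{\norm{v}^2\norm{w}^2 - \scalar{v,w}^2}
    \mathrlap{\qquad \forall v, w \in T_pM.}
\]
Since $\abs{\sec(v,w) - \mu} \leq \epsilon$, this yields the \emph{diagonal} bound $\abs{D(v,w,w,v)} \leq \epsilon\norm{v}^2\norm{w}^2$. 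Now fix $y$: the map $v \mapsto D(v,y,y,v)$ is a quadratic form on $T_pM$ whose associated symmetric bilinear form is precisely $(u,v)\mapsto D(u,y,y,v)$ — one checks $D(u,y,y,v) = D(v,y,y,u)$ directly from the pair‑symmetry and skew‑symmetries of $D$. As the operator norm of a symmetric bilinear form is attained on the diagonal, the diagonal bound upgrades to $\abs{D(u,y,y,v)} \leq \epsilon\norm{u}\norm{v}\norm{y}^2$, which is exactly \eqref{eq:inequality_R_3}.

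For \eqref{eq:inequality_R_4} I would polarize \eqref{eq:inequality_R_3} in the two middle slots: evaluating on $y\pm z$ gives $\abs{D(x,y,z,w) + D(x,z,y,w)} \leq \epsilon\norm{x}\norm{w}\pa{\norm{y}^2 + \norm{z}^2}$, hence $\abs{D(x,y,z,w) + D(x,z,y,w)} \leq 2\epsilon$ for unit vectors. Writing this for the three cyclic permutations of $(x,y,z)$ in the first three slots and using $D(a,b,c,d) = -D(b,a,c,d)$, one gets $\abs{A - C}, \abs{B - A}, \abs{C - B} \leq 2\epsilon$, where $A = D(x,y,z,w)$, $B = D(y,z,x,w)$, $C = D(z,x,y,w)$. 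The first Bianchi identity gives $A + B + C = 0$, so $A = \tfrac13\pa{(A-B) - (C-A)}$ and $\abs{A} \leq \tfrac43\epsilon$; homogenizing recovers \eqref{eq:inequality_R_4}. For the sharpness of $1$ and $\tfrac43$ I would exhibit $\CP{2}$ with the Fubini--Study metric normalized to $1 \leq \sec \leq 4$ (so $\epsilon = \tfrac32$), where suitable unit vectors — chosen adapted to the complex structure — saturate each inequality.

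Inequality \eqref{eq:bound_R_normal} then follows from \eqref{eq:inequality_R_4} with essentially no extra work: since $z \perp e$, \Cref{lemma:constant_curvature} gives $R_\mu(e,y)z = \mu\scalar{z,y}e$, which is radial, so $\normal{R}(e,y)z = \normal{D}(e,y)z$ and
\[
    \norm{\normal{R}(e,y)z} = \sup_{\norm{u}=1,\, u \perp e} D(e,y,z,u) \leq \tfrac43\epsilon\norm{y}\norm{z}.
\]
For \eqref{eq:bound_R_all} I would run the same polarization‑plus‑Bianchi argument on $R$ itself, a genuine curvature‑like tensor with $\abs{\sec} \leq K$, hence $\abs{R(v,w,w,v)} \leq K\norm{v}^2\norm{w}^2$; this already gives $\norm{R(e,y)z} \leq \tfrac43 K\norm{y}\norm{z} \leq 2K\norm{y}\norm{z}$. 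A marginally more elementary polarization yields the stated constant $2K$ directly, without invoking Bianchi, as in \parencite{karcher1970short}.

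The main obstacle is getting the \emph{sharp} constants rather than merely some constants: a crude polarization of the diagonal bound loses a factor (one naturally arrives at bounds like $2\epsilon$ in place of $\epsilon$), and the two devices that avoid this are (i) recognizing $D(\cdot,y,y,\cdot)$ as a \emph{symmetric} bilinear form, so that its diagonal values already control its full operator norm with no slack, and (ii) using the first Bianchi identity to turn the polarized estimate — which only bounds the symmetric combination $D(x,y,z,w) + D(x,z,y,w)$ — into a bound on the single component $D(x,y,z,w)$ with the optimal factor $\tfrac43$.
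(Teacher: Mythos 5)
Your proposal is correct, and for \eqref{eq:inequality_R_3} and \eqref{eq:bound_R_normal} it is essentially the paper's own argument: the symmetric bilinear form $T_y(x,w)=D(x,y,y,w)$ whose operator norm is attained on the diagonal, and the observation that $R_\mu(e,y)z$ is purely radial when $y,z\perp e$. For \eqref{eq:inequality_R_4} your route is algebraically equivalent but packaged differently: the paper uses a single six-term polarization identity $6R(x,y,z,w)=R(x,y+z,y+z,w)-\cdots$, which has the first Bianchi identity already baked in, whereas you polarize only in the two middle slots and then invoke Bianchi explicitly to pass from the bounds $\abs{A-B},\abs{B-C},\abs{C-A}\le 2\epsilon$ on your cyclic values to $\abs{A}\le\tfrac{4}{3}\epsilon$; both give the same constant, and your version makes the origin of the factor $\tfrac{4}{3}$ more transparent. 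The one place you genuinely go beyond the paper is \eqref{eq:bound_R_all}: the paper does not prove it and simply cites \parencite{karcher1970short}, while you note that $R$ itself is a curvature-like tensor with diagonal bounded by $K$ on orthonormal pairs, so the same two-step polarization yields $\abs{R(x,y,z,w)}\le\tfrac{4}{3}K\norm{x}\norm{y}\norm{z}\norm{w}$, which implies (and in fact improves) the stated $2K$. Your tightness claim for the constants $1$ and $\tfrac{4}{3}$ on $\CP{2}$ is left at the same level of detail as the paper, which also defers the verification to Karcher, so I would not count that as a gap.
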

    \begin{proof}
        We may assume that the vectors $x,y,z,w$ are of norm $1$ by linearity. For~\eqref{eq:inequality_R_3}, since the curvature tensor is skew-symmetric in the first two components and the last two components, we may assume that $x,w$ are perpendicular to $y$. Fix a vector $y$ and consider the bilinear form
        \[
            T_y(x,w) = R(x,y,y,w) - R_\mu(x,y,y,w).
        \]
        This application is symmetric, and as such, it attains its maximum at an eigenvector $x_1$ of norm one orthogonal to $y$ so that for every $x$
        \[
            \abs{T_y(x,w)} \leq \abs{R(x_1,y,y,x_1) - R_\mu(x_1,y,y,x_1)} = \abs{\sec(x_1, y) - \mu} \leq \epsilon.
        \]

        For the second inequality, by linearity we can assume that all the vectors have the same norm. By polarization we have that
        \begin{align*}
            6R(x,y,z,w) &= R(x, y+z, y+z, w) - R(x, y-z, y-z, w) \\
                        &-R(y, x+z, x+z, w) + R(y, x-z, x-z, w).
        \end{align*}
    We then apply the triangle inequality to the expression for $(R-R_\mu)(x,y,z,w)$ together with~\eqref{eq:inequality_R_3} to get
        \[
            6\abs{(R-R_\mu)(x,y,z,w)} \leq \epsilon\pa[\Big]{\norm{x}\norm{w}\pa{\norm{y+z}^2 + \norm{y-z}^2} + \norm{y}\norm{w}\pa{\norm{x+z}^2+\norm{x-z}^2}}
        \]
        and using the parallelogram law together with the fact that all the vectors have the same norm, we get that
        \[
            \norm{x+y}^2 + \norm{x-y}^2 = 2(\norm{x}^2 + \norm{y}^2) = 4\norm{x}\norm{y}
        \]
        and the second inequality follows.

        These two inequalities are tight on $\CC P^n$ seen as a real manifold~\parencite{karcher1970short}.

        The bound on the normal part of the curvature tensor follows directly from~\eqref{eq:inequality_R_4}, since $R_\mu(e,y,z,w) = 0$ whenever $y,z,w$ are perpendicular to $e$, so choosing $w$ as the unitary vector in the direction of $\normal{R}(e,y)z$ we get that
        \[
            \norm{\normal{R}(e,y)z} = \abs{R(e,y,z,w)} \leq \frac{4}{3}\epsilon\norm{y}\norm{z}.
        \]

        The last inequality is proved in~\parencite{karcher1970short}.
    \end{proof}

    \subsection{A comparison lemma}
    We shall proceed in a very similar way to how we did in the case of the first order equation. We will simplify the differential equation to one in one dimension, and there, we will use a comparison theorem for functions of real variable. The only difference is that, in this case, we will not be able to simplify the computations to a Riccati equation. We start by proving the second order version of the Riccati comparison lemma but for the Jacobi equation.
    \begin{lemma}[Jacobi comparison lemma]\label{lemma:comparison_edo}
        Fix a real number $\kappa \in \RR$, and let $\deffun{f, g : [0, r] -> \RR^+;}$ be two functions such that
        \[
            \ddot{f} + \kappa f \leq \ddot{g} + \kappa g \mathrlap{\qquad f(0) = g(0), \dot{f}(0) \leq \dot{g}(0).}
        \]
        Then $f \leq g$ on $[0, \min\set{r, \pi_{\kappa}}]$.
    \end{lemma}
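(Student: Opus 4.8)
The plan is to run a Sturm-type comparison against the model solution $\sn_\kappa$, in the same spirit as \Cref{lemma:comparison} but one order higher. Set $h \defi g - f$; since $f$ and $g$ are twice differentiable, so is $h$, and the hypotheses read $\ddot h + \kappa h \geq 0$ on $[0,r]$, $h(0) = 0$, and $\dot h(0) \geq 0$. The goal is to show $h \geq 0$ on $[0, \min\set{r, \pi_\kappa}]$.

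First I would form the Wronskian of $h$ against the comparison function, $W \defi \dot h\,\sn_\kappa - h\,\dot{\sn}_\kappa$. Using $\ddot{\sn}_\kappa = -\kappa\,\sn_\kappa$ one gets $\dot W = \ddot h\,\sn_\kappa - h\,\ddot{\sn}_\kappa = \pa{\ddot h + \kappa h}\sn_\kappa$, which is nonnegative on $(0,\pi_\kappa)$ because $\sn_\kappa > 0$ there. Hence $W$ is nondecreasing on $[0,\pi_\kappa)$, and since $\sn_\kappa(0) = 0$ and $h(0) = 0$ we have $W(0) = 0$, so $W \geq 0$ on $[0,\pi_\kappa)$. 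On $(0,\pi_\kappa)$ this rewrites as $\frac{\dif}{\dif t}\pa{\lfrac{h}{\sn_\kappa}} = \lfrac{W}{\sn_\kappa^2} \geq 0$, so $h/\sn_\kappa$ is nondecreasing there; its limit as $t \to 0^+$ is $\dot h(0)/\dot{\sn}_\kappa(0) = \dot h(0) \geq 0$ by l'Hôpital. Therefore $h/\sn_\kappa \geq 0$, and since $\sn_\kappa > 0$, also $h \geq 0$ on $(0,\pi_\kappa)$; together with $h(0) = 0$ this gives $h \geq 0$ on $[0,\pi_\kappa)$. If $r < \pi_\kappa$ we are done on all of $[0,r]$; if $\pi_\kappa \leq r$, continuity of $h$ gives $h(\pi_\kappa) = \lim_{t\to\pi_\kappa^-} h(t) \geq 0$, so in both cases $f \leq g$ on $[0,\min\set{r,\pi_\kappa}]$.

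The argument is routine ODE comparison; the only delicate points are the two places where $\sn_\kappa$ vanishes. This is precisely why one works with the Wronskian $W$ rather than dividing by $\sn_\kappa$ outright, why the value of $h/\sn_\kappa$ at the left endpoint must be recovered by l'Hôpital, and why---when $\pi_\kappa$ lies inside the domain---the right endpoint is handled by a separate continuity (closure) step. Note also that, as in \Cref{lemma:comparison}, no positivity of $f$ or $g$ themselves is actually needed; all that matters is $\sn_\kappa > 0$ on the open interval $(0,\pi_\kappa)$.
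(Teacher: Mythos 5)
Your argument is correct, and it reaches the conclusion by a genuinely different route from the paper's. The paper integrates the equation outright: setting $h = g-f$ and $\zeta = \ddot h + \kappa h \ge 0$, it writes the solution of the inhomogeneous initial value problem by variation of parameters as
\[
h(x) = \int_0^x \sn_\kappa(x-t)\,\zeta(t)\,\dif t + \dot h(0)\,\sn_\kappa(x),
\]
using the addition identity $\sn_\kappa(x-t) = \sn_\kappa(x)\sn_\kappa'(t) - \sn_\kappa'(x)\sn_\kappa(t)$, and reads off $h \ge 0$ from the nonnegativity of the kernel $\sn_\kappa(x-t)$ on the triangle $0 \le t \le x \le \pi_\kappa$. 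You instead run a Wronskian (Sturm-type) monotonicity argument: $W = \dot h\,\sn_\kappa - h\,\sn_\kappa'$ satisfies $\dot W = (\ddot h + \kappa h)\sn_\kappa \ge 0$ with $W(0)=0$, so $h/\sn_\kappa$ is nondecreasing with limit $\dot h(0) \ge 0$ at the origin; your handling of the two degenerate endpoints (l'H\^opital at $0$, continuity at $\pi_\kappa$) is exactly right. Both proofs rest on the same two facts---that $\sn_\kappa$ solves the homogeneous equation and is positive on $(0,\pi_\kappa)$. What each buys: your version avoids deriving the explicit solution formula and the addition identity, and it extends verbatim to a variable coefficient $\kappa(t)$ bounded by a constant; the paper's version produces an explicit integral representation of $h$, which is the same Duhamel formula that later yields the closed-form bound $\rho$ in \Cref{prop:comparison_second_order} and \Cref{thm:second_order_bounds}, so it keeps that downstream computation self-contained. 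Your closing observation that positivity of $f$ and $g$ is never needed is also accurate---only positivity of $\sn_\kappa$ on $(0,\pi_\kappa)$ enters, in either proof.
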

    \begin{proof}
        Let $h = g - f$ and $\zeta = \ddot{h} + \kappa h \geq 0$. Let us show that the solution to the linear inhomogeneous initial value problem
        \[
            \ddot{h} + \kappa  h = \zeta\mathrlap{\qquad h(0) = 0,\ \dot{h}(0) \geq 0}
        \]
        is indeed positive on the given interval. Solving the equation, we find that the solution is given by
        \begin{align*}
            h(x)
            &= \sn_\kappa(x)\intf[0][x]{\sn'_\kappa(t)\zeta(t)}{t} - \sn_\kappa'(x)\intf[0][x]{\sn_\kappa(t)\zeta(t)}{t} + \dot{h}(0)\sn_\kappa(x)\\
            &= \intf[0][x]{\sn_\kappa(x-t)\zeta(t)}{t} + \dot{h}(0)\sn_\kappa(x).
        \end{align*}
        where we have used the trigonometric identity
        \[
            \sn_\kappa(x-t) = \sn_\kappa(x)\sn'_\kappa(t) - \sn'_\kappa(x)\sn_\kappa(t).
        \]
        From this we see that $h(x) \geq 0$ on $[0, \min\set{r, \pi_\kappa}]$ as $\sn_\kappa$, $\zeta$ are positive on this interval.
    \end{proof}

    We can now show how to estimate the inhomogeneous differential equation for the Hessian of the exponential, taking advantage of the fact that the initial conditions are both zero.
    \begin{proposition}[Kaul, 1976]\label{prop:comparison_second_order}
        Let $(M, \gm)$ be a Riemannian manifold with bounded sectional curvature $\delta \leq \sec \leq \Delta$.
        Let $\deffun{\gamma : [0, r] -> M;}$ be a geodesic, and let $X, Y$ be vector fields along $\gamma$ with $X, Y \perp \dgamma$ such that
        \[
            \ddot{X} + R(X, \dgamma)\dgamma = Y \mathrlap{\qquad X(0) = 0, \dot{X}(0) = 0.}
        \]
        Assume that there exists a continuous $\eta$ function such that $\norm{Y} \leq \eta$ on $[0,r]$. Then, we have that $\norm{X} \leq \rho$ on $[0, \min\set{r, \pi_{\frac{\Delta + \delta}{2}}}]$, where $\rho$ is the solution of
        \[
            \ddot{\rho}+\delta\rho = \eta \mathrlap{\qquad \rho(0) = 0, \dot{\rho}(0) = 0.}
        \]
    \end{proposition}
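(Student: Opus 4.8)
The plan is to reduce the vector-valued estimate $\norm{X}\le\rho$ to a scalar application of the Jacobi comparison lemma (\Cref{lemma:comparison_edo}) applied to the function $f(t)=\norm{X(t)}$. The key analytic fact is the Kato-type inequality: away from zeros of $\norm{X}$, the scalar function $f=\norm{X}$ satisfies
\[
    \ddot f \le \frac{\scalar{\ddot X, X}}{\norm{X}}.
\]
Indeed, $\dot f = \scalar{\dot X, X}/\norm{X}$, and differentiating again gives $\ddot f = \bigl(\scalar{\ddot X,X}+\norm{\dot X}^2\bigr)/\norm{X} - \scalar{\dot X,X}^2/\norm{X}^3$, and the last two terms combine to $\bigl(\norm{\dot X}^2\norm{X}^2-\scalar{\dot X,X}^2\bigr)/\norm{X}^3\ge 0$ by Cauchy--Schwarz, so they can be dropped to obtain the inequality. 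This is the same trick used implicitly in the proof of Rauch's theorem.

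Next I would control the right-hand side using the differential equation $\ddot X = Y - R(X,\dgamma)\dgamma$ and the curvature bound. Since $X\perp\dgamma$, the vector $R(X,\dgamma)\dgamma$ has the radial-sectional form, and $\scalar{R(X,\dgamma)\dgamma, X} = \sec(X,\dgamma)\norm{X}^2 \ge \delta\norm{X}^2$. Combined with $\abs{\scalar{Y,X}}\le\norm{Y}\norm{X}\le\eta\norm{X}$, this yields
\[
    \scalar{\ddot X, X} = \scalar{Y,X} - \scalar{R(X,\dgamma)\dgamma, X} \le \eta\norm{X} - \delta\norm{X}^2,
\]
hence $\ddot f \le \eta - \delta f$, i.e. $\ddot f + \delta f \le \eta$, on any interval where $f>0$. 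Since $\rho$ solves $\ddot\rho+\delta\rho=\eta$ with $\rho(0)=\dot\rho(0)=0$ and $\eta\ge 0$, one checks $\rho\ge 0$ on $[0,\pi_\delta)$ (e.g. via the integral representation in the proof of \Cref{lemma:comparison_edo}), and note $\pi_\delta \ge \pi_{\frac{\Delta+\delta}{2}}$ since $\delta \le \frac{\Delta+\delta}{2}$ and $\sn$ is increasing in the parameter. The comparison lemma with $\kappa=\delta$ then gives $f\le\rho$ on $[0,\min\{r,\pi_{\frac{\Delta+\delta}{2}}\}]$, provided we can match the initial conditions $f(0)=\rho(0)=0$ and $\dot f(0)\le\dot\rho(0)=0$.

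The main obstacle is the non-smoothness of $f=\norm{X}$ at zeros of $X$, in particular at $t=0$ where $X(0)=0$. The differential inequality $\ddot f+\delta f\le\eta$ only holds where $f>0$, so one cannot apply \Cref{lemma:comparison_edo} directly on $[0,r]$. The standard fix is a perturbation or a careful local argument: either replace $f$ by $f_\varepsilon=\sqrt{\norm{X}^2+\varepsilon^2}$, which is smooth, satisfies a perturbed inequality $\ddot f_\varepsilon+\delta f_\varepsilon\le\eta+C\varepsilon$ (or simply $\le\eta$ after absorbing lower-order terms), and has $f_\varepsilon(0)=\varepsilon$, $\dot f_\varepsilon(0)=0$; apply the comparison lemma to $f_\varepsilon$ against the solution $\rho_\varepsilon$ of the perturbed problem and let $\varepsilon\to 0$. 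Alternatively, since $\dot X(0)=0$ as well, a short Taylor expansion shows $\norm{X(t)}=O(t^2)$ near $0$, so $f$ extends $C^1$ across $t=0$ with $f(0)=\dot f(0)=0$, and on any subinterval where $X$ has isolated zeros one patches the inequality together using continuity of $f$ and $\dot f$. I would present the $\varepsilon$-regularization since it is cleanest and avoids case analysis on the zero set of $X$. Everything else — plugging in the curvature bound, verifying positivity of $\rho$ — is routine given the lemmas already established.
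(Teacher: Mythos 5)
There is a genuine gap at the very first step, and it is fatal to the whole approach. You claim the ``Kato-type inequality'' $\ddot f \le \scalar{\ddot X, X}/\norm{X}$ for $f = \norm{X}$, but your own computation shows the opposite: from
\[
\ddot f = \frac{\scalar{\ddot X, X}}{\norm{X}} + \frac{\norm{\dot X}^2\norm{X}^2 - \scalar{\dot X, X}^2}{\norm{X}^3},
\]
the second term is non-negative by Cauchy--Schwarz (it is $\norm{X}^{-1}$ times the squared norm of the component of $\dot X$ orthogonal to $X$), so dropping it yields $\ddot f \ge \scalar{\ddot X, X}/\norm{X}$ --- a lower bound, which is useless here. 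You need an \emph{upper} bound on $\ddot f$ to feed into~\Cref{lemma:comparison_edo}, and the discarded term works against you: it vanishes only when $\dot X$ stays parallel to $X$, i.e.\ when $X$ does not rotate. Consequently the differential inequality $\ddot f + \delta f \le \eta$ does not follow, and the rest of the argument (including the $\varepsilon$-regularization, which only addresses the secondary issue of smoothness at zeros of $X$) has nothing to stand on. Working with $\norm{X}^2$ instead does not help: $\frac{\dif^2}{\dif t^2}\norm{X}^2 = 2\scalar{\ddot X, X} + 2\norm{\dot X}^2$, and the $\norm{\dot X}^2$ term again enters with the wrong sign.

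This is precisely the difficulty that Kaul's parallel-field device circumvents, and it is the route the paper takes. Fix $t_0$, let $E$ be the unit parallel field along $\gamma$ with $E(t_0) = X(t_0)/\norm{X(t_0)}$, and set $f = \scalar{X, E}$. Then $\ddot f = \scalar{\ddot X, E}$ \emph{exactly} --- no spurious positive term --- while $f \le \norm{X}$ everywhere with equality at $t_0$. The price is that one can no longer write $\scalar{R(X,\dgamma)\dgamma, E}$ as a sectional curvature times $\norm{X}^2$; one must compare against the constant-curvature tensor $R_\mu$ with $\mu = \frac{\Delta+\delta}{2}$ and invoke the Berger-type estimate~\eqref{eq:inequality_R_3}, which produces the inhomogeneous term $\epsilon\norm{X}$ with $\epsilon = \frac{\Delta-\delta}{2}$ and forces a two-step comparison: first against an auxiliary $g$ solving $\ddot g + \mu g = \norm{Y} + \epsilon\norm{X}$, and only then against $\rho$. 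That intermediate step is also where the radius $\pi_{\frac{\Delta+\delta}{2}}$ in the statement comes from; the fact that your one-step argument would deliver the larger radius $\pi_\delta$ should itself have been a warning sign.
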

    \begin{proof}
        We define again the quantities
        \[
            \epsilon = \frac{\Delta - \delta}{2} \qquad \mu = \frac{\Delta + \delta}{2}.
        \]
        Fix a $t_0 \in [0, r]$ and let $E$ be the parallel vector field along $\gamma$ such that $E(t_0) = \frac{X(t_0)}{\norm{X(t_0)}}$. The function $f = \scalar{X, E}$ satisfies the differential equation
        \begin{align*}
            \ddot{f} + \mu f
            &= \scalar{\ddot{X} + \mu X, E}\\
            &= \scalar{Y - R\pa{X, \dgamma}\dgamma + R_{\mu}\pa{X, \dgamma}\dgamma, E}\\
            &\leq \norm{Y} + \epsilon \norm{X}
        \end{align*}
        where we have used~\Cref{lemma:constant_curvature} in the first equality and~\eqref{eq:inequality_R_3} for the bound.

        Define $g$ as the solution to the differential equation
        \[
            \ddot{g} + \mu g = \norm{Y} + \epsilon\norm{X} \mathrlap{\qquad g(0) = \dot{g}(0) = 0.}
        \]
        Using that $f(t_0) = \norm{X(t_0)}$ and that $f(0) = \dot{f}(0) = 0$, by~\Cref{lemma:comparison_edo}, we get that  $\norm{X(t_0)} \leq g(t_0)$, and since the definition of $g$ does not depend on the chosen $t_0$, we may do this for any $t_0 \in [0,r]$ getting that $\norm{X} \leq g$ on $[0, \min\set{\pi_\mu, r}]$. Now, Using that $\norm{X} \leq g$, we get that $g$ satisfies the inequality
        \[
            \ddot{g} + \delta g \leq \norm{Y}\leq \eta.
        \]
        So, for the solution of
        \[
            \ddot{\rho}+\delta\rho = \eta \mathrlap{\qquad \rho(0) = 0, \dot{\rho}(0) = 0.}
        \]
        we have that $g \leq \eta$ on $[0, \min\set{\pi_\delta, r}] \supseteq [0, \min\set{\pi_\mu, r}]$, getting the result.
    \end{proof}

    \subsection{A second order version of Rauch's theorem}
    We are now ready to give bounds on the Hessian of the exponential map. We first note that, since our goal is to bound the norm of the Hessian of $f \circ \exp_p$, as this Hessian is symmetric, it attains its maximum at an eigenvector. As such, we just need to bound the quantity
    \[
        \pa{\conn\dif \exp_p}_v(w,w) \mathrlap{\qquad v \in \tseg, w \in T_pM.}
    \]
    For this reason, we will start by giving bounds on the diagonal of the Hessian of the exponential map. We will then see that, since the Hessian is a symmetric bilinear map, we can leverage these bounds to give bounds on the full Hessian for any pair of vectors $w_1, w_2 \in T_pM$.

    We give the second order bounds on the exponential map for a manifold of $(\delta, \Delta, \Lambda)$-bounded geometry (\cf~\Cref{def:weird_bounded_geometry}).

    \begin{theorem}[Second order bounds for the exponential map]\label{thm:second_order_bounds}
        Let $(M, \gm)$ be a Riemannian manifold with $(\delta, \Delta, \Lambda)$-bounded geometry. For a geodesic $\deffun{\gamma : [0,r] -> M;}$ with initial unit vector $v$, and a vector $w \in T_pM$, we have that
        \begin{itemize}
        \item If $w$ is radial to $\dgamma(0)$,
            \[
                \pa{\conn\dif\exp_p}_{rv}(w, w) = 0.
            \]
        \item If $w$ is normal to $\dgamma(0)$, the radial part of the Hessian is bounded as
            \[
                \pa[\Big]{\frac{1}{r} - \frac{\sn_{4\delta}(r)}{r^2}}\norm{w}^2
                \leq
                \scalar{\pa{\conn\dif \exp_p}_{rv}\pa{w,w}, \dgamma(r)}
                \leq
                \pa[\Big]{\frac{1}{r} - \frac{\sn_{4\Delta}(r)}{r^2}}\norm{w}^2
            \]
            for $r < \pi_\Delta$ for the upper bound and $r < r_{\conjloc}(v)$ for the lower bound.

            The normal part of the Hessian is bounded for $r < \pi_{\frac{\Delta + \delta}{2}}$ as
            \[
                \norm{\normal{\pa{\conn\dif \exp_p}_{rv}\pa{w,w}}}
                \leq
                \rho(r) \norm{w}^2
            \]
            where
            \[
                \rho(t) =
                \lfrac{8}{9r^2}\sn_\delta\pa[\big]{\lfrac{t}{2}}^2
                \pa[\big]{3\Lambda\sn_\delta\pa[\big]{\lfrac{t}{2}}^2
                +2\pa{\Delta - \delta}\sn_\delta\pa{t}}.
            \]
        \end{itemize}
        Both bounds and their radii are tight in spaces of constant curvature.

        Furthermore, the radius of convergence for the normal part is tight for $\SO{n}$.
    \end{theorem}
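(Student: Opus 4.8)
The plan is to treat the three cases of the statement separately, all of them built on the differential equation of \Cref{prop:diff_equation_second_order} for $K(t)\defi\pa{\conn\dif\exp_p}_{tv}\pa{tw,tw}$, namely $\ddot K+R\pa{K,\dgamma}\dgamma+Y=0$ with $K(0)=\dot K(0)=0$, where in the diagonal case $Y=4R\pa{J,\dgamma}\dot J+\pa{\conn_\dgamma R}\pa{J,\dgamma}J+\pa{\conn_J R}\pa{J,\dgamma}\dgamma$ and $J$ is the Jacobi field along $\gamma$ with $J(0)=0$, $\dot J(0)=w$; throughout we use $\pa{\conn\dif\exp_p}_{rv}(w,w)=K(r)/r^2$. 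If $w$ is parallel to $v$, pulling back $\conn_\dgamma\dgamma=0$ along $\exp_p$ and using $\connflat_{\partial_r}\partial_r=0$ in \eqref{eq:conn_minus_conn_flat} gives $\pa{\conn\dif\exp_p}_{rv}(v,v)=0$, hence the claim by bilinearity (equivalently, for such $w$ one has $J(t)=t\norm w\dgamma(t)$, every summand of $Y$ carries a factor $R\pa{\dgamma,\dgamma}\cdot$ or $\pa{\conn_\bullet R}\pa{\dgamma,\dgamma}\cdot$ and vanishes, so $K\equiv0$ by uniqueness). For $w\perp v$, since $J\perp\dgamma$ and $\dot J\perp\dgamma$ along $\gamma$, $\dgamma$ is parallel, and $R(\,\cdot\,,\dgamma)\dgamma$ preserves the normal bundle of $\dgamma$, projecting the equation onto $\dgamma$ and onto its normal complement yields the two decoupled equations $\tfrac{\dif^2}{\dif t^2}\scalar{K,\dgamma}=-\scalar{Y,\dgamma}$ and $\ddot{\normal K}+R\pa{\normal K,\dgamma}\dgamma=-\normal Y$, both with vanishing initial data.

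For the radial part I would evaluate $\scalar{K(t),\dgamma(t)}$ directly on the geodesic variation $c(t,s)=\exp_p\pa{t(v+sw)}$. Writing $\gamma_s(t)=c(t,s)$ one has $K(t)=\conn_{\partial_s}\partial_s\gamma_s\big\vert_{s=0}$, and using $\conn_{\partial_t}\partial_s=\conn_{\partial_s}\partial_t$ together with $\partial_t\scalar{\partial_s\gamma_s,\partial_t\gamma_s}=\tfrac12\partial_s\norm{\partial_t\gamma_s}^2=s\norm w^2$ (so $\scalar{\partial_s\gamma_s,\partial_t\gamma_s}=st\norm w^2$) gives $\scalar{K(t),\dgamma(t)}=t\norm w^2-\tfrac12\tfrac{\dif}{\dif t}\norm{J}^2=t\norm w^2-\Hess r\pa{J(t),J(t)}$. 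Dividing by $t^2$, the radial part of $\pa{\conn\dif\exp_p}_{rv}(w,w)$ is $\pa{\tfrac1r-\tfrac{\Hess r(J(r),J(r))}{r^2}}\norm w^2$, so the asserted inequalities reduce to $\sn_{4\Delta}(r)\norm w^2\le\Hess r\pa{J(r),J(r)}\le\sn_{4\delta}(r)\norm w^2$. Since $\Hess r(J,J)=\scalar{\dot J,J}$, combining the pointwise Hessian bounds of \Cref{thm:rauch} ($\ct_\Delta\le\Hess r\le\ct_\delta$ on unit normal vectors, valid respectively for $r<\pi_\Delta$ and for $r<r_{\conjloc}(v)$) with the first-order bounds of \Cref{thm:first_order_bounds} on $\norm{J(r)}$ gives these estimates via the identity $\sn_\kappa\sn'_\kappa=\sn_{4\kappa}$, i.e. $\pa{\sn_\kappa^2}'=2\sn_{4\kappa}$. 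In constant curvature $\kappa$ one has $\Hess r(J,J)=\ct_\kappa(r)\norm{J}^2=\ct_\kappa(r)\sn_\kappa(r)^2\norm w^2=\sn_{4\kappa}(r)\norm w^2$, so both the bound and the radius are tight.

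The normal part is the main step. I would feed $\normal K$ into \Cref{prop:comparison_second_order} with inhomogeneous term $-\normal Y$: the output is $\norm{\normal K(t)}\le\rho_0(t)$ on $[0,\min\set{r,\pi_{(\Delta+\delta)/2}}]$, where $\rho_0$ solves $\ddot\rho_0+\delta\rho_0=\eta$, $\rho_0(0)=\dot\rho_0(0)=0$, and $\eta$ is any continuous majorant of $\norm{\normal Y}$. To build a tight $\eta$: first, the two covariant-derivative summands of $Y$ combine into exactly the quantity controlled by \Cref{def:weird_bounded_geometry} with $u=\dgamma$, $w=J$, so $\norm{\pa{\conn_\dgamma R}\pa{J,\dgamma}J+\pa{\conn_J R}\pa{J,\dgamma}\dgamma}\le2\Lambda\norm J^2$; second, since $\dot J\perp\dgamma$ the model term $R_\mu\pa{\dgamma,J}\dot J=\mu\scalar{\dot J,J}\dgamma$ is radial, so $\normal{\pa{4R\pa{J,\dgamma}\dot J}}=-4\,\normal{\pa{(R-R_\mu)\pa{\dgamma,J}\dot J}}$, which by \eqref{eq:bound_R_normal} has norm at most $\tfrac{16}{3}\epsilon\norm J\norm{\dot J}=\tfrac83(\Delta-\delta)\norm J\norm{\dot J}$. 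Bounding $\norm J$ and $\norm{\dot J}$ on the relevant range by the first-order comparison estimates ($\norm{J(t)}\le\sn_\delta(t)\norm w$, and, via $\dot J=\conn_J\grad r$ together with the Riccati bounds for $\Hess r$, $\norm{\dot J(t)}\le\sn'_\delta(t)\norm w$) gives $\eta(t)=\pa{\tfrac83(\Delta-\delta)\sn_\delta(t)\sn'_\delta(t)+2\Lambda\sn_\delta(t)^2}\norm w^2$. Solving for $\rho_0$ by the variation-of-parameters formula from the proof of \Cref{lemma:comparison_edo}, $\rho_0(t)=\intf[0][t]{\sn_\delta(t-s)\eta(s)}{s}$, and evaluating the integrals by half-angle identities for $\sn_\delta$ — the key one being $\intf[0][t]{\sn_\delta(s)}{s}=2\sn_\delta(t/2)^2$, whence $\intf[0][t]{\sn_\delta(t-s)\sn_\delta(s)^2}{s}=\tfrac43\sn_\delta(t/2)^4$ and $\intf[0][t]{\sn_\delta(t-s)\sn_\delta(s)\sn'_\delta(s)}{s}=\tfrac23\sn_\delta(t/2)^2\sn_\delta(t)$ — yields $\rho_0(t)=\tfrac89\sn_\delta(t/2)^2\pa{3\Lambda\sn_\delta(t/2)^2+2(\Delta-\delta)\sn_\delta(t)}\norm w^2$; dividing by $r^2$ gives the stated $\rho(r)$.

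For tightness: in constant curvature $\delta=\Delta=\kappa$ one has $\epsilon=\Lambda=0$, hence $\normal Y=0$ and $\normal K\equiv0$, which matches $\rho\equiv0$; that the radius $\pi_{(\Delta+\delta)/2}$ cannot be enlarged is shown by the explicit computation on $\SO{n}$ with a bi-invariant metric carried out in \Cref{sec:concrete_bounds}, where the governing Jacobi-type solution first degenerates precisely at that radius. The hard part is the normal part — separating the genuinely $(\Delta-\delta)$-small contribution of $R\pa{J,\dgamma}\dot J$ from the curvature-derivative contribution (recognizing the latter as the $\Lambda$-quantity of \Cref{def:weird_bounded_geometry}), choosing the majorant $\eta$ tightly enough that the linear comparison equation integrates to a closed trigonometric expression, and tracking carefully the radius on which each first-order comparison estimate for $J$ and $\dot J$ is available.
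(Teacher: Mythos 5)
Your proposal is correct and follows essentially the same route as the paper's proof: the same decoupling of the equation from \Cref{prop:diff_equation_second_order} into radial and normal parts, the same reduction of the radial part to $\Hess r(J,J)$ bounded via \Cref{thm:rauch}, and the same application of \Cref{prop:comparison_second_order} with the majorant of $\norm{\normal{Y}}$ built from \Cref{def:weird_bounded_geometry} and \eqref{eq:bound_R_normal}. The only differences are presentational --- you obtain $\scalar{K,\dgamma}=t\norm{w}^2-\Hess r(J,J)$ from the first variation of $\norm{\partial_t\gamma_s}^2$ rather than from Gauss's lemma, and you verify the closed form of $\rho$ by explicitly evaluating the variation-of-parameters integrals, which the paper only asserts.
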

    \begin{proof}
        As in~\Cref{prop:diff_equation_second_order}, we write
        \begin{align*}
            \tilde{J}(t, s)&= \pa{\dif \exp_p}_{t(v + sw)}\pa[\Big]{\frac{t}{r}w}\\
            J(t) &= \tilde{J}(t,0) \\
            K(t) &=\pa{\conn \dif \exp_p}_{tv}\pa[\Big]{\frac{t}{r}w, \frac{t}{r}w}.
        \end{align*}
        By linearity of the Hessian, it is enough to prove the result for $\norm{w} = 1$.

        If $w$ is radial, in~\Cref{prop:diff_equation_second_order} we have that $Y = 0$, so $K$ is a solution of the equation
        \[
            \ddot{K} + R(K, \dgamma)\dgamma = 0 \mathrlap{\qquad K(0) = 0, \dot{K}(0) = 0}
        \]
        and since it is a homogeneous second order linear equation with zero as the initial condition, its solution is $K(t) = 0$ for $t \in [0, r]$.

        If $w$ is normal, we need to bound the quantity $\conn_J \tilde{J}$. Note that this is a vector field along $\gamma$, but to take the derivative in the direction of $J$ we need to have $\tilde{J}$ defined in that direction as well. We start by bounding its radial part
        \begin{equation}\label{eq:radial_part_hessian}
            \scalar{\conn_J \tilde{J}, \grad r}
            = D_J\scalar{\tilde{J}, \grad r}- \scalar{\tilde{J}, \conn_J \grad r}
            = D_J\scalar{\tilde{J}, \grad r}- \Hess r(J, J).
        \end{equation}
        We can compute the first term directly. By Gauss's lemma we can simplify this derivative to one in $T_pM$
        \[
            D_J\scalar{\tilde{J}, \grad r}(\gamma(t))
            = \frac{1}{r}\left.\frac{\dif}{\dif s}\right\vert_{s=0}\scalar[\Big]{\frac{t}{r}w, \frac{t\pa{v+sw}}{\norm{t\pa{v +sw}}_{T_pM}}}_{T_pM}
            = \frac{t}{r^2}.
        \]
        Using the bounds on the Hessian of the distance function given in~\Cref{thm:rauch} we can bound the second term in~\eqref{eq:radial_part_hessian}. Evaluating these two quantities at $r$ and using the bounds on the size of the Jacobi fields together with the trigonometric identity
        \begin{equation}\label{eq:trig}
            \sn_\kappa(t)\sn'_\kappa(t) = \frac{\sn_\kappa(2t)}{2} = \sn_{4\kappa}(t)
        \end{equation}
        we get the bounds on the tangential part of the Hessian of the exponential.

        Finally, for its normal part, consider the differential equation given by~\Cref{prop:diff_equation_second_order}. Since $\scalar{K, \dgamma}' = \scalar{\dot{K},\dgamma}$, the radial (resp.\ normal) part of the derivative is the derivative of the radial (resp.\ normal) part. For this reason, we have that
        \[
            \pa{\normal{K}}'' + R(\normal{K}, \dgamma)\dgamma = -\normal{Y}.
        \]
        Therefore, we just have to bound $\norm{\normal{Y}}$ to be able to use~\Cref{prop:comparison_second_order} and finish.

        We start by giving a bound on the norm of $\dot{J}$. Using that $\dot{J} = \conn_J\grad r = \Hess r(J)$,
        \begin{equation}\label{eq:bound_derivative_jac}
            \norm{\dot{J}} \leq \norm{\Hess r} \norm{J}.
        \end{equation}
        We can then bound the norm of $\normal{Y}$ as
        \begin{align*}
            \norm{\normal{Y}}
            &\leq
             \norm{\pa{\conn_{\dgamma} R}\pa{J, \dgamma}J +
             \pa{\conn_J R}\pa{J, \dgamma}\dgamma}
              + 4 \norm{\normal{R}\pa{J, \dgamma}\dot{J}}\\
            &\leq 2\Lambda \norm{J}^2 + \frac{8(\Delta - \delta)}{3}\norm{J}\norm{\dot{J}}\\
            &\leq \pa[\Big]{2\Lambda + \frac{8(\Delta - \delta)}{3}\ct_\delta(t)}\norm{J}^2\\
            &\leq \pa[\Big]{2\Lambda + \frac{8(\Delta - \delta)}{3}\ct_\delta(t)}\frac{\sn_\delta(t)^2}{r^2}\\
            &= \frac{2}{r^2}\pa[\Big]{\Lambda\sn_\delta(t)^2 + \frac{2(\Delta-\delta)}{3}\sn_\delta(2t)}.
        \end{align*}
        where we have used~\eqref{eq:bound_R_normal}---as since $J$ is perpendicular to $\dgamma$, so is $\dot{J}$---to bound the normal part of the curvature tensor. We have also used the first order bound on the differential of the exponential for perpendicular initial conditions to bound the norm of the Jacobi fields and~\eqref{eq:bound_derivative_jac} to bound their derivative. In the last equality we have used~\eqref{eq:trig} again.

        The result follows by noting that $\rho$ is the solution to the differential equation
        \[
            \ddot{\rho} + \delta \rho = \eta \mathrlap{\qquad \rho(0) = \dot{\rho}(0)=0}
        \]
        where $\eta$ is the bound on $\norm{\normal{Y}}$ and applying~\Cref{prop:comparison_second_order}.

        We will see that the radius for the bound of the normal part is tight in the case of $\SO{n}$ in~\Cref{ex:son}.
    \end{proof}

    The first thing to note is that these bounds go to zero as $r$ tends to zero. This is exactly what we expect, as the Christoffel symbols in normal coordinates vanish at the origin.

    Bounds on the whole Hessian can be easily deduced from those presented here via polarization.

    The bounds in this theorem provide a notable improvement compared to the best bounds previously known (\cf~\parencite{kaul1976schranken}). For one, these bounds are tighter. We also have that these bounds are explicit, compared to the previous bounds, which were given in terms of a solution of a differential equation that did not have an explicit integral. We also notably simplified the technical tools necessary to get to these bounds.

    We finish this section by giving a bound on the full Hessian of the exponential.
    \begin{theorem}[Bounds on the Full Hessian]\label{thm:full_second_order_bounds}
        Let $(M, \gm)$ be a Riemannian manifold with $(\delta, \Delta, \Lambda)$-bounded geometry. For a geodesic $\deffun{\gamma : [0,r] -> M;}$ with initial unit vector $v$, $r < \pi_{\frac{\Delta + \delta}{2}}$, and any two vectors $w_1, w_2 \in T_pM$, we have that
        \[
            \norm{\pa{\conn\dif \exp_p}_{rv}\pa{w_1,w_2}} \leq
                \lfrac{8}{3r^2}\sn_\delta\pa[\big]{\lfrac{r}{2}}^2
                \pa{\Lambda\sn_\delta\pa[\big]{\lfrac{r}{2}}^2
                +2\max\set{\abs{\Delta}, \abs{\delta}}\sn_\delta\pa{r}}\norm{w_1}\norm{w_2}.
        \]
        Furthermore, the radius $\pi_{\frac{\Delta + \delta}{2}}$ is tight for $\SO{n}$.
    \end{theorem}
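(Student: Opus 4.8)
The plan is to deduce this from \Cref{thm:second_order_bounds} by polarization together with a splitting of the arguments into radial and normal parts. Since $(\conn\dif\exp_p)_{rv}$ is, by~\eqref{eq:conn_minus_conn_flat}, a \emph{symmetric} bilinear map into $T_{\exp_p(rv)}M$, and for any symmetric bilinear map $B$ into a normed space
\[
    \sup_{w_1,w_2\neq 0}\frac{\norm{B(w_1,w_2)}}{\norm{w_1}\norm{w_2}}=\sup_{w\neq 0}\frac{\norm{B(w,w)}}{\norm{w}^2}
\]
(write $B(w_1,w_2)=\tfrac14\bigl(B(\lambda w_1+\lambda^{-1}w_2,\lambda w_1+\lambda^{-1}w_2)-B(\lambda w_1-\lambda^{-1}w_2,\lambda w_1-\lambda^{-1}w_2)\bigr)$, use the triangle inequality and the parallelogram law, and optimize over $\lambda>0$), it suffices to bound $\norm{(\conn\dif\exp_p)_{rv}(w,w)}$ for unit $w\in T_pM$. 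Decompose $w=\paral{w}+\normal{w}$ with $\paral{w}=\scalar{w,v}v$ and $\normal{w}\perp v$. Since $(\conn\dif\exp_p)_{rv}(v,v)=0$ by \Cref{thm:second_order_bounds}, bilinearity gives
\[
    (\conn\dif\exp_p)_{rv}(w,w)=(\conn\dif\exp_p)_{rv}(\normal{w},\normal{w})+2\scalar{w,v}\,(\conn\dif\exp_p)_{rv}(v,\normal{w}).
\]

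The first term is controlled directly by \Cref{thm:second_order_bounds}: the $\dgamma(r)$-component of $(\conn\dif\exp_p)_{rv}(\normal{w},\normal{w})$ has absolute value at most $\max\set{\abs{\tfrac1r-\tfrac{\sn_{4\delta}(r)}{r^2}},\abs{\tfrac1r-\tfrac{\sn_{4\Delta}(r)}{r^2}}}\norm{\normal{w}}^2$ and its normal component has norm at most $\rho(r)\norm{\normal{w}}^2$. For the cross term I would use the identity $(\conn\dif\exp_p)_{rv}(v,u)=\conn_{\partial_t}\bigl[(\dif\exp_p)_{tv}(u)\bigr]\big|_{t=r}$, which follows from the Leibniz rule~\eqref{eq:conn_minus_conn_flat} along the ray $t\mapsto tv$ (the section $u$ being $\connflat$-parallel there). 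Writing $(\dif\exp_p)_{tv}(u)=\tfrac1tJ(t)$ for the Jacobi field $J$ along $\gamma$ with $J(0)=0$, $\dot J(0)=u$, this becomes $(\conn\dif\exp_p)_{rv}(v,u)=\tfrac1r\bigl(\dot J(r)-\tfrac1rJ(r)\bigr)=\tfrac1{r^2}L(r)$ where $L(t)\defi t\dot J(t)-J(t)$; from the Jacobi equation, $\dot L(t)=t\ddot J(t)=-t\,R(J,\dgamma)\dgamma$ with $L(0)=0$. When $u\perp v$ the field $J$ stays normal to $\dgamma$, the endomorphism $X\mapsto R(X,\dgamma)\dgamma$ of $\dgamma^{\perp}$ is self-adjoint with eigenvalues the sectional curvatures $\sec(\cdot,\dgamma)\in[\delta,\Delta]$, so $\norm{R(J,\dgamma)\dgamma}\le\max\set{\abs{\Delta},\abs{\delta}}\norm{J}$; integrating and using the first-order bound $\norm{J(t)}\le\sn_\delta(t)\norm{u}$ of \Cref{thm:first_order_bounds} yields $\norm{(\conn\dif\exp_p)_{rv}(v,u)}\le\tfrac12\max\set{\abs{\Delta},\abs{\delta}}\sn_\delta(r)\norm{u}$.

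It remains to assemble these estimates. Writing $\norm{(\conn\dif\exp_p)_{rv}(w,w)}$ as the Euclidean norm of its $\dgamma(r)$-component and its normal component, using $\norm{\paral{w}}^2+\norm{\normal{w}}^2=1$, and noting that the right-hand side of the statement is---by the very computation in the proof of \Cref{thm:second_order_bounds}---the value at $r$ of the solution of $\ddot\rho+\delta\rho=\tfrac2{r^2}\bigl(\Lambda\sn_\delta(t)^2+2\max\set{\abs{\Delta},\abs{\delta}}\sn_\delta(2t)\bigr)$ with $\rho(0)=\dot\rho(0)=0$, the claim reduces to a single inequality among the generalized trigonometric functions $\sn_\kappa,\ct_\kappa$ on $(0,\pi_{\frac{\Delta+\delta}2})$; this is where the elementary bound $\Delta-\delta\le 2\max\set{\abs{\Delta},\abs{\delta}}$ is used, which is exactly what accounts for the coefficient $\tfrac{2(\Delta-\delta)}3$ of \Cref{thm:second_order_bounds} being replaced here by $2\max\set{\abs{\Delta},\abs{\delta}}$, and the identity $\sn_\kappa(t)\sn'_\kappa(t)=\tfrac12\sn_\kappa(2t)$ of~\eqref{eq:trig} is used throughout. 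I expect this final trigonometric verification to be the main obstacle, since one must check it on the whole interval and not merely asymptotically as $r\to 0$. The tightness of the radius $\pi_{\frac{\Delta+\delta}2}$ for $\SO{n}$ is inherited from \Cref{thm:second_order_bounds} (see \Cref{ex:son}), and tightness in constant curvature follows since there all the intermediate inequalities become equalities.
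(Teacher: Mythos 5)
Your overall scheme---polarization to reduce to the diagonal, then splitting $w$ into radial and normal parts---is a genuinely different route from the paper's, and your cross-term identity $\pa{\conn\dif \exp_p}_{rv}(v,u)=\tfrac{1}{r^2}\pa{r\dot J(r)-J(r)}$ is correct and rather elegant. The paper avoids the radial/normal splitting entirely: after the same polarization step it takes $w\perp v$, returns to the inhomogeneous Jacobi-type equation of \Cref{prop:diff_equation_second_order} for the \emph{full} vector $K(t)=\pa{\conn\dif\exp_p}_{tv}\pa{\tfrac{t}{r}w,\tfrac{t}{r}w}$, and bounds $\norm{Y}$ (not merely $\norm{\normal{Y}}$) by replacing the normal-curvature estimate \eqref{eq:bound_R_normal} with the full-curvature estimate \eqref{eq:bound_R_all}; a single application of \Cref{prop:comparison_second_order} then yields the stated $\rho$ directly. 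In particular the factor $2\max\set{\abs{\Delta},\abs{\delta}}$ enters through \eqref{eq:bound_R_all}, not through the inequality $\Delta-\delta\le 2\max\set{\abs{\Delta},\abs{\delta}}$ as you conjecture.

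The genuine gap is the final assembly step, which you explicitly leave unverified, and it is not a routine verification. Writing $\norm{\paral{w}}=a$, $\norm{\normal{w}}=b$ with $a^2+b^2=1$, you must show that $\sqrt{\sigma(r)^2+\rho(r)^2}\,b^2+ab\max\set{\abs{\Delta},\abs{\delta}}\sn_\delta(r)$ is dominated by the claimed right-hand side on the whole interval, and the margin is thin: already at leading order as $r\to 0$ with $\delta=-\Delta>-\infty$, your combined bound maximizes to $\pa{\tfrac{5}{9}+\tfrac{\sqrt{181}}{18}}\abs{\Delta}r\approx 1.30\,\abs{\Delta}r$ against the target $\tfrac{4}{3}\abs{\Delta}r\approx 1.33\,\abs{\Delta}r$, so the inequality survives but barely, and nothing in your argument establishes it away from $r=0$. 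A second, independent problem is the range of validity of your ingredients: the radial-part bounds of \Cref{thm:second_order_bounds} invoked for the $\pa{\normal{w},\normal{w}}$ term hold only for $r<\pi_\Delta$ (upper) and $r<r_{\conjloc}(v)$ (lower), whereas the theorem is claimed for all $r<\pi_{\frac{\Delta+\delta}{2}}\geq\pi_\Delta$; on $[\pi_\Delta,\pi_{\frac{\Delta+\delta}{2}})$ your decomposition has no control over the radial component. The paper's single-ODE argument sidesteps both issues, which is precisely why it trades the sharper normal-part constant $\tfrac{4}{3}\pa{\Delta-\delta}$ for the cruder but uniform bound \eqref{eq:bound_R_all}. (To your credit, your explicit treatment of the cross term $\pa{v,\normal{w}}$ addresses a case that the paper's own write-up, which only discusses $w\perp v$ after polarization, passes over in silence.)
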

    \begin{proof}
        To bound the full Hessian we first see that it is just enough to bound its diagonal part. For any symmetric bilinear form and any two vectors we have the polarisation formula
        \[
            4\Phi(u,v) = \Phi(u+v, u+v) - \Phi(u-v, u-v).
        \]
        By linearity, we may assume that $\norm{u} = \norm{v} = 1$. Taking absolute values and applying the triangle inequality and Cauchy-Schwarz, we get that
        \[
            4\norm{\Phi(u,v)} \leq \norm{\Phi \vert_{\operatorname{diag}}}\pa{\norm{u+v}^2 + \norm{u-v}^2} = 4\norm{\Phi \vert_{\operatorname{diag}}}
        \]
        where $\norm{\Phi \vert_{\operatorname{diag}}}$ is the operator norm of the application $u \mapsto \Phi(u,u)$. For this reason, it is enough to bound the map $u \mapsto \pa{\conn\dif\exp_p}_{tv}\pa{u,u}$.

        Let $w$ be a vector normal to $v$. As we did in~\Cref{thm:second_order_bounds}, we consider the differential equation for
        \[
            K(t) =\pa{\conn \dif \exp_p}_{tv}\pa[\Big]{\frac{t}{r}w, \frac{t}{r}w}.
        \]
        This is exactly the same differential equation that we had for $\normal{K}$, only that rather than having to bound the normal part of the curvature tensor, we bound the full curvature tensor. For that we use~\eqref{eq:bound_R_all}, and following with the bounds as we did in~\Cref{thm:second_order_bounds} and solving the resulting equation we get that for $w$ perpendicular to $v$, the norm of $K$ is bounded by the solution of the differential equation
        \[
            \ddot{\rho} + \delta \rho
        = \frac{2}{r^2}\pa{\Lambda\sn_\delta(t)^2 + 2\max\set{\abs{\Delta},\abs{\delta}}\sn_\delta(2r)}
             \mathrlap{\qquad \rho(0) = \dot{\rho}(0)=0}
        \]
        which is solved by
        \[
            \rho(t) =
                \lfrac{8}{3r^2}\sn_\delta\pa[\big]{\lfrac{t}{2}}^2
                \pa{\Lambda\sn_\delta\pa[\big]{\lfrac{t}{2}}^2
                +2\max\set{\abs{\Delta}, \abs{\delta}}\sn_\delta\pa{t}}.\qedhere
        \]
    \end{proof}

    \begin{remark}[Tighter bounds]
        Another way to obtain bounds on the full Hessian of the exponential would be to take the bounds from~\Cref{thm:second_order_bounds} for the normal and parallel part of the Hessian and using the Cauchy-Schwarz inequality to get bounds of the form
        \[
            \norm{\pa{\conn\dif \exp_p}_{rv}\pa{w_1,w_2}} \leq
                \sqrt{\sigma^2 + \rho^2}\norm{w_1}\norm{w_2}.
        \]
        Where $\rho$ is the bound of the normal part defined in~\Cref{thm:second_order_bounds} and
        \[
            \sigma = \frac{1}{r^2}\max\set{\abs{r-\sn_{4\delta}(r)}, \abs{r-\sn_{4\Delta}(r)}}
        \]
        that is, $\sigma$ is the bound on the norm of the parallel part of the Hessian.

        This bound, although tighter in most specific examples, might be more difficult to manipulate and lacks the simplicity of that presented in the theorem above.
    \end{remark}

\subsection{Concrete second order bounds}\label{sec:concrete_bounds}
\subsubsection{Constant curvature: Sphere, hyperbolic space, and Euclidean space}
    The simplest family to evaluate these bounds in is that of the spaces of constant curvature such as the sphere, the hyperbolic plane, the Euclidean space or the flat torus.

    For a manifold of constant curvature $\kappa$, since the curvature is constant, the derivative of the curvature tensor is zero. If we further assume that their injectivity radius is positive\footnote{This is not really used to prove the bounds, but it is part of the definition of bounded geometry.}, as is the case of the hyperbolic space and the sphere, we have that they have $(\kappa, \kappa, 0)$-bounded geometry. Instantiating the bounds for these spaces we see that
    \begin{align*}
        \scalar{\pa{\conn\dif \exp_p}_{rv}\pa{w,w}, \dgamma(r)}
        &=
        \pa[\Big]{\frac{1}{r} - \frac{\sn_{4\kappa}(r)}{r^2}}\norm{w}^2 \\
        \norm{\normal{\pa{\conn\dif \exp_p}_{rv}\pa{w,w}}}
        &= 0
    \end{align*}
    for $r < \pi_\kappa$. As a first sanity check, we see that for the flat case $\kappa = 0$, the radial part is exactly equal to zero, as de whole Hessian of the exponential map is everywhere zero in this case---the second derivative of an affine function is zero.

    To check that this solution is actually correct, we can solve the differential equation in~\Cref{prop:diff_equation_second_order} for these spaces for
    \[
        K(t) \defi \pa{\conn \dif \exp_p}_{tv}(tw_1, tw_2).
    \]
    Using~\Cref{lemma:constant_curvature}, together with the fact that, since the sectional curvature is constant, $\conn R = 0$, and the trigonometric identity
    \[
        \sn_\kappa(t)\sn'_\kappa(t) = \frac{\sn_\kappa(2t)}{2} = \sn_{4\kappa}(t)
    \]
    we have that the differential equation for the constant curvature case is given by
    \[
        \ddot{K}(t) + \kappa \normal{K}(t) = 4\kappa\sn_{4\kappa}(t)\dgamma(t)\mathrlap{\qquad K(0) = 0,\ \dot{K}(0) = 0}
    \]
    We can split this equation into its normal and radial part. The normal part is clearly zero, since the right hand side is zero, and the remaining equation is linear with zero as the initial condition. For the radial part, setting $x = \scalar{K, \dgamma}$ we get
    \[
        \ddot{x}(t) = 4\kappa\sn_{4\kappa}(t)\mathrlap{\qquad x(0) = 0,\ \dot{x}(0) = 0.}
    \]
    Finally, $\frac{x(r)}{r^2} = \frac{r - \sn_{4\kappa}(r)}{r^2}$ is exactly the value announced before.

\subsubsection{Locally symmetric spaces: Orthogonal group, Symmetric positive definite matrices, and Grassmannian manifold}
Locally symmetric spaces define a large family of particularly well-behaved Riemannian manifolds. Examples of these spaces are the flat torus, the orthogonal group (or any compact Lie group with a bi-invariant metric), the space of symmetric positive definite matrices, the Grassmannian, the oriented Grassmannian and the hyperbolic Grassmannian.\footnote{All these manifolds are to be regarded as Riemannian manifolds with the metric inherited from their quotient structure}

    We recall the algebraic definition of a locally symmetric space.
    \begin{definition}
        A Riemannian manifold $(M, \gm)$ is \emph{locally symmetric} if the curvature tensor is covariantly constant, that is, $\conn R = 0$.
    \end{definition}

    Locally symmetric spaces were introduced and studied by Cartan in 1926, who also gave a complete classification of these in 1932.

    The most notable examples of locally symmetric spaces are symmetric spaces which are one of the most important families of real manifolds in Riemannian geometry. These were intensively studied by Sigurður Helgason~\parencite{helgason1978differential}.
    \begin{definition}
        A Riemannian manifold $(M, \gm)$ is a \emph{symmetric space} if, for every point $p \in M$ there exists an involutive isometry $\sigma_p$ that fixes $p$, that is
        \[
            \sigma_p(p) = p \qquad \pa{\dif \sigma_p}_0 = - \Id.
        \]
    \end{definition}
    As the name implies, one may prove that symmetric spaces are indeed locally symmetric spaces.

    For these manifolds we have the following result.
    \begin{proposition}
        A symmetric space with bounded sectional curvature $\delta \leq \sec \leq \Delta$ is of $(\delta, \Delta, 0)$-bounded geometry.
    \end{proposition}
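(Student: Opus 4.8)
The plan is to verify the three conditions in \Cref{def:weird_bounded_geometry} in turn. The bound $\delta \leq \sec \leq \Delta$ is exactly the hypothesis, so nothing is needed there. For the curvature-derivative condition with $\Lambda = 0$, I would invoke the fact recalled just above the statement that a symmetric space is locally symmetric, i.e. $\conn R = 0$. Then for any $p \in M$ and any $u, w \in T_pM$, both tensors $\conn_u R$ and $\conn_w R$ vanish identically, hence so does $\pa{\conn_u R}\pa{w,u}w + \pa{\conn_w R}\pa{w,u}u$, and the required inequality holds trivially with $\Lambda = 0$.

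The only substantive point is to show that a symmetric space has positive injectivity radius, $\inj = \inf_{p \in M} \inj(p) > 0$. First I would note that a symmetric space is geodesically complete: if a geodesic $\gamma$ is defined on a maximal interval and $t_0$ is an interior point, the geodesic symmetry $\sigma_{\gamma(t_0)}$ reflects $\gamma$ about $\gamma(t_0)$ and thereby extends it beyond $t_0$, forcing the maximal interval to be all of $\RR$; by Hopf--Rinow, $(M, \gm)$ is complete. Next I would establish that $\Iso(M)$ acts transitively on $M$: given $p, q \in M$, use completeness (and connectedness, assumed throughout) to pick a minimizing geodesic $\gamma \colon [0,1] \to M$ with $\gamma(0) = p$ and $\gamma(1) = q$; let $m = \gamma(\tfrac12)$ be its midpoint, and observe that the symmetry $\sigma_m$, having $\pa{\dif \sigma_m}_m = -\Id$, reverses $\gamma$ and therefore satisfies $\sigma_m(q) = p$. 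Thus the isometry group acts transitively.

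Finally, since the injectivity radius at a point is invariant under isometries, $\inj(p) = \inj(q)$ for all $p, q \in M$; call this common value $R$. The injectivity radius at any single point of any Riemannian manifold is positive, as every point admits a normal neighbourhood, so $R > 0$, whence $\inj = R > 0$. Assembling the three verifications shows that $(M, \gm)$ has $(\delta, \Delta, 0)$-bounded geometry. I do not anticipate a genuine obstacle: the proof is a matter of combining standard structural facts about symmetric spaces; the only place requiring mild care is deducing completeness and homogeneity from the pointwise definition of a symmetric space rather than assuming them outright, and that is precisely the step I would flag as the crux.
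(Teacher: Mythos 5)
Your proposal is correct and follows essentially the same route as the paper: the curvature-derivative bound with $\Lambda = 0$ comes from $\conn R = 0$ for (locally) symmetric spaces, and the positive injectivity radius comes from homogeneity making $\inj(p)$ constant. The only difference is that you supply the standard midpoint-symmetry argument for transitivity of the isometry group, which the paper simply cites as a known structural fact.
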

    \begin{proof}
        Since it is a locally symmetric space, we have that $\conn R = 0$, so the first order bound is clear. Since symmetric spaces are Riemannian homogeneous spaces, for every two points there exists an isometry taking one to the other. As such, the injectivity radius is constant throughout the manifold and thus, positive.
    \end{proof}

    From this, we get that we just need to compute bounds on the sectional curvature for these manifolds in order to give second order bounds for the exponential map. Now, bounds on the sectional curvature of these manifolds are well known. We give here some examples that are particularly useful in the context of optimization.

    \begin{example}[The special orthogonal group]\label{ex:son}
        The sectional curvature for a Lie group with a bi-invariant metric, after identifying any pair of tangent vector with vectors in the Lie algebra, is given for a pair of orthonormal vectors $X, Y \in \glie$
    \[
        \sec(X,Y) = \frac{1}{4}\norm{\cor{X, Y}}^2.
    \]
    In the special case of $\SO{n}$ for $n > 2$, we have that $\glie \iso \Skew{n}$.
    It is clear that the sectional curvature is non-negative for any bi-invariant metric. In the case of $\SO{n}$ this bound is tight.

    Consider the bi-invariant metric given by the Frobenius norm---the metric inherited from $\RR^n$. For the upper bounds, if we work with a matrix Lie group, it is enough to bound the norm of $XY - YX$ for matrices $X, Y$ of Frobenius norm $1$. In the general case, this inequality is called the Böttcher-Wenzel inequality and it reads
    \[
        \norm{\cor{X,Y}} \leq 2\norm{X}\norm{Y} \mathrlap{\qquad \forall X, Y \in \M{n}.}
    \]
    For a review of this inequality and a particularly clean proof see~\parencite{lu2012remarks}.

    For the case of $\SO{n}$, that is, when $X, Y$ are skew-symmetric, this inequality can be improved~\parencite{bloch2005commutators}
    \[
        \norm{\cor{X,Y}} \leq \norm{X}\norm{Y} \mathrlap{\qquad \forall X, Y \in \Skew{n}.}
    \]
    For $n = 3$ the constant can be further improved to $\frac{1}{2}$. These constants are tight.

    Wrapping all this together, we get that the bounds for $\SO{n}$ with $n > 2$ are given by
    \begin{alignat*}{2}
        0
        \leq
        \scalar{\pa{\conn\dif \exp_p}_{rv}\pa{w,w}, \dgamma(r)}
        &\leq
        \pa[\Big]{\frac{1}{r} - \frac{\sin\pa{r}}{r^2}}\norm{w}^2
        &&\qquad r < 2\pi\\
        \norm{\normal{\pa{\conn\dif \exp_p}_{rv}\pa{w,w}}}
        &\leq
        \frac{r}{9}\norm{w}^2
        &&\qquad r < 2\sqrt{2}\pi.
    \end{alignat*}
    Note that the radius of definition of the normal part of the Hessian are tight, as the conjugate radius of $\SO{n}$ is exactly $2\sqrt{2}\pi$. This can be seeing by noting that the exponential of matrices is not full rank at matrices with two eigenvalues that are $2\pi i $ apart.

    In particular, we can give a bound on the full Hessian by bounding the derivative of $\sqrt{\sigma^2 + \rho^2}$ where $\sigma$ and $\rho$ are the bounds on the tangential and normal part of the Hessian. This gives
    \[
        \norm{\pa{\conn \dif \exp_p}_{rv}(w_1,w_2)} \leq \frac{2r}{9}\norm{w_1}\norm{w_2} \qquad{r < 2\pi}.
    \]
    We can see that~\Cref{thm:full_second_order_bounds} gives us a coarser bound, but on the other hand, the bound is defined on a larger radius. In particular we get
    \[
        \norm{\pa{\conn \dif \exp_p}_{rv}(w_1,w_2)} \leq \frac{r}{3}\norm{w_1}\norm{w_2} \qquad{r < 2\sqrt{2}\pi}.
    \]
    This radius is tight, as there are points that are $2\sqrt{2}\pi$ apart of any given point which are conjugate to it. This can be seen by computing the eigenvalues of the differential of the exponential map (see for example~\cite[][Theorem $D.2$]{lezcano2019cheap}).

    Better bounds are possible by using a tighter version of the bounds on the curvature tensor at the expense of having an uglier numeric constant. For example, the $\frac{1}{3}$ constant can be improved this way to $\frac{\sqrt{61}}{36}$.
    \end{example}

    These same ideas can be generalized to symmetric spaces. We will use this to compute bounds for the Hessian of the exponential map in the Grassmannian.

    \begin{example}[Sectional Curvature of a Symmetric space]
        Let $G/H$ be a symmetric space. Since a symmetric space is a Riemannian homogeneous space, we just need to bound the sectional point at one point, as the space has the same sectional curvature at every point. Denote by $\mlie = \normal{\hlie}\subset \glie$ the orthogonal complement of the Lie algebra of $H$ with respect to the metric at the identity in $G$. This complement to $\hlie$ is not a Lie algebra itself and in fact $[\mlie, \mlie] \subset \hlie$, as $G/H$ is a symmetric space. This set $\mlie$ may be identified isometrically with the tangent space at $\pi(e)$, the projection of the identity element $e \in G$, in other words, the map
        \[
        \deffun{\pa{\dif \pi}_e\vert_\mlie : \mlie -> T_{\pi(e)}G/H;}
        \]
        is a linear isometry. Via this identification we may treat vectors $X, Y \in T_{\pi(e)}G/H$ as vectors $\overline{X}, \overline{Y} \in \mlie \subset \glie$. Now, by O'Neill's formula~\parencite[][Chapter 7, Thm 47]{oneill1966fundamental}, the sectional curvature for these manifolds has a particularly simple formula for orthonormal vectors $X, Y \in T_{\pi(e)}G/H$
        \[
            \sec_{G/H}(X, Y) = \sec_G(\overline{X}, \overline{Y}) + \frac{3}{4}\norm{\cor{\overline{X}, \overline{Y}}}^2
        \]
        where we have used that $[\mlie, \mlie]\subset \hlie$.

        If $G$ is a Lie group with a bi-invariant metric we say that $G/H$ is a \emph{normal symmetric space}. For a normal metric, the sectional curvature simplifies to
        \[
            \sec_{G/H}(X, Y) = \norm{\cor{\overline{X}, \overline{Y}}}^2.
        \]
        Trough the study of symmetric spaces of non-compact type and their duality, it is not difficult to prove that the sectional curvature on any symmetric space is either the norm or minus the norm of the Lie bracket of a pair of vectors, although we will not prove that as we will not need it.
    \end{example}

    \begin{example}[The real Grassmannian]
        The real Grassmannian as a symmetric space is given by the quotient $\Gr{n,k} = \SO{n} / \pa{\SO{k} \times \SO{n-k}}$, where the metric on $\SO{n}$ is the bi-invariant metric generated by the scalar product $\scalar{X, Y} = \frac{1}{2}\tr\pa{\trans{X}Y}$ on the Lie algebra. For this symmetric space we have that
        \begin{gather*}
            \hlie =
            \so{k}\oplus\so{n-k} =
            \set[\Big]{
                \begin{pmatrix}
                    B & 0 \\
                    0 & C
                \end{pmatrix}
                | B \in \Skew{k}, C \in \Skew{n-k}}\\
            \mlie = \set[\Big]{
                        \begin{pmatrix}
                            0 & A \\
                            -\trans{A} & 0
                        \end{pmatrix}
                        | A \in \M{n-k, k}}
        \end{gather*}
        Note that the metric is chosen so that for $\overline{X} \in \mlie$ we have that $\norm{\overline{X}} = \norm{X}$, where $X \in \M{n-k,k}$. This is so that this norm agrees with the usual norm in the projective plane as $\Gr{n,1} \iso \RR P^n$.

        Since the metric on $G = \SO{n}$ is bi-invariant, the Grassmannian is a normal symmetric space, so the sectional curvature is given by the norm of the commutator of elements in $\mlie$ . Using the bound on the norm of the Lie bracket for skew-symmetric matrices, we would get
        \[
            0 \leq \sec(X, Y) \leq 4.
        \]
        As it can be seen by~\parencite[][Lemma 2.5]{ge2014ddvv}, these bounds are not tight. They can be refined as announced in~\parencite[][Theorem 3a]{wong1968sectional} and proved in~\parencite[][p.292]{hildebrandt1980harmonic} via a simple application of Cauchy-Schwarz as
        \[
            0 \leq \sec(X, Y) \leq 2.
        \]
        Using these bounds, we get analogous bounds for the Hessian of the exponential map for the Grassmannian,
        \begin{alignat*}{2}
            0
            \leq
            \scalar{\pa{\conn\dif \exp_p}_{rv}\pa{w,w}, \dgamma(r)}
            &\leq
            \pa[\Big]{\frac{1}{r} - \frac{\sin\pa{2\sqrt{2}r}}{2\sqrt{2}r^2}}\norm{w}^2
            &&\qquad r < \frac{\pi}{\sqrt{2}}\\
            \norm{\normal{\pa{\conn\dif \exp_p}_{rv}\pa{w,w}}}
            &\leq
            \frac{8r}{9}\norm{w}^2
            &&\qquad r < \pi.
        \end{alignat*}
        Note that for the real Grassmannian $\inj = \frac{\pi}{2}$~\parencite{kozlov2000geometry}, so the radii of these equations should be enough for any practical purposes. As in the case of $\SO{n}$, it is also direct to get linear bounds on the full Hessian.
    \end{example}

\section{Convergence rates of dynamic trivializations}\label{sec:weakly_convex}
    In this section, we use the bounds developed in the last two sections to prove convergence of several descent algorithms on Riemannian manifolds.

    \subsection{Weakly Convex Optimization on Manifolds}
    We now have all the necessary tools to be able to complete the program stated in the introduction. In particular, we can prove the convergence of the dynamic trivialization framework for any stopping rule. We first recap the results of the previous two sections in the following proposition.

    \begin{proposition}\label{prop:weak_convexity}
        Let $(M, \gm)$ be a connected and complete Riemannian manifold of $(\delta, \Delta, \Lambda)$-bounded geometry and let $f$ be an $\alpha$-weakly convex function on it. Fix a point $p \in M$, and let $\mathcal{X} \subset \seg$ be a subset of $M$ with $\diam(\mathcal{X}) = r \leq \pi_{\frac{\Delta + \delta}{2}}$, and at least one critical point of $f$ in it. Denote the pullback of $\mathcal{X}$ under the exponential as $\overline{\mathcal{X}} \defi \exp^{-1}_p(\mathcal{X}) \subset T_pM$. Then, the map $\deffun{f \circ \exp_p : \overline{\mathcal{X}} -> \RR;}$ is $\widehat{\alpha}_r$-weakly convex with constant
        \[
            \widehat{\alpha}_r = \alpha (C_{1,r} + C_{2,r})
        \]
        for
        \begin{gather*}
            C_{1,r} = \max\set[\Big]{1, \lfrac{\sn_\delta(r)^2}{r^2}} \\
            C_{2,r} =
                \lfrac{8}{3}\sn_\delta\pa[\big]{\lfrac{r}{2}}^2
                \pa{\Lambda\sn_\delta\pa[\big]{\lfrac{r}{2}}^2
                +2\max\set{\abs{\Delta}, \abs{\delta}}\sn_\delta\pa{r}}.
        \end{gather*}
    \end{proposition}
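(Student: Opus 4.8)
The plan is to estimate the Euclidean Hessian of $f\circ\exp_p$ at each point of $\overline{\mathcal{X}}$ by the chain rule and to match the two resulting terms with $\alpha C_{1,r}$ and $\alpha C_{2,r}$. Viewing $f\circ\exp_p$ as a function on the flat space $T_pM$, its Euclidean Hessian is $\connflat\dif(f\circ\exp_p)$. Since $\overline{\mathcal{X}}\subseteq\tseg$, the exponential is a diffeomorphism onto $\mathcal{X}$ with no conjugate points along the radial geodesics meeting $\overline{\mathcal{X}}$, so $\dif\exp_p$ and $\conn\dif\exp_p$ are defined there, and the chain rule for the Hessian of a composition (the Leibnitz rule for the induced connection recalled before \eqref{eq:conn_minus_conn_flat}, together with $\Hess f=\conn\dif f$) gives, for $v\in\overline{\mathcal{X}}$, $q=\exp_p(v)$ and $w_1,w_2\in T_pM$,
\[
    \pa{\connflat\dif(f\circ\exp_p)}_v(w_1,w_2)
    = \Hess f\vert_q\pa{\pa{\dif\exp_p}_v w_1,\,\pa{\dif\exp_p}_v w_2}
    + \dif f\vert_q\pa{\pa{\conn\dif\exp_p}_v(w_1,w_2)}.
\]
Taking absolute values, applying the triangle inequality and Cauchy--Schwarz, and passing to operator norms,
\[
    \norm{\pa{\connflat\dif(f\circ\exp_p)}_v}
    \leq \norm{\Hess f}\,\norm{\pa{\dif\exp_p}_v}^2
    + \norm{\grad f(q)}\,\norm{\pa{\conn\dif\exp_p}_v}.
\]

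Next I would bound the three factors on the right. The first is at most $\alpha$ by $\alpha$-weak convexity. For the second, write $\ell=\norm v=d(p,q)\leq r$; \Cref{thm:first_order_bounds}, applied to the radial geodesic of length $\ell$ in its conjugate-point-free form (available since $\ell<r_{\conjloc}(v/\ell)$, so $\ell<\pi_\Delta$ is not needed), gives $\norm{\pa{\dif\exp_p}_v}\leq\max\set{1,\sn_\delta(\ell)/\ell}$, and since $\sn_\delta(t)/t$ is $\leq 1$ for $\delta>0$ and non-decreasing for $\delta\leq 0$, squaring and using $\ell\leq r$ bounds this factor by $C_{1,r}$. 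For the third, \Cref{thm:full_second_order_bounds} applies because $r\leq\pi_{\frac{\Delta+\delta}{2}}$ and bounds $\norm{\pa{\conn\dif\exp_p}_v}$ by an expression whose trigonometric factors, uniformized over the distances $\leq r$ occurring in $\overline{\mathcal{X}}$, are controlled by their values at $r$.

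The remaining --- and only genuinely new --- ingredient is the a priori gradient bound $\norm{\grad f(q)}\leq\alpha r$, and this is exactly where the hypothesis that $\mathcal{X}$ contains a critical point $x^\ast$ of $f$ is used: taking a minimizing geodesic $\gamma$ from $x^\ast$ to $q$ (one exists by completeness) and integrating $\conn_{\dgamma}\grad f=\Hess f(\dgamma)$ along it, with $\grad f(x^\ast)=0$ and $\norm{\Hess f}\leq\alpha$, yields $\norm{\grad f(q)}\leq\alpha\,d(x^\ast,q)\leq\alpha\,\diam(\mathcal{X})=\alpha r$. Feeding this and the two exponential-map bounds into the displayed inequality and collecting constants gives $\norm{\pa{\connflat\dif(f\circ\exp_p)}_v}\leq\alpha(C_{1,r}+C_{2,r})$ uniformly on $\overline{\mathcal{X}}$, i.e.\ the claimed $\widehat{\alpha}_r$-weak convexity. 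The heavy lifting has already been done in \Cref{thm:first_order_bounds} and \Cref{thm:full_second_order_bounds}, so I do not anticipate a serious obstacle; the one place that needs care is the bookkeeping that converts the radius-dependent estimates for $\dif\exp_p$ and $\conn\dif\exp_p$ into the closed-form constants $C_{1,r}$ and $C_{2,r}$ after uniformizing over $\overline{\mathcal{X}}$, together with the a priori gradient bound, which is the substantive new step.
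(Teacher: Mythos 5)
Your decomposition and the way you feed \Cref{thm:first_order_bounds} and \Cref{thm:full_second_order_bounds} into it are exactly the paper's proof: the Leibnitz rule splits $\conn\dif(f\circ\exp_p)$ into $\conn\dif f\circ\dif\exp_p$ plus $\dif f\circ\conn\dif\exp_p$, the first term is controlled by $\alpha$ times the square of Rauch's upper bound (your monotonicity argument for why $\max\{1,\sn_\delta(\ell)/\ell\}^2\le C_{1,r}$ for $\ell\le r$ is the right one), and the second by a Lipschitz bound on $f$ obtained from the critical point, multiplied by the full Hessian bound.

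The one place where you and the paper part ways is that Lipschitz constant, and it matters because your (correct) version does not reproduce the stated $C_{2,r}$. Integrating $\Hess f$ along a minimizing geodesic from the critical point gives, as you say, $\norm{\grad f(q)}\le\alpha\, d(x^\ast,q)\le\alpha r$. Multiplying this by the bound of \Cref{thm:full_second_order_bounds}, whose value at radius $r$ is $\lfrac{8}{3r^2}\sn_\delta\pa[\big]{\lfrac{r}{2}}^2\pa{\cdots}=C_{2,r}/r^2$, yields $\alpha r\cdot C_{2,r}/r^2=\alpha C_{2,r}/r$, not $\alpha C_{2,r}$: your final ``collecting constants'' sentence silently drops a factor of $r$. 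The paper's own proof instead asserts that $f$ is ``$\alpha r^2$-Lipschitz'' on $\mathcal{X}$, which is what one needs to land literally on $C_{2,r}$, but which does not follow from the hypotheses --- the bound that follows is $\alpha r$, exactly as you derived. So your argument actually establishes the proposition with $C_{2,r}$ replaced by $C_{2,r}/r$, a sharper constant when $r>1$ and a weaker one when $r<1$; as written it implies the stated inequality only in the regime $r\ge 1$. You should either state the constant you actually obtain or flag explicitly where the extra factor of $r$ would have to come from.
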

    \begin{proof}
        We will bound the Hessian of the map $f \circ \exp_p$ for an arbitrary $p \in M$. By the Leibnitz rule, we have that
        \[
            \conn\dif\pa{f \circ \exp_p}
            = \conn \pa{\dif f \circ \dif \exp_p}
            = \conn\dif f \circ \dif\exp_p +  \dif f \circ \conn\dif\exp_p.
        \]
        Taking norms, and since the left-hand side is a symmetric tensor, its maximum value---\ie, its norm---it's reached at a singular vector. As such, writing the bound explicitly
        \begin{align*}
            \norm{\conn\dif\pa{&f \circ \exp_p}}_{\mathcal{X}}=\\
            & \max_{\substack{v \in \exp^{-1}(\mathcal{X})\\w \in T_pM,\ \norm{w} = 1}}
            \cor[\Big]{ \pa{\conn\dif f}_{\exp_p(v)}\pa{\pa{\dif \exp_p}_v(w), \pa{\dif \exp_p}_v(w)}
            +  \pa{\dif f}_{\exp_p(v)}\pa{ \pa{\conn\dif\exp_p}_{v}(w,w)}}.
        \end{align*}
        Since there exists a critical point of $f$ in $\seg$, and by the $\alpha$-convexity, we have that $f$ is $\alpha r^2$-Lipschitz on $\mathcal{X}$. Using this, the triangle inequality and Cauchy-Schwarz, we can bound this quantity as
        \[
            \norm{\conn\dif\pa{f \circ \exp_p}}_{\mathcal{X}}=
            \alpha
            \max_{\substack{v \in \exp^{-1}(\mathcal{X})\\w \in T_pM,\ \norm{w} = 1}}
            \norm{\pa{\dif\exp_p}_v(w)}^2
            +
            \alpha r^2
            \max_{\substack{v \in \exp^{-1}(\mathcal{X})\\w \in T_pM,\ \norm{w} = 1}}
            \norm{\pa{\conn\dif\exp_p}_v(w, w)}.
        \]
        From~\Cref{thm:first_order_bounds,thm:full_second_order_bounds}, we have that $C_{1,r}, C_{2,r}$ are bounds for the square of the norm of the differential and the norm Hessian of the Hessian of the exponential respectively.
    \end{proof}

    After the heavy work of giving bounds on the Hessian of the pullback of a function along the exponential map, we are in a position to prove convergence rates for different instances of the dynamic trivialization framework in terms of $\widehat{\alpha}_r$. We start with one of the simplest ones, namely \emph{static trivializations}. This is the algorithm that comes from choosing $\code{stop} \equiv \code{False}$ in~\Cref{alg:dyn_triv}. Equivalently, this is the algorithm coming from solving
    \[
        \min_{v \in T_pM} f\pa{\exp_p(v)}
    \]
    using gradient descent on $T_pM$.

    The critical assumption in this result is that iterates remain bounded inside a compact set $\mathcal{X} \subset T_pM$. This assumption is restrictive, but it is standard in previous work~\parencite{bonnabel2013stochastic,zhang2016riemannian,sato2019riemannian,tripuraneni2018averaging,ahn2020nesterov}.

    \begin{theorem}[Convergence of static trivializations]\label{thm:static_trivializations}
        Let $(M, \gm)$ be a connected and complete Riemannian manifold of $(\delta, \Delta, \Lambda)$-bounded geometry and let $f$ be an $\alpha$-weakly convex function on it. Fix a point $p \in M$, and let $\mathcal{X} \subset \seg$ be a subset of $M$ with $\diam(\mathcal{X}) \leq r$, and at least one critical point of $f$ in it. Consider~\Cref{alg:dyn_triv} with the stopping rule $\code{stop} \equiv \code{False}$ and fixed step-size $\eta_{i,k} = \lfrac{1}{\widehat{\alpha}_r}$ where $\widehat{\alpha}_r$ is as in~\Cref{prop:weak_convexity}. If all the iterates of the method stay in $\mathcal{X}$, the method will find a point $v_{0, t} \in T_pM$ such that $\norm{\grad \pa{f \circ \exp_p}(v_{0,t})} < \epsilon$ in at most
        \[
            t \leq \ceil[\Big]{\lfrac{2\widehat{\alpha}_r}{\epsilon^2}\pa{f(\exp_p(v_{0,0})) - f^\ast}}
        \]
        steps, where $f^\ast$ is a lower-bound of $f$ on $\mathcal{X}$.
    \end{theorem}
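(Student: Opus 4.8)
The plan is to observe that, with the stopping rule $\code{stop}\equiv\code{False}$, \Cref{alg:dyn_triv} is exactly plain Euclidean gradient descent with constant step $\eta = 1/\widehat{\alpha}_r$ applied to $g \defi f\circ\exp_p\colon T_pM\to\RR$ from the starting iterate $v_{0,0}$, and then to run the textbook analysis of gradient descent for a function with Lipschitz gradient, feeding in the Hessian bound of \Cref{prop:weak_convexity}.

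Concretely, I would first record that by \Cref{prop:weak_convexity} the pullback $g=f\circ\exp_p$ is $\widehat{\alpha}_r$-weakly convex on $\overline{\mathcal X}=\exp_p^{-1}(\mathcal X)$, so $\grad g$ is $\widehat{\alpha}_r$-Lipschitz there and the descent inequality
\[
    g(u)\le g(v)+\scalar{\grad g(v),\,u-v}+\frac{\widehat{\alpha}_r}{2}\norm{u-v}^2
\]
holds for $u,v$ in the relevant region. Plugging the update $v_{0,k+1}=v_{0,k}-\eta\,\grad g(v_{0,k})$ with $\eta=1/\widehat{\alpha}_r$ into this inequality gives the per-step decrease $g(v_{0,k+1})\le g(v_{0,k})-\frac{1}{2\widehat{\alpha}_r}\norm{\grad g(v_{0,k})}^2$. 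Summing this telescoping bound over $k=0,\dots,t-1$ and using that the iterates stay in $\mathcal X$ (so $g(v_{0,t})=f(\exp_p(v_{0,t}))\ge f^\ast$) yields
\[
    \frac{1}{2\widehat{\alpha}_r}\sum_{k=0}^{t-1}\norm{\grad g(v_{0,k})}^2\le g(v_{0,0})-f^\ast = f(\exp_p(v_{0,0}))-f^\ast,
\]
hence $\min_{0\le k<t}\norm{\grad g(v_{0,k})}^2\le \frac{2\widehat{\alpha}_r}{t}\pa{f(\exp_p(v_{0,0}))-f^\ast}$. Therefore once $t$ reaches the stated ceiling, some iterate $v_{0,k}$ with $k<t$ has $\norm{\grad g(v_{0,k})}<\epsilon$, which is the claim (the strict-versus-nonstrict margin is cosmetic and absorbed into the ceiling).

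The one step that genuinely needs care is the validity of the descent inequality along the Euclidean segment $[v_{0,k},v_{0,k+1}]\subset T_pM$: it requires the Hessian bound of \Cref{prop:weak_convexity} to hold on a convex set containing all the iterates together with the segments joining consecutive ones. This is exactly the role of the standing hypothesis that the iterates remain in $\mathcal X$. Indeed, since the first- and second-order bounds of \Cref{thm:first_order_bounds,thm:full_second_order_bounds} on $\dif\exp_p$ and $\conn\dif\exp_p$ at $sv$ are monotone in $s=\norm{sv}$, the estimate $\norm{\Hess g}\le\widehat{\alpha}_r$ in fact holds on the whole Euclidean ball of radius $r$ about the origin of $T_pM$ (granted $r<\pi_{\frac{\Delta+\delta}{2}}$ and the Lipschitz control of $f$ coming from the critical-point assumption), which is convex; keeping the iterates in $\mathcal X$ keeps the endpoints of every segment inside this ball, so the descent inequality applies at each step. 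Everything else is the bookkeeping above, which is routine.
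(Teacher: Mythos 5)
Your proposal is correct and follows essentially the same route as the paper: \Cref{prop:weak_convexity} gives the $\widehat{\alpha}_r$-weak convexity of $f\circ\exp_p$, after which the problem reduces to standard Euclidean gradient descent with an $L$-smooth objective, for which the paper simply cites the textbook rate while you write out the descent lemma and telescoping sum explicitly. Your closing remark about needing the Hessian bound on a convex set containing the segments between consecutive iterates is a genuine point of care that the paper's one-line proof glosses over, and your resolution via the Euclidean ball of radius $r$ is the natural way to close it.
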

    \begin{proof}
        We proved in~\Cref{prop:weak_convexity} that the map $f \circ \exp_p$ is $\widehat{\alpha}_r$-weakly convex on $\mathcal{X}$. This can be regarded as a function on a Euclidean space, for which the convergence rate is well known (see for example~\parencite{nesterov2004introductory}).
    \end{proof}

    As we can see, the point $p \in M$ does not play any role in the proof of~\Cref{thm:static_trivializations}, besides for the technical condition of the iterates being bounded in $\seg$. Generalizing this assumption, we can prove at once the convergence of the scheme of dynamic trivializations, for an arbitrary stopping rule.

    \begin{theorem}[Convergence of dynamic trivializations]\label{thm:dynamic_trivializations}
        Let $(M, \gm)$ be a connected and complete Riemannian manifold of $(\delta, \Delta, \Lambda)$-bounded geometry and let $f$ be an $\alpha$-weakly convex function on it. Assume that in the algorithm~\Cref{alg:dyn_triv} with an arbitrary stopping rule $\code{stop}$, all the iterates $v_{i,k}$ are contained in sets $\mathcal{X}_i \subset T_{p_i}M$ with $\diam\pa{\mathcal{X}_i} \leq r$, with at least one critical point of $f$ in each of them. Then, for the choice $\eta_{i,k} = \lfrac{1}{\widehat{\alpha}_r}$, the algorithm will find a point $v_{i, k} \in T_pM$ such that $\norm{\grad \pa{f \circ \exp_{p_i}}(v_{i,k})} < \epsilon$ in at most
        \[
            \ceil[\Big]{\lfrac{2\widehat{\alpha}_r}{\epsilon^2}\pa{f(\exp_p(v_{0,0})) - f^\ast}}
        \]
        steps, where $f^\ast$ is a lower-bound of $f$ on $\mathcal{X}$.
    \end{theorem}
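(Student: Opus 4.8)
The plan is to run the argument of~\Cref{thm:static_trivializations} phase by phase and glue the phases together using the fact that the objective value does not jump when the pullback point is updated. First I would note that the inner loop of the $i$-th outer iteration is exactly Euclidean gradient descent applied to $f \circ \exp_{p_i}$, and that~\Cref{prop:weak_convexity} applies within this phase: by hypothesis $\mathcal{X}_i$ has diameter at most $r \leq \pi_{\frac{\Delta+\delta}{2}}$, lies in the normal neighbourhood of $p_i$, and contains a critical point of $f$. Hence $f \circ \exp_{p_i}$ is $\widehat{\alpha}_r$-weakly convex on $\mathcal{X}_i$, so it has Hessian bounded by $\widehat{\alpha}_r$ in operator norm there and is therefore $\widehat{\alpha}_r$-smooth. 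The standard Euclidean descent lemma, applied exactly as in~\Cref{thm:static_trivializations}, then yields for each inner step with $\eta_{i,k} = \lfrac{1}{\widehat{\alpha}_r}$
\[
    f\pa{\exp_{p_i}(v_{i,k})} \leq f\pa{\exp_{p_i}(v_{i,k-1})} - \frac{1}{2\widehat{\alpha}_r}\norm{\grad\pa{f \circ \exp_{p_i}}(v_{i,k-1})}^2 .
\]

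Next I would handle the transition between phases. When the inner loop of phase $i$ terminates at index $k_i$, the algorithm sets $p_{i+1} = \exp_{p_i}(v_{i,k_i})$ and $v_{i+1,0} = 0$, so that
\[
    f\pa{\exp_{p_{i+1}}(v_{i+1,0})} = f(p_{i+1}) = f\pa{\exp_{p_i}(v_{i,k_i})} ,
\]
i.e.\ no function value is lost at the outer update. Consequently, the real sequence obtained by evaluating $f$ at the successive iterates pushed to $M$ is non-increasing over the \emph{entire} run---decreasing along the inner loops by the displayed inequality and constant across the outer updates by the equality above. Relabelling all iterates of the whole run as a single sequence $x_0 = \exp_p(v_{0,0}), x_1, x_2, \dots$ with associated pullback gradients $g_0, g_1, \dots$, telescoping the per-step decrease over the first $t$ steps gives
\[
    \frac{1}{2\widehat{\alpha}_r}\sum_{j=0}^{t-1}\norm{g_j}^2 \leq f(x_0) - f(x_t) \leq f(x_0) - f^\ast .
\]
Finally, if $\norm{g_j} \geq \epsilon$ for all $j = 0, \dots, t-1$, the left-hand side is at least $\lfrac{t\epsilon^2}{(2\widehat{\alpha}_r)}$, forcing $t \leq \lfrac{2\widehat{\alpha}_r}{\epsilon^2}\pa{f(x_0) - f^\ast}$; hence some iterate among the first $\ceil{\lfrac{2\widehat{\alpha}_r}{\epsilon^2}(f(x_0) - f^\ast)}$ satisfies $\norm{g_j} < \epsilon$, which is the claim.

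The only delicate point---and the main obstacle compared to the static case---is the bookkeeping that lets the phases be chained. One must check that the single step-size $\eta_{i,k} = \lfrac{1}{\widehat{\alpha}_r}$ is simultaneously admissible in every phase, which holds because $\widehat{\alpha}_r$ depends only on $r$ and the uniform curvature data $\delta, \Delta, \Lambda$ and not on the pullback point $p_i$; that the Euclidean descent inequality is legitimately invoked on each $\mathcal{X}_i$, which follows from the standing assumption that every iterate $v_{i,k}$ remains in $\mathcal{X}_i$ together with the $\widehat{\alpha}_r$-smoothness there, exactly as in~\Cref{thm:static_trivializations}; and that the objective is continuous through the outer updates, which is the equality displayed above. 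Everything else is the routine Euclidean analysis of gradient descent for smooth, possibly nonconvex, functions.
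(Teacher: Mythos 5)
Your proposal is correct and follows essentially the same route as the paper, which simply declares the result ``analogous to Theorem~\ref{thm:static_trivializations}, as the proof does not depend on the pullback point $p$.'' Your write-up usefully makes explicit the two facts the paper leaves implicit---that $\widehat{\alpha}_r$ is uniform over the pullback points $p_i$ and that the objective value is preserved across the outer update since $f(\exp_{p_{i+1}}(0)) = f(\exp_{p_i}(v_{i,k_i}))$---so the telescoping argument chains across phases exactly as in the static case.
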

    \begin{proof}
        Analogous to~\Cref{thm:static_trivializations}, as the proof does not depend on the pullback point $p$.
    \end{proof}

    This theorem should can be considered as a building block to then be used in specific examples to get actual convergence rates under weaker assumptions. In plain words, this result asserts that if the algorithm is converging to a critical point, it is fine to change the trivialization point, as the exponential map will not distort the metric too much.

    \begin{remark}[Practical considerations]
    Of course, these bounds are worst-case bounds, but one can do better in practice. As we have bounds for the distortion of the exponential map at every point, we can choose a dynamic step-length that accounts for this. For example, at a point with $\norm{v_{i,k}} = s$, we could consider
    \[
        \eta_{i,k} = \frac{1}{\widehat{\alpha}_s}.
    \]
    In practice, this is not likely to give any improvement in a real-world problem, as the weak-convexity constant of a function is itself an upper-bound on the Hessian of the function itself, and the exponential map is not a well-behaved map, as we have shown.

    This is particularly true when using a dynamic stopping rule that controls how much the norm of the gradient of the pullback deviates from the norm of the gradient of the function. In this case, one could consider having a rule similar to the one that we outlined in the introduction, but that does not only account for the case in which the algorithm is converging to the cut locus, which will mostly happen if the sectional curvature is positive, but also accounts for the case when the gradient of the pullback is much larger than that of the function, which might happen in cases of negative curvature
    \[
        \code{stop}
        \equiv
        \pa[\Big]{\frac{\norm{\grad\pa{f \circ \exp_{p_i}}\pa{v_{i, k}}}}{\norm{\grad f\pa{x_{i,k}}}} < \epsilon}
        \ 
        \code{or}
        \ 
        \pa[\Big]{\frac{\norm{\grad\pa{f \circ \exp_{p_i}}\pa{v_{i, k}}}}{\norm{\grad f\pa{x_{i,k}}}} > \frac{1}{\epsilon}}
    \]
    This rule can be read as \emph{we change the trivialization point when we detect that $\exp_p$ has deviated too much from being an isometry in the direction of the gradient}. Of course, this rule can be adapted using the information that one has a priori about the geometry of the manifold and, in particular, its cut locus, conjugate locus or injectivity radius.
    \end{remark}

\section*{Acknowledgements}

We would like to thank Jaime Mendizabal and Prof.\ Vidit Nanda for checking the proofs and giving very useful feedback on early versions of the work. We would also like to thank Prof.\ Andrew Dancer, Prof.\ Raphael Hauser and Prof.\ Coralia Cartis for useful feedback and pointers in the early stages of this project and encouragement to follow the line of work that led to this final version.

This work was supported by an Oxford-James Martin Graduate Scholarship.

\clearpage
\printbibliography
\end{document}